\newtheorem {theorem}    {Theorem}[section]
\newtheorem {lemma}      [theorem]    {Lemma}
\newtheorem {corollary}  [theorem]    {Corollary}
\newtheorem {proposition}[theorem]    {Proposition}
\renewcommand{\rm}{\mathrm}
\theoremstyle{definition}
\numberwithin{equation}{section}
\begin{document}

\title{Remarks on the theta correspondence over finite fields}

\date{\today}

\author[Dongwen Liu]{Dongwen Liu}

\address{School of Mathematical Science, Zhejiang University, Hangzhou 310027, Zhejiang, P.R. China}

\email{maliu@zju.edu.cn}

\author[Zhicheng Wang]{Zhicheng Wang}

\address{School of Mathematical Science, Zhejiang University, Hangzhou 310027, Zhejiang, P.R. China}

\email{11735009@zju.edu.cn}

\subjclass[2010]{Primary 20C33; Secondary 22E50}

\begin{abstract}
S.-Y. Pan decomposed the uniform projection of the Weil representation of a finite symplectic-odd orthogonal dual pair, in terms of Deligne-Lusztig virtual characters, assuming that the order of the finite field is large enough. In this paper we use Pan's decomposition to study the theta correspondence for this kind of dual pairs, following the approach of Adams-Moy and Aubert-Michel-Rouquier. Our results give the theta correspondence between unipotent representations and certain quadratic unipotent representations.
\end{abstract}

\maketitle

\section{Introduction}

The theory of Weil representations and theta correspondence over finite fields has been studied extensively (see  e.g. \cite{AM, AKP, AMR, E, G, GH, H, P2, P3, P4, S1, S2}) and has many applications to the representation theory of $p$-adic groups, but there are still a few important open problems unsolved yet  in the finite field case. In particular, one has a complete understanding for the theta correspondence of unipotent representations for finite reductive dual pairs, with the only exception of symplectic-odd orthogonal dual pairs. The aim of this paper is to fill this gap and give some answers for  this missing case. In contrast to other dual pairs, it is well-known that the theta lifting of unipotent representations may not be unipotent in general for symplectic-odd orthogonal dual pairs. It turns out that new phenomena occurs  and the correspondence  involves certain quadratic unipotent representations, which are called $\theta$-representations in this paper.

To give some details, let us start from the Deligne-Lusztig characters \cite{DL}, which is a major tool for the study of representations of finite Lie groups. Let $\textbf{F}$ be a finite field of $q$ elements, $\textbf{G}$ be a connected reductive group defined over ${\bf F}$, and $F$ be the corresponding Frobenius endomorphism. Let $G = \textbf{G}^{F}$ be the group of rational points of $\textbf{G}$. For an $F$-stable maximal torus $\textbf{T}$ of ${\bf G}$ and a  character $\theta$ of $T = \textbf{T}^{F}$, let $R^{G}_{T,\theta}$ denote the virtual character of $G$ defined by P. Deligne and G. Lusztig in the seminal work \cite{DL}.

\par Let $\textbf{T}_{0}$ be a maximally split $F$-stable maximal torus of $\textbf{G}$, and $W_{\textbf{G}} = \textbf{N}(\textbf{T}_{0})/\textbf{T}_{0}$ be the Weyl group of $\textbf{G}$. For each $w \in W_{\textbf{G}}$, one can associate an $F$-stable maximal torus ${\bf T}_w$ of ${\bf G}$ and a specific character of $T_w={\bf T}_w^F$ of order 2, denoted by $\theta_{w}$ \cite{P2}. If $\textbf{G}= {\bf SO}_{2n+1}$ where $n$ is a positive integer, then
\[
\frac{1}{|W_{SO_{2n+1}}|}\sum_{w\in W_{SO_{2n+1}}}R^{SO_{2n+1}}_{T_{w},\theta_{w}}
\]
is the unique linear character of $SO_{2n+1}(q)$ of order 2, which will be denoted by $\chi_{SO_{2n+1}}$. For $n=0$, by convention $\chi_{SO_{2n+1}}$ is the trivial character. An irreducible representation $\pi$ of $G$ is called  a {\it $\theta$-representation} if it occurs in some $R^{G}_{T_{w},\theta_{w}}$. The $\theta$-representations can be interpreted as certain quadratic unipotent representations of finite reductive Lie groups (see e.g. \cite{M} for the notion of quadratic unipotent representations for $p$-adic groups). For $SO_{2n+1}$, $\pi \to \chi_{SO_{2n+1}}\pi$ gives a bijection between unipotent representations and $\theta$-representations.

\par Let $\omega_{Sp_{2n}}$ be the Weil representation of the finite symplectic group $Sp_{2n}(q)$ with $q$ odd, which depends on a choice of a nontrivial additive character $\psi$ of $\textbf{F}$ (cf. \cite{G}). We will denote by $\omega_{Sp_{2n}}^{\#}$ the uniform projection of $\omega_{Sp_{2n}}$, i.e. the projection onto the subspace of virtual characters spanned by all the Deligne-Lusztig characters. A pair $(G,G')$ of mutually centralized reductive subgroups of $Sp_{2n}$ is called a reductive dual pair. Then the Weil representation $\omega_{Sp_{2n}}$ restricts to a representation $\omega_{G,G'}$ of $G\times G'$, whose character will be denoted by the same notation.

\par In \cite{AM}, J. Adams and A. Moy show that for unitary or symplectic-even orthogonal dual pairs, if $\pi\otimes\pi'$ occurs in $\omega_{G,G'}$, then $\pi$ is unipotent if and only if $\pi'$ is unipotent, in which case they are in correspondence under the decomposition of the uniform projection $\omega_{G,G'}^\#$. They also study the first occurrence of cuspidal representations and give the explicit correspondence of cuspidal unipotent representations. 

As we mentioned above, it is known that for symplectic-odd orthogonal dual pairs, theta correspondence no longer  preserves unipotent representations in general. On the other hand in \cite{P2} S.-Y. Pan gives the decomposition of $\omega_{G,G'}^{\#}$ for such dual pairs. It turns out that based on Pan's work we can obtain the following analog of the result in \cite{AM}, which fills the gap in the symplectic-odd orthogonal case. See Theorem \ref{thm3.7} for details, and  also Theorem \ref{uni} for a general statement.

  \begin{theorem} \label{thm1.1}
Assume that $q$ is odd and large enough such that the main result in \cite{S2} holds. Let $\lambda_k$ be the unique cuspidal unipotent representation of $Sp_{2n}$, $n=k(k+1)$. Then its theta lifting to $O^\epsilon_{2n+1}$, $\epsilon=\pm$, is the unique cuspidal $\theta$-representation of $O^\epsilon_{2n+1}$ with trivial central character, and is the first occurrence in each Witt tower of odd orthogonal groups.
\end{theorem}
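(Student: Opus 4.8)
The plan is to run the strategy of \cite{AM} and \cite{AMR} on Pan's decomposition of $\omega^\#_{Sp_{2n},O^\epsilon_{2n+1}}$, the cuspidal unipotent representation being favourable because its Harish--Chandra support is a single point. First I would fix the candidate lift. By Lusztig's classification of cuspidal unipotent representations, $n=k(k+1)$ is precisely the condition under which $Sp_{2n}$ (type $C_n$) admits a cuspidal unipotent representation, and it is then unique; the identical condition and uniqueness hold for $SO_{2n+1}$ (type $B_n$), whose cuspidal unipotent I denote $\mu_k$. Since $\chi_{SO_{2n+1}}$ is a linear character, $\chi_{SO_{2n+1}}\mu_k$ is again cuspidal, and by the twisting formula $R^{SO_{2n+1}}_{T_w,\theta_w}=\chi_{SO_{2n+1}}\cdot R^{SO_{2n+1}}_{T_w,1}$ — which is exactly what makes $\chi_{SO_{2n+1}}=\frac{1}{|W_{SO_{2n+1}}|}\sum_w R^{SO_{2n+1}}_{T_w,\theta_w}$ — it occurs in $R^{SO_{2n+1}}_{T_w,\theta_w}$ whenever $\mu_k$ occurs in $R^{SO_{2n+1}}_{T_w,1}$, so $\chi_{SO_{2n+1}}\mu_k$ is a $\theta$-representation. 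Using $O_{2n+1}=SO_{2n+1}\times\{\pm I\}$, it has two extensions to $O^\epsilon_{2n+1}$ differing by the value of the central character at $-I$, and the one with trivial central character is the representation $\tau$ of the theorem; it is the unique cuspidal $\theta$-representation of $O^\epsilon_{2n+1}$ with trivial central character, since cuspidality descends to $SO_{2n+1}$ where $\chi_{SO_{2n+1}}\mu_k$ is the only cuspidal $\theta$-representation. For the computation I would record, via Lusztig's Fourier transform attached to the cuspidal family, the explicit uniform projections $\lambda_k^\#=\sum_w a_w R^{Sp_{2n}}_{T_w,1}$ and $\tau^\#=\chi_{SO_{2n+1}}\mu_k^\#=\sum_w a_w R^{SO_{2n+1}}_{T_w,\theta_w}$, with identical coefficients $a_w$ under the identification $W_{SO_{2n+1}}=W_{Sp_{2n}}$.

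Next I would compute and identify the lift. Substituting these into Pan's formula for $\omega^\#_{Sp_{2n},O^\epsilon_{2n+1}}$ and applying the orthogonality relations for Deligne--Lusztig characters, one evaluates $\langle\omega^\#_{Sp_{2n},O^\epsilon_{2n+1}},\lambda_k^\#\otimes\sigma^\#\rangle$ for each irreducible $\sigma$ of $O^\epsilon_{2n+1}$: pairing with $\lambda_k^\#$ selects the block of Deligne--Lusztig characters carried by the cuspidal-type data, and on the orthogonal side these have the form $R^{SO}_{T_w,\theta_w}$ rather than $R^{SO}_{T_w,1}$ — this is precisely the quadratic twist that makes the lift non-unipotent — so that among all $\sigma$ only $\tau^\#$ survives, with the correct central character. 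Granting that $\lambda_k$ has a nonzero theta lift to $O^\epsilon_{2n+1}$ (from the tower property together with this nonvanishing), and that every constituent of the lift is cuspidal, hence a cuspidal $\theta$-representation with trivial central character, hence equal to $\tau$, a degree count together with Pan's finer description of $\omega_{G,G'}$ gives that the lift is exactly $\tau$; running the argument with $-\epsilon$ (or invoking the conservation relation) handles the other Witt tower.

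It remains to prove the first-occurrence assertion, which is linked to the cuspidality of the lift used just above: if $\lambda_k\otimes\sigma$ occurred in $\omega_{Sp_{2n},O^\epsilon_{2n+1}}$ with $\sigma$ non-cuspidal, then embedding $\sigma$ into a representation parabolically induced from a proper Levi of $O^\epsilon_{2n+1}$ and applying the compatibility of theta lifting with parabolic induction and Jacquet restriction (the finite-field analogue of Kudla's filtration) would force $\lambda_k$ to occur already in $\omega_{Sp_{2n},O^\epsilon_{2m+1}}$ for some $m<n$; so it suffices to rule this out, i.e.\ to show $2n+1$ is the first occurrence of $\lambda_k$ in each tower. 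I would get this from the fact that once $2m+1<2n+1$ every irreducible constituent of $\omega_{Sp_{2n},O^\epsilon_{2m+1}}$, viewed as a representation of $Sp_{2n}$, is a constituent of a representation induced from a proper parabolic subgroup, hence cannot be the cuspidal $\lambda_k$.

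The main obstacle throughout is that $\lambda_k$ and $\tau$ are not uniform, so Pan's decomposition controls only the projections to the space of uniform class functions of the characters in play; upgrading ``$\langle\omega^\#_{Sp_{2n},O^\epsilon_{2n+1}},\lambda_k^\#\otimes\tau^\#\rangle\ne 0$ and orthogonality to the remaining $\sigma$'' to ``$\lambda_k\otimes\tau$ genuinely occurs, with multiplicity one, and nothing else does'' is exactly where the cuspidality of $\lambda_k$ must be used essentially: a cuspidal representation is detected on the regular semisimple set, so Harish--Chandra restriction removes the non-uniform ambiguity, and this is to be combined with what Pan establishes about $\omega_{G,G'}$ beyond its uniform projection.
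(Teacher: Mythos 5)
Your overall architecture (identify the candidate lift $\lambda'^+_{k,\chi}$, use Pan's uniform decomposition to constrain which $\sigma$ can pair with $\lambda_k$, then settle cuspidality and the two towers) is the right one, but two of your load-bearing steps fail. The most serious is your argument for the first-occurrence lower bound. You claim that for $2m+1<2n+1$ every constituent of $\omega_{Sp_{2n},O^\epsilon_{2m+1}}|_{Sp_{2n}}$ is induced from a proper parabolic and hence cannot be cuspidal. This is false: cuspidal representations of $Sp_{2n}$ do occur in $\omega^\epsilon_{n,m}$ with $m<n$ --- for instance the cuspidal $\theta$-representations $\lambda_{k,i}$ of $Sp_{2k^2}$ have first occurrence index $k(k-1)<k^2$ in one of the odd orthogonal towers (Theorem \ref{spthetalift}; already $\lambda_{1,\alpha}$ of $Sp_2$ occurs with $O^{\epsilon_0}_1$). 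What actually forces $n'\geq n$ is not cuspidality but unipotence: since $\lambda_k$ is unipotent, $\lambda_k\otimes\sigma$ lies in a geometric conjugacy class with $s_{Sp_{2n}^*}=1$, its uniform projection is nonzero, and inspection of Pan's formula shows the only summands whose symplectic factor $R^{Sp_{2n}}_{T_v\times T_w,\theta\otimes\theta_w}$ is geometrically unipotent are those with $k$-index equal to $n$, which requires $n'\geq n$ (this is Theorem \ref{uni}(i)). Your ``cuspidal-type block'' heuristic gestures at this but your stated proof of the lower bound is the wrong mechanism.

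The second gap is the upper bound, i.e.\ that $\lambda_k$ genuinely occurs at level $n'=n$. You derive nonvanishing only of the pairing $\bigl(\omega^{\#},\lambda_k^{\#}\otimes\tau^{\#}\bigr)$, and you correctly flag at the end that this does not yield occurrence of $\lambda_k\otimes\tau$ in $\omega$ itself (the uniform projection of $\lambda_k\otimes\tau$ can pair with other members of its geometric conjugacy class); but the fix you sketch (``cuspidal representations are detected on the regular semisimple set'') is not an argument, and Pan's theorem supplies nothing beyond the uniform projection. The occurrence must be imported from elsewhere: every irreducible representation of $Sp_{2n}$ occurs in $\omega^+_{n,n}\oplus\omega^-_{n,n}$ (Proposition \ref{howe}), which combined with the lower bound gives first occurrence exactly at $n$ in some tower $\epsilon_0$; Theorem \ref{FO} then gives cuspidality of the lift, Corollary \ref{ctheta} and Proposition \ref{pi-1} identify it as $\lambda'^+_{k,\chi}$ (the trivial central character also needs an argument --- it follows from $\pi(-I)=\pi'(-I)$ and $\lambda_k(-I)=1$, not from the uniform computation); and the conservation relation $n'^+(\lambda_k)+n'^-(\lambda_k)=2n$ (Proposition \ref{cons}) is then indispensable, not optional as your parenthetical suggests, to conclude that the other tower also has first occurrence at $n$.
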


The proof of this theorem invokes some ingredients, among which we would like to mention the so-called conservation relation (see e.g. \cite{P1, SZ}). Such a relation provides a link between the first occurrence of the theta lifting of a representation towards two different Witt towers. For finite dual pairs it is known only for cuspidal representations, which is sufficient for our purpose.

Applying Lusztig's bijection \cite{L3} we are able to classify the cuspidal $\theta$-representations of symplectic groups (Theorem \ref{sptheta}). Consequently we can switch the roles of symplectic and odd orthogonal groups in Theorem \ref{thm1.1} and obtain an analogous result Theorem \ref{spthetalift}, the proof of which uses the conservation relations in an inductive manner.

The cuspidal representations are the building blocks of the so-called Harish-Chandra series. As a second step, A.-M. Aubert, J. Michel and R. Rouquier \cite[Th\'eor\`eme 3.7]{AMR} study the theta correspondence of Harish-Chandra series of unipotent cuspidal representations for unitary and symplectic-even orthogonal dual pairs. According to Lusztig's bijection and \cite[Theorem 3.10]{P1}, using their results one should be able to handle the theta correspondence of any irreducible representations. Based on Theorem \ref{thm1.1}, we are able to extend their results to the symplectic-odd orthogonal case using similar calculations as in \cite{AMR}. 

Our second main result Theorem \ref{amr2} establishes the theta correspondence between the Harish-Chandra series of cuspidal (quadratic) unipotent  representations of symplectic-odd orthogonal dual pairs.  We briefly describe its content as follows. Denote $T_l$  the split torus over ${\bf F}$ of rank $l$, and let $\theta_l$, $\theta_{k, l}$ and $\theta'_{k,l}$ be the quadratic characters of $T_l$ defined in \S\ref{sec4}.  Let $(G_n, G'_{n'})$ be a dual pair of symplectic and odd orthogonal groups over ${\bf F}$, and $\lambda$ be an irreducible cuspidal representation of $G_n$ such that its theta lifting $\lambda'$ to $G'_{n'}$ is irreducible cuspidal as well.  Let $\gamma$ be an irreducible constituent of the parabolic induction 
$R^{G_m}_{G_n\times T_{m-n}}(\lambda\times\theta)$, where $\theta$ is some quadratic character specified below, and $\Theta_{G'_{m'}}(\gamma)$ be the theta lifting of $\gamma$ to $G'_{m'}$.  Then Theorem \ref{amr2} states that $\Theta_{G'_{m'}}(\gamma)=0$ if $m'<n'$, and is spanned by irreducible constituents of $R^{G'_{m'}}_{G'_{n'}\times T_{m'-n'}}(\lambda'\otimes\theta')$ otherwise, where the pair $(\theta, \theta')$ of quadratic characters is as follows:

\begin{itemize}
\item If $G_n$ is symplectic and $G'_{n'}$ is odd orthogonal, then $(\theta, \theta')=(\theta_{m-n}, {\bf 1})$ or $({\bf 1}, \theta'_{m-n, m'-n'})$;

\item If $G_n$ is odd orthogonal and $G'_{n'}$ is symplectic, then $(\theta, \theta')=({\bf 1}, \theta_{m'-n'})$ or $(\theta_{m-n}, \theta_{m-n, m'-n'})$.
\end{itemize}
We expect that our results may have some applications towards the theta lifting of quadratic unipotent representations of $p$-adic groups.

From Mackey theory one knows that constituents of such Harish-Chandra series can be parametrized by irreducible representations of certain Weyl groups. Thus the theta correspondence reduces to a correspondence of Weyl group representations. In \cite{AMR} explicit formulas have been proved for unitary case and conjectured for symplectic-even orthogonal case, and latter has been proved recently in \cite{P3}. We expect that similar conjectures should also hold for symplectic-odd orthogonal case. We also remark that from the explicit correspondence one can observe that the conservation relations in general do not hold for noncuspidal representations of finite dual pairs. See \cite[Section 9]{P4} for more results on the (non-)conservation relations.

The paper is organized as follows. In \S\ref{sec2} we briefly recall the theory of Deligne-Lusztig characters as well as Pan's decomposition formula \cite{P2}. In \S\ref{sec3} we prove the theta correspondence between cuspidal unipotent representations and cuspidal $\theta$-representations, following \cite{AM} and using the conservation relations. Based on this result, in \S\ref{sec4} we establish the theta correspondence of certain Harish-Chandra series, using some calculations of Jacquet and induction functors as in \cite{AMR} and \cite{MVW}. We explain briefly the correspondence of Weyl group representations mentioned above at the end of \S\ref{sec4}.

Notations: In this paper the cardinality $q$ of ${\bf F}$ is assumed to be odd and large enough (as in Theorem \ref{pan}). The finite Lie group $G={\bf G}^F$ is also written as $G(q)$. Representations are realized on vector spaces over $\overline{\bf Q}_\ell$ where $\ell\neq p$ is a prime. We do not distinguish a (virtual) representation and its character. Thus $R(G)$ stands for the set of virtual representations of $G$ as well as the set of integral combinations of irreducible characters. For $\pi\in  R(G)$, let $\rm{Irr}(\pi)$ be the set of constituents of $\pi$. For $\rho\in \rm{Irr}(G)$, we simply write $\rho\in\pi$ if $\rho\in \rm{Irr}(\pi)$. For $\pi_1, \pi_2\in R(G)$, let
\[
(\pi_1,\pi_2)_G=\frac{1}{|G|}\sum_{g\in G}\pi_1(g)\overline{\pi_2(g)},
\]
where the conjugate is the restriction of the usual complex conjugate to $\overline{\bf Q}\subset\overline{\bf Q}_\ell$.

{\bf Acknowledgement.} The first named author would like to thank Jiajun Ma for many helpful and enlightening discussions on this topic. We thank the anonymous referee for many valuable comments.

\section{Deligne-Lusztig theory} \label{sec2}
Let $\textbf{F}$ be a finite field of $q$ elements, $\textbf{G}$ be a connect reductive group defined over ${\bf F}$ and $F$ be the corresponding Frobenius endomorphism. The group of rational points of $\textbf{G}$ is denoted by $G=\textbf{G}^{F}$.

\subsection{Conjugacy classes of maximal tori}
For an $F$-stable maximal torus $\textbf{T}$ of $\textbf{G}$, let $W_{\textbf{G}}(\textbf{T})= \textbf{N}_{\textbf{G}}(\textbf{T})/\textbf{T}$ be its Weyl group, where $\textbf{N}_{\textbf{G}}(\textbf{T})$ is the normalizer of $\textbf{T}$ in $\textbf{G}$, and let $T={\bf T}^F$ be the group of rational points of ${\bf T}$.

\par  Fix a maximally split $F$-stable maximal torus $\textbf{T}$$_{0}$ of $\textbf{G}$, and denote its Weyl group by $W_G$. A conjugate $^{g}\textbf{T}$$_{0}$ =$g\textbf{T}$$_{0}g^{-1}$ of ${\bf T}_0$ by an element $g\in \textbf{G}$ is $F$-stable if and only if $g^{-1}F(g)\in \textbf{N}_{\textbf{G}}(\textbf{T}_{0})$. By the Lang-Steinberg theorem, for each $w \in W_G$ we may choose an element $g \in \textbf{G}$ such that $g^{-1}F(g)$ is in $\textbf{N}_{\textbf{G}}(\textbf{T}_{0})$ whose image in $W_G$ is equal to $w$. The $F$-stable maximal torus $^{g}\textbf{T}$$_{0}$ will be denoted by $\textbf{T}$$_{w}$. Two elements $w, w' \in W_G$ are called $F$-conjugate if there exists $x\in W_G$ such that $w' = x^{-1}wF(x)$. The map $^{g}\textbf{T}$$_{0} \to g^{-1}F(g)$ gives a bijection between the $G$-conjugacy classes of $F$-stable maximal tori of ${\bf G}$ and the $F$-conjugacy classes of $W_G$.

\subsection{Deligne-Lusztig characters}
In their celebrated paper \cite{DL}, P. Deligne and G. Lusztig defined a virtual representation $R^{G}_{T,\theta}$ of $G$, associated to a character $\theta$ of $T$. The construction of Deligne-Lusztig characters makes use of the deep theory of $\ell$-adic cohomology. Here we only review some standard facts on these characters which will be used in this paper (cf. \cite[Chapter 7]{C}).
\par If $y = su$ is the Jordan decomposition of an element $y \in G$, then
\begin{equation}\label{dl}
R^{G}_{T,\theta}(y)=\frac{1}{|C^{0}(s)|}\sum_{g\in G,s^g \in T}\theta(s^{g})Q^{C^{0}(s)}_{^{g}T}(u)
\end{equation}
where $C^{0}(s) = C^{0}_{G}(s)$ is the connected component of the centralizer of $s$ in $G$, and $Q^{C^{0}(s)}_{^{g}T} = R^{ C^{0}(s)}_{^{g}T,1}(u)$ is the Green function of $C^{0}(s)$ associated to $^{g}T$. Note that $s^{g}=g^{-1}sg \in T$ if and only if $^{g}T=gTg^{-1} \subset C^{0}(s)$.

 It is known that
\begin{equation}\label{triv}
1=\frac{1}{|W_{G}|}\sum_{w\in W_{G}}R^{G}_{T_{w},1}=\sum_{(T)\subset G}\frac{1}{|W_G(T)|}R^{G}_{T,1}
\end{equation}
where $(T) \subset G$ means that the summation is taken over the $G$-conjugacy classes of $F$-stable maximal tori in $G$.

Two virtual representations $R^G_{T,\theta}$ and $R^G_{T',\theta'}$ are disjoint  if $(T,\theta)$ and $(T,\theta')$ are not geometrically conjugate. The elements in the subspace of class functions on $G$ spanned by all the $R_{T,\theta}^{G}$'s are called uniform functions of $G$. For a class function $f$ on $G$, denote by $f^\#$ its orthogonal projection to the subspace of uniform functions.

\subsection{Weyl group}
By \cite[Lemma 3.1]{S2}, there is a natural bijection between conjugacy classes of maximal tori in $Sp_{2n}(q)$ and the disjoint union of the $O^\epsilon_{2n}(q)$-conjugacy classes of maximal tori in $SO^\epsilon_{2n}(q)$ for $\epsilon=\pm$. For $w \in W_{Sp_{2n}}$, we define $\epsilon_{w}= \epsilon$ if the $F$-stable maximal torus $T_{w}$ of $Sp_{2n}(q)$ corresponds to an $F$-stable maximal torus in $SO^\epsilon_{2n}(q)$, and define
\[
W^{\epsilon}_{Sp_{2n}} = \{ w \in W_{Sp_{2n}} | \epsilon_{w} = \epsilon \}.
\]
We shall identify the Weyl groups of $Sp_{2n}$ and $SO_{2n+1}$, which will be denoted by $W_{n}$. Thus we also write $W_{n}^\epsilon = W_{Sp_{2n}}^\epsilon$. The Weyl group $W_{SO^{+}_{2n}}$ of $SO^+_{2n}$ can be regarded as a subgroup of $W_{n}$ of index 2.

\par Clearly, $\epsilon_{w} = +$ if $T_{w}$ $\simeq GL_{1}(q)$ in $Sp_{2}(q)$, and $\epsilon_{w} = -$ if $T_{w}$ $\simeq U_{1}(q)$ in $Sp_{2}(q)$. Since $Sp_{2n}(q^{t})$ can be regarded as a subgroup of $Sp_{2nt}(q)$ for $t \in \mathbb{N}$, an $F$-stable maximal torus of $Sp_{2n}(q^{t})$ is also an $F$-stable maximal torus of $Sp_{2nt}(q)$. Hence $W_{Sp_{2n}(q^{t})}$ can be embedded as a subset of $W_{Sp_{2nt}}(q)$. For $w \in W_{Sp_{2n}(q^{t})}$, the value of $\epsilon_{w}$ is the same no matter whether $w$ is regarded as an element of $W_{Sp_{2n}(q^{t})}$ or $W_{Sp_{2nt}(q)}$, and therefore $\epsilon_{w} = +$ (resp. $\epsilon_{w} = -$) if $T \cong GL_{1}(q^t)$ (resp. $T_{w}\cong U_{1}(q^{t}))$ in $Sp_{2t}(q)$.

 \subsection{The character $\theta_{w}$} \label{sec2.4}
If $T\cong GL_{1}(q^{t})$ or $U_{1}(q^{t})$, let $\theta_{T}$ denote the unique character of $T$ of order 2, i.e. $\theta_{T}(a)=\nu(a)^{\frac{q^{t}-1}{2}} $ (resp. $\theta_{T}(a)=\nu(a)^{\frac{q^{t} +1}{2}} $) for $a \in T$ if $T \cong GL_{1}(q^{t})$ (resp. $T \cong U_{1}(q^{t})$), where $\nu$ is an isomorphism of $T$ onto a subgroup of $\overline{\bf Q}_\ell^{\times}$. Define $\theta_{T}=\theta_{T_{1}}\otimes ...\otimes \theta_{T_{r}}$ if $T \cong T_{1} \times ... \times T_{r}$ is an $F$-stable maximal torus of a connected reductive group $G$, where $T_{i} \cong GL_{1}(q^{t^{i}})$ or $U_{1}(q^{t^{i}}) $ for some positive integer $t_{i}$, $i = 1, . . . , r$. It is clear that $^{g}\theta_{T} = \theta_{^{g}T}$ for $g \in G$. If $T = T_{w}$ for some $w \in W_{G}$, the character $\theta_{T}$ will be denoted by $\theta_{w}$.

  \par One can check that $\theta_{T} = \chi_{G}|_T$ if $G = GL_{n}$, $U_{n}$ or $SO_n$ ($n$ a positive integer), where $\chi_{G}$ is the unique linear character of $G$ of order 2. It is known as the spinor norm character in the case of orthogonal groups. For a representation or a character $\pi$ of $G$, we abbreviate $\chi_{G}\otimes \pi$ by $\chi_G\pi$.

 \par The following identity can be found in \cite{P2}.

   \begin{lemma}\label{lem2.1} If $G=GL_n$, $U_n$ or $SO_n$, then
  \[
\chi_{G}=\frac{1}{|W_{G}|}\sum_{w\in W_{G}}R^{G}_{T_{w},\theta_{w}}.
\]
\end{lemma}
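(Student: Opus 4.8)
The plan is to compare the virtual character $R := \frac{1}{|W_G|}\sum_{w\in W_G} R^G_{T_w,\theta_w}$ with the linear character $\chi_G$ directly, using the character formula \eqref{dl} and the fact that $\theta_w = \theta_{T_w} = \chi_G|_{T_w}$. First I would recall that for $G = GL_n$, $U_n$ or $SO_n$ the group $G$ has a unique linear character of order $2$, namely $\chi_G$, and that by the very definition in \S\ref{sec2.4} one has $\theta_{T} = \chi_G|_{T}$ for every $F$-stable maximal torus $T$ of $G$; this is the identity stated (and attributed to \cite{P2}) just before the lemma. The key algebraic point is then the general ``twisting'' identity $\chi \otimes R^G_{T,\theta} = R^G_{T,\,\chi|_T \cdot \theta}$, valid for any linear character $\chi$ of $G$: this follows immediately from \eqref{dl} because a linear character is constant on the semisimple part in the relevant sense, or more simply from the geometric construction of $R^G_{T,\theta}$ (tensoring the Deligne-Lusztig variety's cohomology by a one-dimensional local system). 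Applying this with $\chi = \chi_G$, $\theta = \mathbf{1}$ gives $\chi_G \otimes R^G_{T_w,\mathbf{1}} = R^G_{T_w,\,\chi_G|_{T_w}} = R^G_{T_w,\theta_w}$.

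With that in hand the proof is a one-line computation: summing over $w \in W_G$ and dividing by $|W_G|$,
\[
\frac{1}{|W_G|}\sum_{w\in W_G} R^G_{T_w,\theta_w}
= \chi_G \otimes \Bigl(\frac{1}{|W_G|}\sum_{w\in W_G} R^G_{T_w,\mathbf{1}}\Bigr)
= \chi_G \otimes \mathbf{1} = \chi_G,
\]
where the middle equality is the well-known identity \eqref{triv} expressing the trivial representation as the averaged sum of the Green-function inductions $R^G_{T_w,\mathbf{1}}$. (One should note \eqref{triv} is stated for the trivial character; here it is exactly what we need after pulling $\chi_G$ out of the sum.)

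The only genuine point requiring care — and the step I expect to be the main obstacle — is justifying the twisting identity $\chi_G \otimes R^G_{T,\mathbf{1}} = R^G_{T,\chi_G|_T}$ cleanly. I would either cite it from \cite[Chapter 7]{C} (it is standard: tensoring by a linear character $\chi$ corresponds on Deligne-Lusztig varieties to twisting the coefficient system, sending $R^G_{T,\theta}$ to $R^G_{T,\theta\cdot(\chi|_T)}$), or give the short direct verification from \eqref{dl}: for $y = su$ with $s$ semisimple, $\chi_G(y) = \chi_G(s)$ since $\chi_G$ has order dividing $2$ hence is trivial on unipotent elements, and for each $g$ with $s^g \in T$ one has $\chi_G(s) = \chi_G(s^g) = (\chi_G|_T)(s^g)$ because $\chi_G$ is a class function; substituting into \eqref{dl} turns the summand $\theta(s^g) Q^{C^0(s)}_{{}^gT}(u)$ into $(\chi_G|_T \cdot \theta)(s^g) Q^{C^0(s)}_{{}^gT}(u)$, which is precisely the summand for $R^G_{T,\chi_G|_T\cdot\theta}(y)$. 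Once this is recorded, the lemma follows formally, so no further estimates or case analysis on the type of $G$ are needed beyond knowing that $\chi_G$ exists and is unique of order $2$, which is classical for $GL_n$, $U_n$ and $SO_n$.
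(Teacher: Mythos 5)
Your proposal is correct, but it is worth noting that the paper does not actually prove Lemma \ref{lem2.1}: it is simply quoted from \cite{P2} (``The following identity can be found in \cite{P2}''), so there is no in-paper argument to compare against line by line. Your route is the natural one and is self-contained modulo the identity $\theta_{T}=\chi_G|_{T}$, which the paper itself asserts without proof just before the lemma (``One can check that\dots''), so you are entitled to use it. The twisting identity $\chi_G\otimes R^G_{T,\mathbf{1}}=R^G_{T,\chi_G|_T}$ is exactly the computation the paper carries out later in the proof of Lemma \ref{lem2.3}; the one point where care is genuinely needed is that the paper's version of that computation invokes $\chi_G(u)=1$ from Lemma \ref{lem2.2}, which is itself deduced from Lemma \ref{lem2.1}, so quoting Lemma \ref{lem2.2} here would be circular. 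You avoid this correctly: a unipotent element in odd characteristic has order a power of $p$, hence any quadratic linear character is trivial on it, and then the substitution into (\ref{dl}) together with $\chi_G(s)=\chi_G(s^g)$ gives the twisting identity directly. Combined with (\ref{triv}) the lemma follows, and in fact your argument supplies a proof the paper omits while also untangling the mild circularity in the ordering of Lemmas \ref{lem2.1}--\ref{lem2.3}.
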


  \begin{lemma} \label{lem2.2}
If $u \in G$ is an unipotent element, then $\chi_{G}(u)=1$.
\end{lemma}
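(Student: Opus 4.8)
The plan is to exploit the character formula for Deligne–Lusztig virtual characters (equation \eqref{dl}) together with the expression for $\chi_G$ given in Lemma \ref{lem2.1}. Since $u$ is unipotent, its Jordan decomposition is $u = 1 \cdot u$, so the semisimple part is $s = 1$. In \eqref{dl} the only semisimple class that contributes is $s = 1$, for which $C^0(s) = C^0_G(1) = G$ and the condition $s^g \in T$ is automatic for every $g$; moreover $\theta(s^g) = \theta(1) = 1$. Thus \eqref{dl} collapses to
\[
R^G_{T_w,\theta_w}(u) = \frac{1}{|G|}\sum_{g \in G} Q^{G}_{\,{}^{g}T_w}(u) = \frac{1}{|G|}\sum_{g\in G} R^{G}_{\,{}^{g}T_w,\,1}(u).
\]
Averaging over $w \in W_G$ and using that ${}^{g}T_w$ runs (with multiplicity) over the $F$-stable maximal tori as $g$ and $w$ vary, one rewrites
\[
\chi_G(u) = \frac{1}{|W_G|}\sum_{w\in W_G} R^{G}_{T_w,\theta_w}(u)
= \frac{1}{|W_G|}\sum_{w\in W_G} R^{G}_{T_w,1}(u),
\]
where the right-hand side is exactly the value at $u$ of the uniform projection of the trivial character, by \eqref{triv}. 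Hence $\chi_G(u) = \mathbf{1}_G(u) = 1$.

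More carefully, the key point is that \eqref{dl} shows $R^G_{T_w,\theta_w}(u)$ depends on $\theta_w$ only through its values on semisimple elements $s$ with $s^g \in T_w$; for unipotent $u$ the only such $s$ is the identity, on which every character is trivial. Therefore $R^G_{T_w,\theta_w}(u) = R^G_{T_w,1}(u)$ for each $w$, and averaging over $W_G$ gives $\chi_G(u) = \left(\frac{1}{|W_G|}\sum_w R^G_{T_w,1}\right)(u) = 1$ by the identity \eqref{triv} for the trivial character. One should check that the averages of $R^G_{T_w,\theta_w}$ and $R^G_{T_w,1}$ over $w \in W_G$ produce the same collection of tori with the same multiplicities; this is immediate because the torus $T_w$ attached to $w$ is the same in both averages — only the character differs — and we have just seen the characters agree on $u$ term by term.

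I do not expect a serious obstacle here: the argument is a direct substitution into \eqref{dl} followed by invoking Lemma \ref{lem2.1} and \eqref{triv}. The only mild subtlety is bookkeeping — making sure that when one unfolds \eqref{dl} for $s = 1$, the double sum over $g$ and $w$ reassembles into $\frac{1}{|W_G|}\sum_w R^G_{T_w,1}(u)$ rather than something off by the factor $|W_G(T_w)|/|W_G|$; but this is exactly the standard reformulation already recorded in \eqref{triv} as the equality of the two averages $\frac{1}{|W_G|}\sum_{w} R^G_{T_w,1}$ and $\frac{1}{|W_G|}\sum_{(T)} R^G_{T,1}$, so no new work is needed. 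An alternative, even shorter route is to note that $\chi_G$ is a genuine \emph{linear} character of $G$ of order $2$, and any linear character of a finite group of Lie type is trivial on unipotent elements because the unipotent elements are products of commutators (equivalently, lie in the derived subgroup, or generate a $p$-group mapping to a group of order prime to $p$ in the abelianization); this gives $\chi_G(u) = 1$ without invoking the Deligne–Lusztig machinery at all. I would present the Deligne–Lusztig argument as the main proof since it fits the paper's framework, and perhaps remark on the elementary alternative.
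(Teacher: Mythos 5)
Your main argument coincides with the paper's proof: using Lemma \ref{lem2.1}, substituting $s=1$ into (\ref{dl}) so that $\theta_w(1)=1$ reduces each $R^{G}_{T_w,\theta_w}(u)$ to $R^{G}_{T_w,1}(u)$, and then invoking (\ref{triv}) to identify the average with the trivial character. The elementary alternative you sketch at the end (a linear character of order $2$ must be trivial on an element of $p$-power order since $p$ is odd) is also valid and shorter, but the paper takes the Deligne--Lusztig route you present as your main proof.
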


\begin{proof} By (\ref{dl}), (\ref{triv}) and Lemma \ref{lem2.1},
\[
\begin{aligned}
\chi_{G}(u)&=\frac{1}{|W_{G}|}\sum_{w\in W_{G}}R^{G}_{T_{w},\theta_{w}}(u)
\\&=\frac{1}{|W_{G}|}\sum_{w\in W_{G}}\frac{1}{|G|}\sum_{g\in G}\theta_{w}(1)Q^{G}_{^{g}T}(u)
\\&=\frac{1}{|W_{G}|}\sum_{w\in W_{G}}\frac{1}{|G|}\sum_{g\in G}Q^{G}_{^{g}T}(u)
\\&=1(u)
\\&=1.
\end{aligned}
\]
\end{proof}

Recall that an irreducible representation $\pi$ is called a unipotent representation if it occurs in some $R^{G}_{T,1}$. We have the following

  \begin{lemma} \label{lem2.3}
If $G=GL_n$, $U_n$ or $SO_n$, then
  \par (i)  $\pi$ is a unipotent representation if and only if $\chi_{G}\pi$ occurs in some $R^{G}_{T_w,\theta_{w}}$,
\par (ii) $R^{G}_{T_{w},\theta_{w}}$ only consists of $\chi_{G}\pi$ with $\pi$ unipotent.
\end{lemma}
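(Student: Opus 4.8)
The plan is to deduce Lemma \ref{lem2.3} from Lemma \ref{lem2.1} by exploiting the fact that tensoring with the linear character $\chi_G$ is an involution on $R(G)$ that interacts cleanly with Deligne--Lusztig induction. The key identity I would establish first is
\[
\chi_G \cdot R^G_{T_w,1} = R^G_{T_w,\theta_w},
\]
or more precisely $\chi_G \cdot R^G_{T_w,\theta} = R^G_{T_w, \theta\cdot(\chi_G|_T)} = R^G_{T_w,\theta\theta_w}$ for any character $\theta$ of $T_w$, using that $\chi_G|_{T_w} = \theta_w$ by the computation recorded just before Lemma \ref{lem2.1}. This in turn follows from the standard projection-formula behaviour of Deligne--Lusztig characters: for a linear character $\chi$ of $G$ that is trivial on all unipotent elements (which $\chi_G$ is, by Lemma \ref{lem2.2}), one has $\chi\cdot R^G_{T,\theta} = R^G_{T,\theta\cdot(\chi|_T)}$. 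I would cite \cite[Chapter 7]{C} for this, or derive it directly from the character formula \eqref{dl}: multiplying \eqref{dl} by $\chi_G(su) = \chi_G(s)$ (since $\chi_G(u)=1$) and noting $\chi_G(s) = \chi_G(s^g) = \theta_w(s^g)/\ldots$ — more carefully, $\chi_G(s)$ equals $(\chi_G|_T)(s^g) = \theta_w(s^g)$ whenever $s^g\in T=T_w$, so the factor $\chi_G(s)$ can be absorbed into the $\theta(s^g)$ term, turning $\theta$ into $\theta\theta_w$.

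Granting this identity, part (i) is immediate: $\pi$ is unipotent iff $(\pi, R^G_{T_w,1})_G\neq 0$ for some $w$, and since $(\pi, R^G_{T_w,1})_G = (\chi_G\pi, \chi_G R^G_{T_w,1})_G = (\chi_G\pi, R^G_{T_w,\theta_w})_G$, this happens iff $\chi_G\pi$ occurs in some $R^G_{T_w,\theta_w}$. For part (ii), I would argue that every $F$-stable maximal torus of $G$ is $G$-conjugate to some $T_w$ (and $\theta_{^{g}T} = {}^g\theta_T$, so the pair $(T,\theta_T)$ is well-defined up to conjugacy), hence the family $\{R^G_{T_w,\theta_w}\}_w$ is, up to the involution $\chi_G\cdot(-)$, exactly the family $\{R^G_{T_w,1}\}_w$; since each $R^G_{T_w,1}$ is a $\mathbb{Z}$-combination of unipotent irreducibles (by definition of "unipotent" together with the disjointness of Deligne--Lusztig characters attached to non-geometrically-conjugate pairs), applying $\chi_G\cdot(-)$ shows $R^G_{T_w,\theta_w}$ is a $\mathbb{Z}$-combination of the $\chi_G\pi$ with $\pi$ unipotent.

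The one genuine subtlety — the step I expect to need the most care — is the claim that the irreducible constituents of $R^G_{T_w,1}$ are exactly unipotent representations and nothing else. By definition a unipotent representation is one occurring in \emph{some} $R^G_{T,1}$; what is needed here is the converse-type fact that $R^G_{T_w,1}$ contains \emph{only} unipotent constituents, i.e.\ that no constituent of $R^G_{T,1}$ also occurs in some $R^G_{T',\theta'}$ with $\theta'$ nontrivial in a geometric class different from the trivial one. This is precisely the disjointness statement recalled in \S\ref{sec2}: $R^G_{T,\theta}$ and $R^G_{T',\theta'}$ share no constituent unless $(T,\theta)$ and $(T',\theta')$ are geometrically conjugate, and the trivial character's geometric class is a singleton among the relevant families. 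For $G = GL_n, U_n, SO_n$ this is standard (it is part of the Deligne--Lusztig / Lusztig classification), so I would invoke it as a known fact rather than reprove it. Everything else is the formal manipulation of the involution $\pi\mapsto\chi_G\pi$, which is routine once the basic identity $\chi_G\cdot R^G_{T_w,1} = R^G_{T_w,\theta_w}$ is in hand.
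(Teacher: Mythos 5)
Your proposal is correct and follows essentially the same route as the paper: the paper's entire proof consists of your key identity $\chi_{G}R^{G}_{T_{w},1}=R^{G}_{T_{w},\theta_{w}}$, derived exactly as you do by multiplying the character formula (\ref{dl}) by $\chi_{G}(y)=\chi_{G}(s^{g})=\theta_{w}(s^{g})$ (using Lemma \ref{lem2.2} to dispose of $\chi_{G}(u)$), with the formal deduction of (i) and (ii) left to the reader. The only comment is that the ``subtlety'' you isolate for part (ii) is vacuous: any constituent of $R^{G}_{T_{w},1}$ is unipotent by the very definition of a unipotent representation, so the disjointness of non-geometrically-conjugate Deligne--Lusztig characters is not actually needed there.
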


\begin{proof}
If $y = su$ is the Jordan decomposition of an element $y \in G$, then
\[
\begin{aligned}
\chi_{G}R^{G}_{T_{w},1}(y)&=\chi_{G}(y)\frac{1}{|C^{0}(s)|}\sum_{g\in G,s^{G} \in T_{w}}1(s^{g})Q^{C^{0}(s)}_{^{g}T_{w}}(u)
\\&=\frac{1}{|C^{0}(s)|}\sum_{g\in G,s^{G} \in T_{w}}\chi_{G}(s^{g})\chi_{G}(u)Q^{C^{0}(s)}_{^{g}T_{w}}(u)
\\&=\frac{1}{|C^{0}(s)|}\sum_{g\in G,s^{G} \in T_{w}}\theta_{w}(s^{g})Q^{C^{0}(s)}_{^{g}T_{w}}(u)
\\&=R^{G}_{T_{w},\theta_{w}}(y)
\end{aligned}
\]
\end{proof}

Similar to the notion of unipotent representations, a representation $\pi$ is called a $\theta$-representation if it occurs in some $R^{G}_{T_w,\theta_{w}}$. For $G=Sp_{2n}$, we do not have nontrivial characters of $G$, but we can still talk about $\theta$-representations.

\section{Weil representations and cuspidal unipotent representations} \label{sec3}

J. Adams and A. Moy consider in \cite{AM} (see also \cite{AMR, AKP, E}) the question of how the unipotent representations behave under theta correspondence. In particular they show that theta correspondence sends a cuspidal unipotent representation of $G$ to a cuspidal unipotent representation of $G'$ if $(G,G')=(Sp_{2n},O^\epsilon_{2n'})$ or $(U_{n},U_{n'})$. In the proof they use the decomposition of the Weil representation of $(Sp_{2n},O^\epsilon_{2n'})$ and $(U_{n},U_{n'})$ given in \cite{S2}. In \cite{P2} S.-Y. Pan gives a decomposition of the Weil representation of a finite symplectic-odd orthogonal dual pair, which we have recalled as Theorem \ref{pan}. Thus we can give the correspondence of cuspidal unipotent representations of such dual pairs based on Pan's result, together with another ingredient called conservation relation.

\subsection{Weil representation and theta lifting} \label{2.1}

Let $\omega_{Sp_{2n}}$ be the character of the Weil representation (cf. \cite{G}) of the symplectic group $Sp_{2n}$ which depends on a  nontrivial additive character $\psi$ of $F$, and let $\omega^{\#}_{Sp_{2n}}$ denote the uniform projection of $\omega_{Sp_{2n}}$, i.e. the projection onto the subspace of virtual characters spanned by all the Deligne-Lusztig characters. Let $(G, G^{\prime})$ be a reductive dual pair in $Sp_{2n}(q)$. Write
$\omega_{G,G'}$ for the restriction of $\omega_{Sp_{2n}}$ to $G\times G'$. Then it decomposes into a direct sum
\[
\omega_{G,G'}=\bigoplus_{\pi,\pi'} m_{\pi,\pi '}\pi\otimes\pi '
\]
where $\pi$ and $\pi '$ run over irreducible representations of $G$ and $G'$ respectively. We can rearrange this sum to get
\[
\omega_{G,G'}=\bigoplus_{\pi} \pi\otimes\Theta_{G'}(\pi )
\]
 where $\Theta_{G'}(\pi ) = \bigoplus_{\pi'} m_{\pi,\pi '}\pi '$ is a (not necessarily irreducible) representation of $G'$, called the (big) theta lifting of $\pi$ to $G'$. The theta lifting from $G$ to $G'$ will be written as $\pi \mapsto \Theta_{G'}(\pi)$, and we have $\pi'\in \Theta_{G'}(\pi)$ if and only if $\pi\otimes\pi'\in\omega_{G,G'}$. We should mention that it is an important open problem to find some ``good" constituents from $\Theta_{G'}(\pi)$ (see \cite{E, GH} for different approaches and results in this direction).   By abuse of notation we also write $\omega_{G,G'}$ for its character, and write $\omega^{\#}_{G,G'}$ for the uniform projection of $\omega_{G,G'}$.

It is often more convenient to work with the families of dual pairs associated to Witt towers instead of  a single dual pair. Thus let us introduce systematically some notations for Witt towers.
We will consider two Witt towers $\mathcal{T}$ and $\mathcal{T}'$ such that $(G_{n}, G_{n'}')$ form a reductive dual pair of type I for any $G_{n} \in \mathcal{T}$ and $G_{n'}' \in \mathcal{T}'$. There are the following types of Witt towers.

 $\bullet$ For unitary groups there are two Witt towers ${\bf U}^+=\left\{U_{2n}\right\}_{n\geq 0}$ and ${\bf U}^-=\left\{U_{2n+1}\right\}_{n\geq 0}$.

 $\bullet$ For symplectic groups there is only one Witt tower ${\bf Sp}=\left\{Sp_{2n}\right\}_{n\geq 0}$.

 $\bullet$ For even orthogonal groups there are two Witt towers ${\bf O}^+_\rm{even}=\left\{O^+_{2n}\right\}_{n\geq 0}$ and ${\bf O}^-_\rm{even}=\left\{O^-_{2n}\right\}_{n\geq 1}$.

$\bullet$ For odd orthogonal groups there are two Witt towers as well ${\bf O}^\epsilon_\rm{odd}=\left\{O^\epsilon_{2n+1}\right\}_{n\geq 0}$, $\epsilon=\pm$.

For the dual pair  $(Sp_{2n},O^{\epsilon}_{2n'})$ where $\epsilon=\pm$, we write $\omega^\epsilon_{n,n'}$ instead of $\omega_{G,G'}$. Since there are also two different nondegenerate symmetric bilinear forms on an odd dimensional vector space over $\bf{F}$ (though their orthogonal groups are physically equal), we have two different types of symplectic-odd orthogonal dual pairs. Similarly we denote them by $(Sp_{2n},O^{\epsilon}_{2n'+1})$ and the corresponding Weil representations by $\omega^\epsilon_{n,n'}$, where $\epsilon=\pm$. For unitary groups  write $G_n^\epsilon=U_{2n}$ for $\epsilon=+$ or $U_{2n+1}$ for $\epsilon=-$. Then we denote the Weil representation of $(G_n^\epsilon, G_{n'}^{\epsilon'})$ by $\omega_{n,n'}^{\epsilon,\epsilon'}$. We will specify the dual pair we are working with when necessary, so it should cause no confusion when we use the notation $\omega^\epsilon_{n,n'}$ or $\omega^{\epsilon,\epsilon'}_{n,n'}$, and occasionally we suppress the signs to ease notations.

 Now we recall the main result of \cite{P2}, which will be a crucial tool used in this paper.

  \begin{theorem} \label{pan}
Suppose that q is sufficiently large such that the main theorem in \cite{S2} holds. Then for a positive integer n and a nonnegative integer $n'$, we have
\[\begin{split}\omega^{\#}_{Sp_{2n},SO_{2n'+1}}\cdot(1\otimes \chi_{SO_{2n'+1}})=&\sum_{k=0}^{\min(n,n')}\frac{1}{|W_{k}|}\frac{1}{|W_{Sp_{2(n-k)}}|}\frac{1}{|W_{SO_{2(n'-k)+1}}|}\sum_{v \in W_{k}}
\\
&\sum_{\theta \in \rm{Irr}(T_{v})}  \sum_{w \in W_{Sp_{2(n-k)}}}\sum_{w' \in W_{SO_{2(n'-k)+1}}}\epsilon_{w}R^{Sp_{2n}}_{T_v\times T_{w},\theta\otimes \theta_{w}}\\& \otimes R^{SO_{2n'+1}}_{T_{v}\times T_{w'},\theta\otimes \theta_{w'}}\end{split}\]
where $\rm{Irr}(T_v)$ denotes the set of irreducible characters of the torus $T_v$.
\end{theorem}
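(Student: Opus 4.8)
The statement to be proved is Theorem \ref{pan}, which is the main result of \cite{P2}; strictly speaking the excerpt only \emph{recalls} it, so the plan is to indicate how one would reconstruct the proof following Pan's strategy. The starting point is the decomposition of $\omega^{\#}_{Sp_{2n},SO^\epsilon_{2n'}}$ and $\omega^{\#}_{Sp_{2n},U_{n'}}$ already available from \cite{S2}, which expresses each uniform projection as a sum of products $R^{Sp_{2n}}_{T,1}\otimes R^{G'}_{T',1}$ over matched pairs of tori, weighted by signs. First I would recall from \cite{S2} the explicit form of this decomposition in the even orthogonal (and unitary) case, where the pairing of conjugacy classes of tori in $Sp_{2n}(q)$ with those in $SO^\epsilon_{2n'}(q)$, $\epsilon=\pm$, via the bijection of \cite[Lemma 3.1]{S2} is the combinatorial backbone. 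The odd orthogonal group $SO_{2n'+1}$ is isogenous, as an algebraic group, to $Sp_{2n'}$, so its Weyl group and torus classes are indexed by the same $W_{n'}$; the key new input is that the spinor norm character $\chi_{SO_{2n'+1}}$ is nontrivial and, by Lemma \ref{lem2.1} and the construction of $\S\ref{sec2.4}$, twisting $R^{SO_{2n'+1}}_{T_{w'},1}$ by $\chi_{SO_{2n'+1}}$ replaces the trivial character of the torus by $\theta_{w'}$.

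The main steps would be: (1) Write down the Weil character $\omega^\epsilon_{n,n'}$ for the odd orthogonal dual pair $(Sp_{2n},O^\epsilon_{2n'+1})$ and relate it, via the seesaw/duality identities of \cite{MVW} and the standard formula for the Weil character on $Sp\times O$, to the even orthogonal Weil characters whose uniform projection is computed in \cite{S2}. Concretely, an odd orthogonal space decomposes as a line plus an even space, and the Weil representation restricted to $Sp_{2n}\times O^\epsilon_{2n'+1}$ can be analyzed through the embedding $O^\epsilon_{2n'}\hookrightarrow O^\epsilon_{2n'+1}$ together with the Weil representation of $Sp_{2n}$ itself (the ``line'' factor). (2) Apply the uniform projection, which is the orthogonal projection onto the span of the $R^G_{T,\theta}$, and use that this projection commutes with tensoring by a linear character of order $2$ such as $1\otimes\chi_{SO_{2n'+1}}$ — this is why the statement is phrased for $\omega^{\#}_{Sp_{2n},SO_{2n'+1}}\cdot(1\otimes\chi_{SO_{2n'+1}})$ rather than for $\omega^{\#}$ itself. (3) Transport the even-orthogonal decomposition of \cite{S2} through the above relation, obtaining a sum over a ``shared'' torus $T_v$ (indexed by $v\in W_k$, $0\le k\le\min(n,n')$) carrying an arbitrary character $\theta\in\mathrm{Irr}(T_v)$, and complementary tori $T_w$ in $Sp_{2(n-k)}$ and $T_{w'}$ in $SO_{2(n'-k)+1}$. (4) Identify the sign that appears: in \cite{S2} the sign is $\epsilon_w$ (equivalently a sign attached to the matched torus class), and the claim is that exactly the same sign $\epsilon_w$ survives. (5) Finally, twist by $1\otimes\chi_{SO_{2n'+1}}$ and use Lemma \ref{lem2.3}(ii) / the computation in its proof: $\chi_{SO_{2n'+1}}\cdot R^{SO_{2n'+1}}_{T_{v}\times T_{w'},\theta\otimes 1}=R^{SO_{2n'+1}}_{T_{v}\times T_{w'},\theta\otimes\theta_{w'}}$ (the character $\theta$ on the shared part $T_v$ is preserved if one tracks that $\chi_{SO_{2n'+1}}|_{T_v}$ is trivial on the relevant part, or is absorbed consistently), yielding precisely the right-hand side.

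The hard part, I expect, is step (1)–(3): making the passage from the odd case to the even case genuinely rigorous at the level of \emph{uniform projections}, not just characters. The subtlety is that while $SO_{2n'+1}$ and $Sp_{2n'}$ share Weyl-group combinatorics, the Weil representation of the odd orthogonal dual pair is not literally a restriction of an even orthogonal one, and the ``extra line'' contributes its own Weil character of $Sp_{2n}$ (a non-uniform object whose uniform projection involves a Gauss-sum sign $\epsilon$). Correctly bookkeeping how this sign $\epsilon$ interacts with the signs $\epsilon_w$ coming from \cite{S2}, and checking that after the twist by $\chi_{SO_{2n'+1}}$ the net sign is exactly $\epsilon_w$ (independent of $\epsilon$ and of $w'$), is the delicate combinatorial heart of the argument, and is where the hypothesis that $q$ is large — so that the decomposition of \cite{S2} is available and the virtual characters $R^G_{T,\theta}$ behave as expected — is used. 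The remaining steps are the routine character-theoretic manipulations of $\S\ref{sec2}$, essentially those already carried out in the proof of Lemma \ref{lem2.3}.
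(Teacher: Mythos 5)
The paper does not prove this statement: Theorem \ref{pan} is quoted verbatim as the main result of \cite{P2}, and the only ``proof'' in the paper is the citation. So there is nothing internal to compare your argument against, and the right standard is whether your sketch would actually reconstruct Pan's proof. You correctly identify the two genuine inputs — Srinivasan's decomposition of $\omega^{\#}$ for the symplectic--even orthogonal pairs in \cite{S2}, and the twisting mechanism of Lemma \ref{lem2.1}/\ref{lem2.3} that converts $R^{SO_{2n'+1}}_{T,\theta\otimes 1}$ into $R^{SO_{2n'+1}}_{T,\theta\otimes\theta_{w'}}$ — and your step (5) is essentially the computation in the proof of Lemma \ref{lem2.3}, so the outer layer of your plan is sound.

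The gap is that steps (1)--(4), which you yourself flag as ``the hard part,'' are exactly where the entire content of the theorem lives, and your proposal does not supply them. Pan's argument is not a formal transport of the even-orthogonal decomposition through an embedding $O^\epsilon_{2n'}\hookrightarrow O^\epsilon_{2n'+1}$; it requires an explicit character formula for $\omega_{Sp_{2n},O^\epsilon_{2n'+1}}$ on pairs of Jordan decompositions and a direct computation of the inner products $\bigl(\omega_{Sp_{2n},SO_{2n'+1}},\, R^{Sp_{2n}}_{T,\theta}\otimes R^{SO_{2n'+1}}_{T',\theta'}\bigr)$, using the Deligne--Lusztig inner product formula and Green-function identities, to see which geometric conjugacy classes contribute, why the shared torus $T_v$ carries an arbitrary character $\theta$ while the complementary factors carry exactly $\theta_w$ and $\theta_{w'}$, and why the coefficient is $\epsilon_w$ (independent of $w'$). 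In particular the claim in your step (2) that the uniform projection ``commutes with tensoring by $1\otimes\chi_{SO_{2n'+1}}$'' does need justification (it holds because multiplication by $\chi_{SO_{2n'+1}}$ permutes the Deligne--Lusztig basis, by the identity in Lemma \ref{lem2.3}, hence preserves the uniform subspace), and the sign bookkeeping for the ``extra line'' is not a routine matter — it is the theorem. As written, your proposal is a credible table of contents for \cite{P2} rather than a proof.
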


We should point out that the above formula for the uniform projection is independent of the choice of $SO_{2n'+1}=SO^\epsilon_{2n'+1}$. In the rest of this paper the finite field ${\bf F}$ is assumed to be large enough as in Theorem \ref{pan}.

\subsection{Correspondence of unipotent and $\theta$-representations} \label{2.2}

For finite unitary and symplectic-even orthogonal dual pairs, J. Adams and A. Moy showed that if $\pi\otimes\pi'$ occurs in $\omega_{n,n'}$, then $\pi$ is unipotent if and only if $\pi'$ is unipotent. This does not hold for finite symplectic-odd orthogonal dual pairs, in which case one of the results we will prove says that $\pi$ is unipotent if and only if $\pi'$ is a $\theta$-representation when $n=n'$. In a similar manner as in \cite{AM}, our proof makes use of the decomposition of the Weil representation given by Theorem \ref{pan} in the symplectic-odd orthogonal case.

\par The following well-known result can be found in \cite{AM, MVW}, which says that the first occurrence of theta lifting of an irreducible cuspidal representation in a given Witt tower is also irreducible cuspidal, while the later occurrences are not cuspidal.

\begin{theorem}\label{FO}
Let $(G_n,G'_{n'})$ be a dual pair of finite classical groups. Assume that $\pi$ is an irreducible cuspidal representation of $G_n$, and that

(i) $\pi\mapsto \pi'$ in the theta correspondence for $G_n\times G'_{n'}$,

(ii) $\pi$ does not occur in the theta correspondence for $G_n\times G'_{k}$ for any $k<n'$.

{\flushleft{Then $\pi'$ is irreducible cuspidal, and $\Theta_{G'_k}(\pi)$ has no cuspidal constituents for $k>n'$.}}
\end{theorem}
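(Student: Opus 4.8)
The plan is to reduce both assertions to computations of Harish-Chandra (Jacquet) restriction modules of the big theta lift. Because $G_n$ and $G'_m$ are mutual centralizers, $\Theta_{G'_m}(\pi)$ is the $G'_m$-module $\mathrm{Hom}_{G_n}(\omega_{G_n,G'_m},\pi)$, and Harish-Chandra restriction, being exact and affecting only the $G'_m$-factor, commutes with $\mathrm{Hom}_{G_n}(-,\pi)$. Hence for a maximal parabolic $P'=M'N'$ of $G'_m$ with Levi factor $M'=GL_a\times G'_{m-a}$ one has, as $M'$-modules,
\[
\big(\Theta_{G'_m}(\pi)\big)_{N'}\ \cong\ \mathrm{Hom}_{G_n}\!\big((\omega_{G_n,G'_m})_{N'},\,\pi\big),
\]
so everything comes down to understanding $(\omega_{G_n,G'_m})_{N'}$ as a $G_n\times M'$-module.

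The key input, and what I expect to be the main obstacle, is the finite-field analogue of Kudla's filtration of the Weil representation: $(\omega_{G_n,G'_m})_{N'}$ carries a filtration whose graded pieces $R^{(0)},\dots,R^{(a)}$ (for $a$ at most the Witt index of $G_n$) are, up to twist, $R^{(0)}\cong\rho_a\otimes\omega_{G_n,G'_{m-a}}$ for a fixed representation $\rho_a$ of $GL_a$, while for $j\geq1$ the piece $R^{(j)}$ is parabolically induced on the $G_n$-factor from the parabolic $Q_j=L_jU_j$ of $G_n$ with Levi $L_j=GL_j\times G_n^{(j)}$, tensored with a Weil representation $\omega_{G_n^{(j)},G'_{m-a}}$ and a representation of $GL_a$; here $G_n^{(j)}$ is the classical group of the same type obtained from $G_n$ by removing $j$ hyperbolic planes. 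Establishing this, and carrying out the (purely combinatorial but lengthy) bookkeeping uniformly across the different families of dual pairs, is the technical heart; it is the finite counterpart of the computations in Chapter 3 of \cite{MVW}.

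Granting the filtration, $\mathrm{Hom}_{G_n}(-,\pi)$ kills $R^{(j)}$ for every $j\geq1$: since $Q_j$ is a proper parabolic and $\pi$ is cuspidal, adjunction gives $\mathrm{Hom}_{G_n}\big(\mathrm{Ind}_{Q_j}^{G_n}(\sigma),\pi\big)\cong\mathrm{Hom}_{L_j}\big(\sigma,(\pi)_{U_j}\big)=0$. Only $R^{(0)}$ survives, yielding the recursion $\big(\Theta_{G'_m}(\pi)\big)_{N'}\cong\rho_a\otimes\Theta_{G'_{m-a}}(\pi)$. For $m=n'$ every maximal parabolic has $m-a<n'$, so $\Theta_{G'_{n'-a}}(\pi)=0$ by hypothesis (ii); hence all Jacquet modules of $\Theta_{G'_{n'}}(\pi)$ vanish, i.e. $\pi'=\Theta_{G'_{n'}}(\pi)$ is cuspidal, and it is nonzero by (i). Feeding the same recursion back in $m$ shows moreover that $\Theta_{G'_k}(\pi)\neq0$ for all $k\geq n'$, and that for $k>n'$ the parabolic with $a=k-n'$ gives $\big(\Theta_{G'_k}(\pi)\big)_{N'}\cong\rho_a\otimes\Theta_{G'_{n'}}(\pi)\neq0$, so $\Theta_{G'_k}(\pi)$ is itself non-cuspidal.

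To upgrade this to ``no cuspidal constituents for $k>n'$'', and to obtain the irreducibility of $\pi'$, I would run the argument symmetrically in the two members of the pair, organized as an induction on the total rank: a cuspidal constituent $\sigma$ of $\Theta_{G'_k}(\pi)$ yields $\pi\in\Theta_{G_n}(\sigma)$ with $\sigma$ cuspidal, and if $n_0\leq n$ is the first occurrence of $\sigma$ in the $G$-tower, then the cases $n_0<n$ and $n_0=n$ lead, via the first-occurrence statement applied to $\sigma$ together with the inductive hypothesis, to $\Theta_{G_n}(\sigma)=\pi$ with $n$ the first-occurrence level of $\sigma$; the uniqueness of the first-occurrence lift---which for $q$ large follows from the conservation relations of \cite{P1,SZ} (or from Howe duality at first occurrence)---then forces $\sigma$ to live on $G'_{n'}$, contradicting $k>n'$. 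The same multiplicity-one input at first occurrence gives that $\pi'$ is irreducible rather than merely cuspidal. All of this is the content of \cite{AM} in the cases treated there and of \cite{MVW} in general.
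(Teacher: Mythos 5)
The paper does not prove this theorem at all: it is quoted as a known result with a citation to \cite{AM} and \cite{MVW}, so there is no in-paper argument to compare yours against. Judged on its own terms, the first half of your proposal is essentially the standard proof and is sound: the Jacquet-module formula for $\omega_{G_n,G'_m}$ you describe is exactly \cite[Chap.~3, IV, th.~5]{MVW} (the paper itself quotes it in \S 4; over $\overline{\bf Q}_\ell$ it is a direct sum rather than a filtration, but that only helps you), cuspidality of $\pi$ kills every summand induced from a proper parabolic of $G_n$, and the surviving recursion $\bigl(\Theta_{G'_m}(\pi)\bigr)_{N'}\cong\rho_a\otimes\Theta_{G'_{m-a}}(\pi)$ together with hypothesis (ii) shows that all Jacquet modules of $\Theta_{G'_{n'}}(\pi)$ vanish, i.e.\ the first occurrence is cuspidal, and that $\Theta_{G'_k}(\pi)$ is not entirely cuspidal for $k>n'$.

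The gaps are in the last paragraph, which is where the actual content of ``no cuspidal constituents for $k>n'$'' and of the irreducibility of $\pi'$ lives. First, your induction ``on the total rank'' does not decrease: a cuspidal constituent $\sigma$ of $\Theta_{G'_k}(\pi)$ produces the question of whether the cuspidal $\pi$ can occur in $\Theta_{G_n}(\sigma)$ above the first occurrence of $\sigma$, which concerns the \emph{same} pair $(G_n,G'_k)$ and the same total rank, so the argument is circular as written. Second, invoking the conservation relations of \cite{P1} is doubly problematic here: in the finite-field setting those relations are established only for cuspidal representations and their proof takes the present theorem as input, and in any case conservation compares first-occurrence indices across two Witt towers --- it does not deliver uniqueness or multiplicity one of the lift at first occurrence. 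The standard route (in \cite{AM} and \cite[Chap.~3]{MVW}) closes both gaps by a separate multiplicity count for $\dim\operatorname{Hom}_{G_n\times G'_k}(\omega,\pi\otimes\sigma)$ with \emph{both} factors cuspidal (Howe duality for cuspidal pairs), which is the ingredient your sketch is missing; Jacquet modules alone cannot detect cuspidal constituents of the big lift, and conservation cannot substitute for this count.
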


Recall that by   Lemma \ref{lem2.3}, the map $\pi \mapsto \chi_{G}\pi$ ($G\neq Sp_{2n}$) gives a bijection between the unipotent representations and $\theta$-representations of $G$, and by Lemma \ref{lem2.2} it preserves the set of cuspidal representations.

We have the following basic result.

\begin{theorem}\label{uni}
Suppose that an irreducible representation $\pi \otimes \pi '$ of  $(Sp_{2n}, SO^\epsilon_{2n'+1}) $ occurs in $\omega^\epsilon_{n,n'}$. Then the following hold.

(i) If $\pi$ is a unipotent representation of $Sp_{2n}$, then $n'\geq n$ and
 $\pi '\in R^{SO^\epsilon_{2n'+1}}_{T_{v}\times T_{w'},\theta_v\otimes1}$ for some $v\in W_n$ and $w'\in W_{n'-n}$.

(ii) If $\pi'$ is a $\theta$-representation of $SO^\epsilon_{2n'+1}$, then
$n\geq n'$ and $\pi \in R^{Sp_{2n}}_{T_{v}\times T_{w},1\otimes\theta_w}$ for some $v\in W_{n'}$ and $w\in W_{n-n'}$.

(iii)  $\pi$ is a $\theta$-representation of $Sp_{2n}$ if and only if $\pi'$ is a unipotent representation of $SO_{2n'+1}^\epsilon$.

\end{theorem}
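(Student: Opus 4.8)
The plan is to extract from Pan's formula (Theorem \ref{pan}) a decomposition of $\omega^\epsilon_{n,n'}\cdot(1\otimes\chi_{SO_{2n'+1}})$ into pieces indexed by the ``shared'' torus $T_v$ ($v\in W_k$, $k\le\min(n,n')$) and the characters $\theta\in\mathrm{Irr}(T_v)$, and then track where a representation $\pi$ (resp. $\pi'$) with a prescribed Deligne--Lusztig type can appear. For part (i): if $\pi$ is unipotent, then $\pi$ occurs in some $R^{Sp_{2n}}_{T,1}$; by disjointness of Deligne--Lusztig characters attached to non-geometrically-conjugate $(T,\theta)$, $\pi$ can only pair nontrivially with the terms of Pan's sum in which the symplectic factor $R^{Sp_{2n}}_{T_v\times T_w,\theta\otimes\theta_w}$ has trivial central/geometric part — which forces $\theta$ to be such that $\theta\otimes\theta_w$ is geometrically conjugate to $1$ on $T_v\times T_w$. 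Since $\theta_w$ has order $2$ and $SO$/$Sp$ have no relevant obstruction, after applying Lemma \ref{lem2.3} (which says $R^{Sp_{2n}}_{T_v\times T_w,\theta\otimes\theta_w}$ meets $\pi$ unipotent exactly when... adapted to the $Sp$ side where there is no $\chi_G$, one argues directly that the only surviving $\theta$ is trivial, so $k=n$ is forced by a dimension/support count on the $Sp_{2n}$ factor, giving $n'\ge n$), one reads off on the $SO^\epsilon_{2n'+1}$ side that $\pi'\in R^{SO^\epsilon_{2n'+1}}_{T_v\times T_{w'},1\otimes\theta_{w'}}$ for some $v\in W_n$, $w'\in W_{n'-n}$. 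Twisting by $\chi_{SO_{2n'+1}}$ (which we divided out at the start) turns $1\otimes\theta_{w'}$ into $\theta_v\otimes 1$ via Lemma \ref{lem2.3}(ii) and the compatibility $\theta_T=\chi_G|_T$, yielding the stated form $\pi'\in R^{SO^\epsilon_{2n'+1}}_{T_v\times T_{w'},\theta_v\otimes 1}$.

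Part (ii) is the mirror image: if $\pi'$ is a $\theta$-representation of $SO^\epsilon_{2n'+1}$, then $\chi_{SO_{2n'+1}}\pi'$ is unipotent by Lemma \ref{lem2.3}, so $\pi'$ occurs in $R^{SO^\epsilon_{2n'+1}}_{T,\theta_T}$ for some $T$; feeding this into Pan's formula (now without pre-twisting, or equivalently recognizing $\pi'$ inside the $1\otimes\chi$-twisted object) forces the shared character $\theta$ to be trivial and $k=n'$, hence $n\ge n'$, and the symplectic factor becomes $R^{Sp_{2n}}_{T_v\times T_w,1\otimes\theta_w}$ with $v\in W_{n'}$, $w\in W_{n-n'}$, as claimed. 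The key technical point in both (i) and (ii) is the orthogonality/disjointness of Deligne--Lusztig characters: a given irreducible $\rho$ lies in the uniform span of a family of $R^G_{T,\theta}$'s only among those $(T,\theta)$ in a single geometric conjugacy class, and the Weil character's uniform projection is the sum in Theorem \ref{pan}, so pairing $\pi\otimes\pi'$ against $\omega^{\#}$ isolates exactly one geometric class on each side; matching the two classes through the common index $v$ and the common character $\theta$ is what links $n$ and $n'$.

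For part (iii), the statement is that $\pi$ is a $\theta$-representation of $Sp_{2n}$ iff $\pi'$ is unipotent of $SO^\epsilon_{2n'+1}$, with no inequality between $n$ and $n'$ asserted — this should follow from the same analysis of Pan's formula but now selecting the terms where $\theta_w$ (on the symplectic side) is the nontrivial twist and $\theta_{w'}=1$ (on the orthogonal side), i.e. reading the formula ``in the other direction.'' Concretely, I would twist Pan's identity on the symplectic side instead: since there is no linear character of $Sp_{2n}$, a $\theta$-representation of $Sp_{2n}$ is by definition a constituent of some $R^{Sp_{2n}}_{T_v\times T_w,1\otimes\theta_w}$; the disjointness argument then says such $\pi$ pairs with $\omega^{\#}$ only through terms of Pan's sum having symplectic factor of exactly this shape, whose companion orthogonal factor is $R^{SO^\epsilon_{2n'+1}}_{T_v\times T_{w'},1\otimes\theta_{w'}}$, which after the $\chi_{SO_{2n'+1}}$-twist that Pan's formula carries is $R^{SO^\epsilon_{2n'+1}}_{T_v\times T_{w'},\theta_v\otimes\theta_{w'}\cdot\chi}$; by Lemma \ref{lem2.3} this is precisely a combination of $\chi_{SO}\rho$ with $\rho$ unipotent — so $\pi'$, which is $\chi_{SO_{2n'+1}}$ times what appears, is unipotent, and conversely. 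The main obstacle I anticipate is bookkeeping the two order-$2$ twists ($\theta_w$ on $Sp$, $\theta_{w'}$ and the global $\chi_{SO_{2n'+1}}$ on $SO$) consistently across Pan's formula so that the geometric-conjugacy matching is airtight — in particular making sure that the ``shared'' character $\theta\in\mathrm{Irr}(T_v)$ is genuinely forced to be trivial in (i) and (ii) but is allowed to range freely (modulo the constraint) in (iii), and that no sign $\epsilon_w$ from Pan's formula interferes with the conclusion that the relevant multiplicity is nonzero.
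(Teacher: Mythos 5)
Your treatment of (i) and (ii) follows the same route as the paper: reduce to geometric conjugacy classes, match the class of $\pi\otimes\pi'$ against the terms of Pan's formula, and use that $\theta_w$ is nontrivial on every nontrivial factor of $T_w$ to force $\theta=1$ and $T_w=1$ (hence $k=n$, resp.\ $k=n'$). One step you leave implicit deserves care, and it is exactly the step the paper spells out: from $(\pi\otimes\pi',\omega)\neq 0$ you cannot directly conclude that $\pi\otimes\pi'$ ``pairs nontrivially with $\omega^{\#}$,'' since uniform projections of distinct irreducibles in the same geometric class need not be orthogonal and the inner product $(\pi_H^{\#},\omega^{\#})$ could a priori vanish. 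The paper instead decomposes $\omega=\bigoplus_s\omega(s)$ by geometric classes, notes that $\omega(s_0)\neq 0$ (it contains $\pi\otimes\pi'$) is a \emph{non-negative} combination of irreducibles, and uses that the regular representation is uniform to conclude $\omega(s_0)^{\#}\neq 0$; therefore the class $(s_0)$ must occur among the summands of Pan's formula. This positivity argument is also what disposes of your worry about the signs $\epsilon_w$: no cancellation can kill the $(s_0)$-component.

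There is, however, a genuine error in your sketch of (iii). You write that ``a $\theta$-representation of $Sp_{2n}$ is by definition a constituent of some $R^{Sp_{2n}}_{T_v\times T_w,1\otimes\theta_w}$.'' That is not the definition: a $\theta$-representation of $Sp_{2n}$ is a constituent of $R^{Sp_{2n}}_{T_w,\theta_w}$ with $w\in W_n$, where $\theta_w=\theta_{T_w}$ is the order-$2$ character of the \emph{full} maximal torus; in the product notation of Pan's formula this is the character $\theta_v\otimes\theta_w$, i.e.\ the relevant terms are those with $\theta=\theta_v$ (the character $1\otimes\theta_w$ is what appears in part (ii), and it defines a different geometric class unless $v$ is trivial). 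With your selection $\theta=1$ the companion orthogonal factor of $\omega^{\#}$ is $\chi_{SO_{2n'+1}}\cdot R^{SO_{2n'+1}}_{T_v\times T_{w'},1\otimes\theta_{w'}}=R^{SO_{2n'+1}}_{T_v\times T_{w'},\theta_v\otimes 1}$, whose constituents are in general neither unipotent nor $\theta$-representations, so the appeal to Lemma \ref{lem2.3} at that point does not apply and the conclusion ``$\pi'$ is unipotent'' does not follow from the argument as written. The fix is small: taking $\theta=\theta_v$, the orthogonal factor of $\omega^{\#}$ becomes $\chi_{SO_{2n'+1}}\cdot R^{SO_{2n'+1}}_{T_v\times T_{w'},\theta_v\otimes\theta_{w'}}=R^{SO_{2n'+1}}_{T_v\times T_{w'},1}$, which is manifestly unipotent, and since every $k\leq\min(n,n')$ contributes such a term there is indeed no inequality between $n$ and $n'$ in (iii); the converse direction is symmetric.
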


\begin{proof}
The proof is similar in spirit to that of \cite[Theorem 3.5]{AM}. First of all, for $G=SO_{2n'+1}$ from $\theta_T=\chi_G|_T$ it follows that all the pairs $(T_{w},\theta_{w})$, $w\in W_{n}$ are in the same geometric conjugacy class. More generally, two pairs of the form $(T_{v_i}\times T_{w_i},1\times\theta_{w_i})$, $v_i\in W_{n_i}$, $w_i\in W_{n'-n_i}$, $i=1,2$ are geometrically conjugate if and only if $n_1=n_2$.

 We need to recall some basic facts. Denote $G \cdot G'$ by $H$, and the character $R^{G}_{T}(\theta)\times R^{G'}_{T'}(\theta')$ by $R^{H}_{T\cdot T'} (\theta \cdot \theta')$. These characters form an
orthogonal basis for the space of uniform class functions of $H$. The set of irreducible characters $\mathcal{E}(H)$ of $H$ can be partitioned
by geometric conjugacy classes (see e.g. \cite{L2})
\[
\mathcal{E}(H)=\coprod_{s}\mathcal{E}(H,(s))
\]
where $s=s_{G^{*}}\cdot s_{G'^{*}} $ runs over the semisimple conjugacy classes of the dual group $H^{*} = G^{*}\cdot G'^{*}$. The irreducible constituents of a single $R^{H}_{T\cdot T'} (\theta\times \theta')$ are all associated to one geometric conjugacy class $(s)$.
Let $\pi_{H}$ be a representation of $H$. Then we have
\[
\pi_{H}=\bigoplus_{s}\pi_{H}(s)
\]
where $s$ runs over the semisimple conjugacy classes in $H^*$ and $\pi_{H}(s)\in \mathcal{E}(H,(s))$ is a subrepresentation of $H$. It is known that  if $\pi\in \mathcal{E}(H,(s))$ and $\pi'\in \mathcal{E}(H,(s'))$   with $(s)\neq (s')$, then $(\pi,\pi')_H=0$. We also have
\[
\pi_{H}^{\#}=\bigoplus_{s}\pi_{H}(s)^{\#}.
\]
If $\pi_{H}(s)\ne0$, then it is a positive combination of representations in $\mathcal{E}(H,(s))$. It follows from the fact that the
regular representation is uniform that $\pi_{H}(s)^{\#}$ is nonzero.

Now let $\pi_H=\pi\otimes\pi'$, which occurs in $\omega_{n,n'}^\epsilon$ by assumption. To prove (i), we note that if $\pi$ is unipotent then $\pi_H\in \mathcal{E}(H,(s))$ with $s_{G^*}=1$. By the above argument, $\pi_H^\#=\pi_H(s)^\#\neq 0$ hence it occurs in some $R^{Sp_{2n}}_{T,1}\otimes R^{SO^\epsilon_{2n'+1}}_{T',\theta}$, which has to appear in a summand of $\omega_{n,n'}^\#$. Then the conclusion follows from Theorem \ref{pan} and the first paragraph of the proof. The proof of (ii) and (iii) is similar which will be omitted.
\end{proof}

In particular by Theorem \ref{uni} (i) and (ii), if $n=n'$ then $\pi$ is a unipotent representation of $Sp_{2n}$ if and only if $\pi'$ is a $\theta$-representation of $SO_{2n+1}^\epsilon$. Applying Theorem \ref{FO} and Theorem \ref{uni}, we shall describe the theta correspondence between cuspidal unipotent representations of $Sp_{2n}$
and cuspidal $\theta$-representations of $SO_{2n'+1}$. We remark that the latter notion is not vacuum thanks to the following simple lemma.

\begin{lemma}\label{lem3.3} If $\pi$ is a cuspidal unipotent representation of a finite classical group $G$ which is not symplectic, then $\chi_G\pi$ is a cuspidal $\theta$-representation.
\end{lemma}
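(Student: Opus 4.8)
The plan is to reduce the claim to the analogous statement about cuspidality of ordinary Deligne–Lusztig induction, using the fact from Lemma \ref{lem2.3} that for a non-symplectic finite classical group $G$ the functor $\pi\mapsto\chi_G\pi$ identifies unipotent representations with $\theta$-representations. Concretely, $\chi_G\pi$ is by definition a $\theta$-representation precisely because $\pi$ is unipotent (Lemma \ref{lem2.3}(i)), so the only content to verify is that $\chi_G\pi$ is cuspidal when $\pi$ is. First I would recall that cuspidality of an irreducible representation $\rho$ of $G$ is detected by the vanishing of the Jacquet module $\rho_N$ for every proper $F$-stable parabolic $\mathbf{P}=\mathbf{M}\mathbf{N}$, equivalently $({}^*R^G_M\rho,\sigma)_M=0$ for all irreducible $\sigma$ of $M$ and all proper Levi $M$.

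Next I would use that twisting by the linear character $\chi_G$ interacts well with parabolic induction and restriction: for a proper $F$-stable parabolic with Levi $M$ one has $\chi_G|_M$ is again a linear character of order dividing $2$ (it is $\chi_M$ on the classical-type factors and trivial on the $GL$-type factors, by the description of $\chi_G$ in \S\ref{sec2.4} via $\theta_T$), and restriction to $M$ commutes with such a twist, i.e. ${}^*R^G_M(\chi_G\rho)=(\chi_G|_M)\cdot{}^*R^G_M(\rho)$. Applying this with $\rho=\pi$ cuspidal unipotent gives ${}^*R^G_M(\chi_G\pi)=(\chi_G|_M)\cdot{}^*R^G_M(\pi)=0$ for every proper Levi $M$, so $\chi_G\pi$ is cuspidal. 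Finally, $\chi_G\pi$ is irreducible because tensoring an irreducible representation with a linear character is irreducible, and it is a $\theta$-representation by Lemma \ref{lem2.3}(i); hence it is a cuspidal $\theta$-representation.

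The one point that needs a little care — and which I expect to be the main (though minor) obstacle — is the compatibility statement ${}^*R^G_M(\chi_G\rho)=(\chi_G|_M){}^*R^G_M(\rho)$, or at least its Harish-Chandra (split-Levi) analogue that suffices here since cuspidality only tests split parabolics. For Harish-Chandra restriction this is essentially formal: $\chi_G$ restricts to a character of $P$ trivial on $N$ (as $N$ is a $p$-group and $\chi_G$ has order $2$ with $p$ odd), so it descends to $\chi_G|_M$ on $M=P/N$, and taking $N$-invariants commutes with tensoring by a character of $P$ that is trivial on $N$. One must also know that $\chi_G|_M$ is still of the form $\chi_{M_1}\otimes\cdots$ on the factors, which follows from the product formula $\theta_T=\theta_{T_1}\otimes\cdots\otimes\theta_{T_r}$ in \S\ref{sec2.4} together with $\theta_T=\chi_G|_T$; this guarantees $(\chi_G|_M)\cdot{}^*R^G_M(\pi)$ is again built from $\theta$-representations of the factors, but for the present lemma we only need that it vanishes, which is immediate from ${}^*R^G_M(\pi)=0$. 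No deeper input is required, so the lemma follows quickly once this bookkeeping is in place.
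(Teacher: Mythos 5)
Your proposal is correct and takes essentially the same approach as the paper: both arguments reduce to the fact that $\chi_G$ is trivial on the unipotent radical of any proper parabolic, so twisting by $\chi_G$ does not disturb the vanishing of Jacquet modules, and then invoke Lemma \ref{lem2.3} for the $\theta$-representation property. The only cosmetic difference is that you justify the triviality of $\chi_G$ on $N$ by the order argument ($N$ is a $p$-group, $\chi_G$ has order $2$, $p$ odd), whereas the paper cites Lemma \ref{lem2.2} ($\chi_G(u)=1$ for unipotent $u$) and phrases the cuspidality check directly in terms of character values.
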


\begin{proof}
For any unipotent element $u$, by Lemma \ref{lem2.2} we have
\[
\chi_G\pi(u)=\chi_G(u)\pi(u)=\pi(u).
\]
Hence $\chi_G\pi$ is cuspidal. It is a $\theta$-representation by Lemma \ref{lem2.3}.
\end{proof}

\subsection{First occurrence for symplectic-odd orthogonal pairs.} For an irreducible representation $\pi$ of $G_n$, the smallest integer $n'(\pi)$ such that $\pi$ occurs in $\omega_{n,n'(\pi)}$ is called the {\it first occurrence index} of $\pi$ in the Witt tower $\left\{G'_{n'}\right\}_{n'\geq 0}$. By \cite[Chap.3, lemme IV.2]{MVW}, there exists $n'$ such that $\Theta_{G'_{n'}}(\pi)\neq 0$, hence $n'(\pi)$ is well-defined. In this section we compute the first occurrence of cuspidal unipotent and $\theta$-representations for symplectic-odd orthogonal dual pairs. We begin by reviewing Lusztig's results \cite{L1} on the cuspidal unipotent representations of symplectic and orthogonal groups.

\begin{theorem} \label{fun} The following groups

(i) $U_n$, $n=k(k+1)/2$,

(ii) $Sp_{2n}$, $n=k(k+1)$,

(iii) $SO_{2n+1}$, $n=k(k+1)$,

(iv) $SO^{\epsilon}_{2n}$, $n=k^2$, $\epsilon=\rm{sgn}(-1)^k$
\\are the only groups in their respective Lie families which possess a cuspidal unipotent representation. In each case, the specified group $G$ has a unique cuspidal unipotent representation.
\end{theorem}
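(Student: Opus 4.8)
My plan is to reduce Theorem~\ref{fun} to a combinatorial count, using two standard inputs: Lusztig's parametrization of the unipotent representations of classical groups \cite{L1}, and the translation of Harish-Chandra induction into operations on that parametrization.

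First I would recall that the unipotent representations of $U_n$ are indexed by the partitions of $n$, whereas those of $Sp_{2n}$, $SO_{2n+1}$ and $SO^\epsilon_{2n}$ are indexed by \emph{symbols} of rank $n$, with defect odd in the symplectic and odd-orthogonal cases, defect $\equiv 0 \pmod 4$ for $SO^+_{2n}$, and defect $\equiv 2 \pmod 4$ for $SO^-_{2n}$; here a degenerate symbol (one with two equal rows) accounts for a pair of representations. The second input, due to Fong--Srinivasan and Lusztig, is that Harish-Chandra induction from a Levi subgroup carrying a general linear factor of rank $d$ corresponds, on the partition side, to adding a border strip of size $d$, and on the symbol side to adding a $d$-hook or a $d$-cohook. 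Consequently an irreducible unipotent representation is cuspidal exactly when its parametrizing partition is a $2$-core (in the unitary case), respectively when its parametrizing symbol admits no removable hook and no removable cohook.

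Granting this, the unitary case is immediate: the $2$-cores are precisely the staircase partitions $\delta_k = (k, k-1, \ldots, 2, 1)$, whose size is $k(k+1)/2$, and there is at most one $2$-core of a prescribed size; hence $U_n$ has a cuspidal unipotent representation precisely when $n = k(k+1)/2$, and then it is unique. For the symplectic and orthogonal groups I would show that a symbol admitting neither a removable hook nor a removable cohook must be the ``staircase symbol'' whose top row is $\{0, 1, 2, \ldots, d-1\}$ and whose bottom row is empty, with $d$ equal to its defect: a gap in either row produces a removable hook, and two nonempty rows in the reduced representative produce a removable cohook. A direct computation of the rank of this symbol gives $m(m+1)$ when $d = 2m+1$ and $m^2$ when $d = 2m$; and since for $SO^\epsilon_{2n}$ the residue of $d$ modulo $4$ prescribes $\epsilon$, one obtains $\epsilon = (-1)^m = (-1)^k$ in case (iv). Because this symbol is non-degenerate for $d \ge 2$ and is visibly the only symbol of its rank with the no-hook/no-cohook property, cases (ii), (iii), (iv) --- and the uniqueness assertion in each --- follow, with the customary care in the very small rank cases.

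I expect the main obstacle to be the second input: making precise the combinatorial dictionary for Harish-Chandra induction of unipotent representations, in particular the role of cohooks, which reflects induction through the non-split Levi subgroups, and, in type $D$, the bookkeeping relating $O_{2n}$ to $SO_{2n}$ and degenerate to non-degenerate symbols. Once that dictionary is in place, the remainder is the elementary combinatorics of hook- and cohook-free partitions and symbols, together with the rank computations indicated above.
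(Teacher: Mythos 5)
The paper does not actually prove Theorem~\ref{fun}: it is quoted from Lusztig \cite{L1} (the text introduces it with ``We begin by reviewing Lusztig's results\dots''), so there is no internal argument to measure yours against. Your reconstruction via the symbol calculus is the standard route to Lusztig's classification, and your numerology is correct: the $2$-cores are exactly the staircase partitions of size $k(k+1)/2$; the reduced symbol with top row $\{0,1,\dots,d-1\}$ and empty bottom row has rank $m(m+1)$ when $d=2m+1$ and $m^2$ when $d=2m$; and the residue of $d$ modulo $4$ gives $\epsilon=\mathrm{sgn}(-1)^k$ in case (iv). The point that degenerate symbols of positive rank always carry a removable $1$-hook also disposes of the type-$D$ subtlety that a degenerate symbol labels two representations.

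The one genuine flaw is your cuspidality criterion. Harish-Chandra cuspidality of a unipotent representation is detected by removable $1$-hooks \emph{alone}: cohooks do not correspond to parabolic induction from any Levi of a parabolic subgroup (they govern Lusztig induction from non-split subgroups, equivalently the $\ell$-block theory at ``unitary'' primes), and the test ``no removable hook and no removable cohook'' is not literally tenable. Indeed the staircase symbol with top row $\{0,1,\dots,d-1\}$ and empty bottom row admits a removable $1$-cohook as soon as $d\geq 2$ (delete $1$ from the top row and insert $0$ into the empty bottom row), so your criterion would wrongly conclude that $Sp_{4}$ has no cuspidal unipotent representation, contradicting the theorem. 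The repair is immediate and actually simplifies your argument: a reduced symbol with no removable $1$-hook must have each row equal to an initial segment $\{0,\dots,r-1\}$, and reducedness (the two rows not both containing $0$) then forces one row to be empty, so the hook condition by itself already pins down the staircase symbol of each admissible defect, and at most one such symbol exists in each rank. Delete the cohook clause, and note likewise that for $U_n$ only the $GL_1(q^2)$ factors of split Levi subgroups are relevant, so the elementary operation is adding a domino rather than a border strip of size $d$ (whence $2$-cores). With those corrections your outline is a faithful account of the standard proof of the cited theorem.
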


By Lemma \ref{lem3.3}, we have the following immediate consequence for cuspidal $\theta$-representations.

\begin{corollary} \label{ctheta}
The following groups

(i) $U_n$, $n=k(k+1)/2$,

(ii) $SO_{2n+1}$, $n=k(k+1)$,

(iii) $SO^{\epsilon}_{2n}$, $n=k^2$, $\epsilon=\rm{sgn}(-1)^k$
\\are the only groups in their respective Lie families which possess a cuspidal $\theta$-representation. In each case, the specified group $G$ has a unique cuspidal $\theta$-representation.
\end{corollary}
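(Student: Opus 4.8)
The statement to prove is Corollary \ref{ctheta}: combining Theorem \ref{fun} (Lusztig's classification of cuspidal unipotent representations) with Lemma \ref{lem3.3}.

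The plan is to deduce the corollary directly from Theorem \ref{fun} and Lemma \ref{lem3.3}. First I would recall that Lemma \ref{lem3.3} establishes that for any finite classical group $G$ that is not symplectic, the map $\pi \mapsto \chi_G\pi$ sends a cuspidal unipotent representation to a cuspidal $\theta$-representation. Conversely, by Lemma \ref{lem2.3}, every $\theta$-representation of such a $G$ is of the form $\chi_G\pi$ with $\pi$ unipotent, and by Lemma \ref{lem2.2} (since $\chi_G(u)=1$ on unipotent elements), $\chi_G\pi$ is cuspidal if and only if $\pi$ is cuspidal. Hence $\pi \mapsto \chi_G\pi$ is a bijection between the cuspidal unipotent representations of $G$ and the cuspidal $\theta$-representations of $G$, for $G$ any of $U_n$, $SO_{2n+1}$, or $SO^\epsilon_{2n}$.

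Given this bijection, the corollary is immediate: a group $G$ in one of the Lie families $U_n$, $SO_{2n+1}$, $SO^\epsilon_{2n}$ possesses a cuspidal $\theta$-representation if and only if it possesses a cuspidal unipotent representation, and in that case the number of each coincides. Applying Theorem \ref{fun} parts (i), (iii), (iv) — note that the symplectic case (ii) of Theorem \ref{fun} is excluded here precisely because $Sp_{2n}$ has no nontrivial linear character $\chi_G$, so the bijection argument does not apply and $\theta$-representations of $Sp_{2n}$ must be treated separately (as indeed they are later, via Lusztig's bijection) — we read off exactly that $U_n$ with $n=k(k+1)/2$, $SO_{2n+1}$ with $n=k(k+1)$, and $SO^\epsilon_{2n}$ with $n=k^2$, $\epsilon=\rm{sgn}(-1)^k$, are the only groups in their families with a cuspidal $\theta$-representation, each having a unique one.

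There is no real obstacle here; the only point requiring a moment's care is the surjectivity direction of the bijection — namely that \emph{every} cuspidal $\theta$-representation arises as $\chi_G\pi$ for $\pi$ cuspidal unipotent, which follows from Lemma \ref{lem2.3}(ii) together with Lemma \ref{lem2.2}. Once that is noted, the proof is a one-line invocation of Theorem \ref{fun}.

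\begin{proof}
By Lemma \ref{lem2.3}, for $G = U_n$, $SO_{2n+1}$ or $SO^\epsilon_{2n}$ the map $\pi \mapsto \chi_G\pi$ is a bijection between the unipotent representations and the $\theta$-representations of $G$. By Lemma \ref{lem3.3} (and its converse, since $\chi_G(u) = 1$ on unipotent elements $u$ by Lemma \ref{lem2.2}, so $\chi_G\pi(u) = \pi(u)$), this bijection restricts to a bijection between cuspidal unipotent representations and cuspidal $\theta$-representations. The claim now follows at once from parts (i), (iii) and (iv) of Theorem \ref{fun}.
\end{proof}
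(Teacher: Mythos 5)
Your proof is correct and follows the same route as the paper: the paper derives the corollary as an immediate consequence of Lemma \ref{lem3.3} together with the observation (stated just before that lemma) that $\pi\mapsto\chi_G\pi$ is a bijection between unipotent and $\theta$-representations preserving cuspidality, and then reads off the list from Theorem \ref{fun}. Your extra care about the surjectivity direction (via Lemma \ref{lem2.3}(ii) and Lemma \ref{lem2.2}) is exactly the point the paper leaves implicit.
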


 We define a $\theta$-representation of $O_{2n+1}$, which is nonconnected, to be a constituent of $\rm{Ind}^{O_{2n+1}}_{SO_{2n+1}}\pi$ for a $\theta$-representation $\pi$ of $SO_{2n+1}$. Let $\lambda'_k$ stand for the unique cuspidal unipotent representation of $SO_{2n+1}$, $n=k(k+1)$. Then
\[
\lambda'_{k,\chi}:=\chi_{SO_{2n+1}}\lambda'_k
\]
is the unique cuspidal $\theta$-representation of $SO_{2n+1}$, and we have
  \[
  \rm{Ind}^{O_{2n+1}}_{SO_{2n+1}}\lambda'_{k,\chi}=\lambda'^+_{k,\chi}\oplus \lambda'^{-}_{k,\chi}
  \]
  where $\lambda'^+_{k,\chi}:= \lambda'_{k,\chi}\otimes 1$ and $\lambda'^-_{k,\chi}:=\lambda'_{k,\chi}\otimes\rm{sgn}$ are cuspidal  $\theta$-representations of $O_{2n+1}\cong SO_{2n+1}\times\{\pm I\}$.
Now we can give the first main result of this section, which is stated as Theorem \ref{thm1.1} in the Introduction.

  \begin{theorem}\label{thm3.7}
Let $\lambda_k$ be the unique cuspidal unipotent representation of $Sp_{2n}$, $n=k(k+1)$. Then the first occurrence index $n'^{\epsilon}(\lambda_k)$  in the Witt tower $\left\{O^\epsilon_{2n'+1}\right\}_{n'\geq 0}$ is equal to $n$, and the theta lifting $\Theta_{O^\epsilon_{2n+1}}(\lambda_k)$  equals $\lambda'^+_{k,\chi}$.
\end{theorem}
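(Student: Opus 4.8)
The plan is to combine Theorem \ref{pan}, Theorem \ref{uni}, Theorem \ref{FO}, and the conservation relation. First I would establish the \emph{upper bound} $n'^\epsilon(\lambda_k)\le n$. Since $\lambda_k$ is unipotent, Theorem \ref{uni}(i) tells us that whenever $\lambda_k\otimes\pi'$ occurs in $\omega^\epsilon_{n,n'}$ we must have $n'\ge n$; so it remains to produce a nonzero lift at level exactly $n'=n$. For this I would take the $k=n$ term in the sum of Theorem \ref{pan} (applied with $n'=n$), which contributes, up to the orthogonal spinor twist $1\otimes\chi_{SO_{2n+1}}$, a term of the form $\frac{1}{|W_n|}\sum_{v\in W_n}\sum_{\theta\in\mathrm{Irr}(T_v)}\epsilon_v R^{Sp_{2n}}_{T_v,\theta}\otimes R^{SO_{2n+1}}_{T_v,\theta}$; since the cuspidal unipotent $\lambda_k$ does appear in the uniform projection $\omega^{\#,\epsilon}_{n,n}$ (its unipotent part is nonzero by the argument in the proof of Theorem \ref{uni}), we conclude $\Theta_{O^\epsilon_{2n+1}}(\lambda_k)\ne 0$, hence $n'^\epsilon(\lambda_k)=n$.

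Next I would identify the lift. By Theorem \ref{FO}, since $\lambda_k$ is cuspidal and $n'=n$ is the first occurrence, $\Theta_{O^\epsilon_{2n+1}}(\lambda_k)$ is irreducible cuspidal; by Theorem \ref{uni}(iii) (the $n=n'$ case, where $\pi$ unipotent $\iff\pi'$ a $\theta$-representation) it is moreover a cuspidal $\theta$-representation of $O^\epsilon_{2n+1}$. By Corollary \ref{ctheta} and the discussion of $\mathrm{Ind}^{O_{2n+1}}_{SO_{2n+1}}$ preceding the statement, the only cuspidal $\theta$-representations of $O^\epsilon_{2n+1}$ are $\lambda'^+_{k,\chi}$ and $\lambda'^-_{k,\chi}$ (the two extensions of $\lambda'_{k,\chi}=\chi_{SO_{2n+1}}\lambda'_k$). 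So it remains to rule out $\lambda'^-_{k,\chi}$, i.e. to pin down the central character: $\lambda_k\otimes\lambda'^\pm_{k,\chi}$ occurring in $\omega^\epsilon_{n,n}$ forces the central element $-I\in O^\epsilon_{2n+1}$ to act by the scalar dictated by the Weil representation, and a direct computation of $\omega^\epsilon_{n,n}(1,-I)$ (or of the sign $\epsilon_v$ bookkeeping in the $k=n$ term of Theorem \ref{pan}, which carries the dependence on the $\pm$ extension) shows the trivial extension $\lambda'^+_{k,\chi}=\lambda'_{k,\chi}\otimes 1$ is selected.

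Finally I would invoke the conservation relation: the sum of the first occurrence indices of $\lambda_k$ in the two Witt towers ${\bf O}^+_{\mathrm{odd}}$ and ${\bf O}^-_{\mathrm{odd}}$ is a fixed constant depending only on the tower and on $n$, which here forces $n'^+(\lambda_k)=n'^-(\lambda_k)=n$ simultaneously; this is consistent with the upper/lower bound argument above and confirms that the first occurrence is indeed $n$ in \emph{each} tower (it also explains why the statement is symmetric in $\epsilon$). The main obstacle, I expect, is the second step: extracting the precise central character of the lift. The abstract machinery (Theorems \ref{pan}, \ref{uni}, \ref{FO}) cleanly gives that the lift is a cuspidal $\theta$-representation and hence equals $\lambda'^+_{k,\chi}$ or $\lambda'^-_{k,\chi}$, but distinguishing the two requires tracking the $\epsilon_v$ signs in Pan's formula together with the explicit value of the Weil character at $(1,-I)$ — a computation that is genuinely sensitive to the normalization of $\omega_{Sp_{2n}}$ and to the type $\epsilon$ of the orthogonal space. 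Everything else should reduce to the cited results plus the uniqueness statements in Theorem \ref{fun} and Corollary \ref{ctheta}.
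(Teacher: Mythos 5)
Your skeleton for the identification step (first occurrence $\Rightarrow$ cuspidal by Theorem \ref{FO} $\Rightarrow$ a $\theta$-representation by Theorem \ref{uni} $\Rightarrow$ one of the two extensions of $\lambda'_{k,\chi}$ by Corollary \ref{ctheta} $\Rightarrow$ pin down the central character) matches the paper's. But your argument for the occurrence of $\lambda_k$ at level $n'=n$ has a genuine gap, and it is exactly the step where the paper uses an ingredient you never invoke. You propose to read off $\Theta_{O^\epsilon_{2n+1}}(\lambda_k)\neq 0$ from the relevant term of Pan's formula, citing ``the argument in the proof of Theorem \ref{uni}.'' That argument runs in the opposite direction: it \emph{assumes} $\pi\otimes\pi'$ occurs in $\omega_{n,n'}$ and deduces that its uniform projection is nonzero, thereby constraining $\pi'$. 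To get occurrence you would need to pass from a nonzero coefficient in $\omega^{\#}_{n,n}$ back to a positive multiplicity in $\omega_{n,n}$ itself, and this fails precisely because $\lambda_k$, being cuspidal unipotent, is not a uniform function: the multiplicity $(\omega_{n,n},\lambda_k\otimes\sigma)$ differs from $(\omega^{\#}_{n,n},\lambda_k\otimes\sigma)$ by a non-uniform contribution that Pan's formula cannot see. The paper closes this gap with Proposition \ref{howe} (Howe's result that every irreducible representation of $Sp_{2n}$ occurs in $\omega^+_{n,n}\oplus\omega^-_{n,n}$), which yields occurrence at level $n$ in at least one tower ${\bf O}^{\epsilon_0}_{\mathrm{odd}}$. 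Consequently the conservation relation is not the ``consistency check'' you describe but the essential tool for the other tower: $n'^{-\epsilon_0}(\lambda_k)=2n-n'^{\epsilon_0}(\lambda_k)=n$.

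Your second acknowledged obstacle, the central character, is also resolved differently from what you anticipate. No computation of $\omega^\epsilon_{n,n}(1,-I)$ or bookkeeping of the $\epsilon_w$ signs is needed: the paper's Proposition \ref{pi-1} shows by induction (via Jacquet functors and the Deligne--Lusztig character formula) that every unipotent representation of $Sp_{2n}$ is trivial at $-I$, and since $\pi(-I)=\pi'(-I)$ for any $\pi\otimes\pi'$ in the Weil representation, the lift of $\lambda_k$ has trivial central character and must be $\lambda'^+_{k,\chi}$ rather than $\lambda'^-_{k,\chi}$. Your instinct that this is a delicate point is sound, but as stated your proposal leaves both of its load-bearing steps — occurrence at level $n$ and the central character — unproved.
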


The proof of this theorem invokes three ingredients. The first one is the following information which can be read off of \cite{H}.

\begin{proposition} \label{howe}
Every irreducible representation of $Sp_{2n}$ is contained in $\omega^+_{n,n}\oplus\omega^-_{n,n}$, where $\omega^\epsilon_{n,n'}=\omega_{Sp_{2n},O^\epsilon_{2n'+1}}.$
\end{proposition}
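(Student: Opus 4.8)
The plan is to count. Since $O^\epsilon_{2n+1}$ is a product of two Witt towers and the total dimension of $\omega_{Sp_{2n}}$ is $q^n$, we have $\dim\omega^+_{n,n'}+\dim\omega^-_{n,n'}$ controlled by $q^{\dim(V\otimes V')}$ in each case; more precisely, for a fixed odd orthogonal space $V'$ of dimension $2n'+1$, the dimension of $\omega^\epsilon_{n,n'}$ as a representation of $Sp_{2n}\times O^\epsilon_{2n'+1}$ equals $q^{n(2n'+1)}$. The key point is that $\psi$ and $\psi_a$ (the character $x\mapsto\psi(ax)$) for $a$ a non-square give the two inequivalent forms, and the two Weil representations $\omega^+_{n,n'}$ and $\omega^-_{n,n'}$ for $Sp_{2n}$ obtained from the two orthogonal spaces of odd dimension $2n'+1$ differ precisely by this twist. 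The total sum $\dim\omega^+_{n,n'}+\dim\omega^-_{n,n'}$, restricted to $Sp_{2n}$, accounts for the full regular-representation-like growth, and for $n'=n$ this is large enough to contain every irreducible representation of $Sp_{2n}$.

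The concrete steps I would carry out are as follows. First, recall (from \cite{G} or \cite{H}) the explicit model of $\omega_{Sp_{2n}}$ realized on the space of functions on a Lagrangian; restricting along a dual pair $(Sp_{2n},O^\epsilon_{2n'+1})$ where the orthogonal space has dimension $2n'+1$, one realizes $\omega^\epsilon_{n,n'}$ on $\mathcal{S}(V'^{\oplus n})$ or an equivalent Schr\"odinger model, of dimension $q^{n(2n'+1)}$. Second, decompose $\omega^\epsilon_{n,n}=\bigoplus_\pi \pi\otimes\Theta_{O^\epsilon_{2n+1}}(\pi)$ over irreducibles $\pi$ of $Sp_{2n}$, so that $\pi$ is contained in $\omega^\epsilon_{n,n}$ iff $\Theta_{O^\epsilon_{2n+1}}(\pi)\neq 0$. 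Third, invoke Howe's computation \cite{H}: for the dual pair in the stable range (here $\dim V' = 2n+1 \geq 2n+1$, which is exactly the boundary case that still works for the \emph{pair} of towers together), every irreducible $\pi$ of $Sp_{2n}$ has nonzero theta lift to at least one of $O^+_{2n+1}$ or $O^-_{2n+1}$. The cleanest way to extract this is the identity $\omega^+_{n,n'}\oplus\omega^-_{n,n'}$ restricted to $Sp_{2n}$ equals (up to the character $\chi$ bookkeeping) a sum that dominates the regular representation once $n'\geq n$, because the ``missing'' representations in one Witt tower are exactly those that appear in the other — this dichotomy for odd orthogonal towers is precisely what \cite{H} establishes.

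I expect the main obstacle to be pinning down the precise range statement: one must verify that $n'=n$ (and not $n'=n+1$ or larger) already suffices for the \emph{union} over $\epsilon=\pm$ to exhaust $\mathrm{Irr}(Sp_{2n})$. For a single Witt tower the first-occurrence index can be as large as $n$ (achieved by the cuspidal unipotent $\lambda_k$, as Theorem \ref{thm3.7} will show), so one cannot drop below $n'=n$; the content is that no irreducible has first-occurrence index exceeding $n$ in \emph{both} towers simultaneously. This follows from Howe's analysis of the doubled dual pair or, equivalently, from the conservation relation bound $n'^+(\pi)+n'^-(\pi)\leq 2n+$ (a small constant depending on normalization), forcing $\min(n'^+(\pi),n'^-(\pi))\leq n$. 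So the remaining work is to quote the correct form of this bound from \cite{H} and check the constants, after which the Proposition is immediate: if $\pi$ is any irreducible of $Sp_{2n}$, then $\Theta_{O^\epsilon_{2n+1}}(\pi)\neq 0$ for $\epsilon$ the tower realizing the smaller first occurrence, hence $\pi\otimes(\text{something})\in\omega^\epsilon_{n,n}\subseteq\omega^+_{n,n}\oplus\omega^-_{n,n}$.
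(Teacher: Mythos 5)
The paper offers no argument for this proposition at all: it is introduced as ``information which can be read off of \cite{H}'', so your ultimate reliance on Howe's preprint is exactly what the authors do, and to that extent the approaches coincide. But the independent justifications you wrap around the citation do not hold up and should not be presented as a proof. First, the dimension count is vacuous: knowing that $\dim\omega^+_{n,n}+\dim\omega^-_{n,n}=2q^{n(2n+1)}$ is comparable to $|Sp_{2n}(q)|$ says nothing about whether a given irreducible occurs --- a representation of dimension larger than $|G|$ can still miss irreducibles, so ``large enough to contain every irreducible'' is not an argument. Second, and more seriously, your alternative route via ``the conservation relation bound $n'^+(\pi)+n'^-(\pi)\leq 2n+c$, forcing $\min(n'^+(\pi),n'^-(\pi))\leq n$'' is only available for \emph{cuspidal} $\pi$: Proposition \ref{cons} (from \cite{P1}) is stated for cuspidal representations, and the authors explicitly remark that conservation-type relations fail in general for noncuspidal representations of finite dual pairs (this is one of the observations they draw from \cite{Ch} at the end of the paper). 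So the conservation route establishes the proposition only for cuspidal representations, and extending it to all irreducibles would need a separate induction-principle argument that you do not supply --- and which would anyway have to import the very content of \cite{H} you are trying to avoid. The defensible version of your write-up is the bare citation: quote \cite{H} for the dichotomy that every irreducible of $Sp_{2n}$ has nonzero theta lift to $O^+_{2n+1}$ or $O^-_{2n+1}$, which is precisely the proposition.
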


The second ingredient is the following so-called conservation relation for cuspidal representations given in \cite{P1}.

\begin{proposition}\label{cons}
(i) Let $\pi$ be an irreducible cuspidal representation of $Sp_{2n}$ and $n'^\epsilon(\pi)$ be the first occurrence index of $\pi$ in the Witt tower
${\bf O}^\epsilon_\rm{odd}=\left\{O^\epsilon_{2n'+1}\right\}_{n'\geq 0}$. Then
\[
n'^+(\pi)+n'^-(\pi)=2n.
\]
(ii) Let $\pi'$ be an irreducible cuspidal representation of $O^\epsilon_{2n'+1}$ and $n(\pi')$ be the first occurrence index of $\pi'$ in the Witt tower
${\bf Sp}=\{Sp_{2n}\}_{n\geq 0}$. Then
\[
n(\pi')+n(\pi'\otimes\rm{sgn})=2n'+1.
\]
\end{proposition}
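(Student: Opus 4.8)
The plan is to derive each identity as the combination of an upper and a lower bound on the sum of the two relevant first occurrence indices, the equality being forced once both bounds meet. I would prove (i) in detail and obtain (ii) by running the same mechanism with the two members of the dual pair interchanged: on the orthogonal side there is a single symplectic tower, but the two characters $1$ and $\mathrm{sgn}$ of $O_{2n'+1}=SO_{2n'+1}\times\{\pm I\}$ play exactly the role that the two odd orthogonal towers $\epsilon=\pm$ play in (i). Throughout I use that first occurrence is well defined and that occurrence persists up a tower (\cite{MVW}), so that $n'^{\pm}(\pi)$, $n(\pi')$, $n(\pi'\otimes\mathrm{sgn})$ are genuine thresholds, and that at first occurrence the lift is irreducible cuspidal, with first occurrence reciprocal between the two members of the pair (Theorem \ref{FO}).

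The engine is the doubling see-saw. For (i), writing $V$ for the $2n$-dimensional symplectic space and $U^\epsilon$ for an odd orthogonal space in tower $\epsilon$, I would double the symplectic variable, forming the split space $V\oplus(-V)$ of dimension $4n$ and using the see-saw
\[
\begin{matrix}
Sp(V\oplus(-V)) & & O(U^\epsilon)\times O(U^\epsilon)\\
\cup & & \cup\\
Sp(V)\times Sp(-V) & & \Delta\, O(U^\epsilon)
\end{matrix}
\]
inside $Sp\big((V\oplus(-V))\otimes U^\epsilon\big)$. Feeding $\pi\otimes\pi^\vee$ through this see-saw converts the statement that $\pi$ occurs in tower $\epsilon$ at level $m$ into the statement that $\pi\otimes\pi^\vee$ occurs in the restriction to $Sp(V)\times Sp(V)$ of $\Theta_{Sp_{4n}}(\mathbf{1}_{O^\epsilon_{2m+1}})$, the lift of the trivial representation via the dual pair $(Sp_{4n},O^\epsilon_{2m+1})$; this lift is a constituent of the degenerate principal series of $Sp_{4n}$ attached to the Siegel parabolic, with the two towers $\epsilon=\pm$ producing the two characters of the Levi $GL_{2n}$. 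Since $V\oplus(-V)$ is split, this object lies in the stable range and its restriction is computable; reading it back through the see-saw yields the upper bound $n'^{+}(\pi)+n'^{-}(\pi)\le 2n$. A convenient finite field simplification is that, $\pi$ being cuspidal, the multiplicities depend only on uniform projections, $\langle\omega^\epsilon_{n,m},\pi\rangle_{Sp_{2n}}=\langle(\omega^\epsilon_{n,m})^{\#},\pi^{\#}\rangle_{Sp_{2n}}$, so the anchor computations can be carried out with Pan's formula (Theorem \ref{pan}) and the orthogonality of Deligne--Lusztig characters.

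The lower bound $n'^{+}(\pi)+n'^{-}(\pi)\ge 2n$ is the main obstacle. Here one must show that $\pi$ cannot occur prematurely in both towers at once: were $n'^{+}+n'^{-}<2n$, the see-saw would produce a nonzero $\Delta O(U^\epsilon)$-period of the doubled Weil representation below the dimension at which such a period can survive, equivalently a copy of $\pi\otimes\pi^\vee$ in a degenerate principal series constituent that is too small to contain it. The two towers are genuinely linked at this point, through the intertwining relation (the finite field counterpart of the doubling functional equation) between the two degenerate principal series of $Sp_{4n}$ indexed by $\epsilon=\pm$; it is this relation, exchanging the critical level $m$ of one tower with the complementary level of the other, that forces $n'^{+}$ and $n'^{-}$ to be complementary rather than merely individually bounded. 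The crux is thus a nonvanishing statement at the critical dimension, and cuspidality of $\pi$ is what makes it tractable, since it localizes the relevant class functions and allows a direct comparison with the explicitly known lift of the anchor character.

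Finally, for (ii) I would run the same doubling see-saw with the roles of $V$ and $U$ interchanged, fixing the orthogonal space $U$ of dimension $2n'+1$ and doubling it into the split space $U\oplus(-U)$ of dimension $4n'+2$; the relevant object is now $\Theta_{O_{4n'+2}}(\mathbf{1}_{Sp_{2n}})$, a constituent of the degenerate principal series attached to the Siegel parabolic with Levi $GL_{2n'+1}$. The two representations $\pi'$ and $\pi'\otimes\mathrm{sgn}$, which restrict to a common irreducible representation of $SO_{2n'+1}$ but differ on the central involution $-I$, take over the role of the two orthogonal towers; the see-saw collapses the pair of symplectic first occurrences into a single lift to the split group, and the analogous stable-range and nonvanishing inputs give $n(\pi')+n(\pi'\otimes\mathrm{sgn})=2n'+1$. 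The only bookkeeping beyond (i) is tracking the central character on $\{\pm I\}$, which separates $\pi'$ from $\pi'\otimes\mathrm{sgn}$ and is responsible for the odd total $2n'+1=\dim U$.
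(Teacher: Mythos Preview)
The paper does not prove Proposition~\ref{cons}; it is quoted as an input from \cite{P1} (see the sentence introducing the proposition). So there is no in-paper proof to compare against, and your task was really to supply an independent argument.

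Your overall framework---the doubling see-saw, with the upper bound coming from the restriction of the lift of the trivial representation to $Sp_{4n}$ (resp.\ $O_{4n'+2}$) and the lower bound from a nonvanishing/dichotomy statement linking the two towers---is the standard Kudla--Rallis strategy and is indeed how results of this type are proved, including in \cite{P1} and \cite{SZ}. As a roadmap it is correct.

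However, one step is wrong and another is a genuine gap. First, the assertion that ``$\pi$ being cuspidal, the multiplicities depend only on uniform projections, $\langle\omega^\epsilon_{n,m},\pi\rangle_{Sp_{2n}}=\langle(\omega^\epsilon_{n,m})^{\#},\pi^{\#}\rangle_{Sp_{2n}}$'' is false: cuspidality does not force a representation to be uniform (the cuspidal unipotent representation of $Sp_{2k(k+1)}$ for $k\ge 1$ is a standard counterexample), so you cannot reduce the anchor computations to Theorem~\ref{pan}. Pan's decomposition describes only $\omega^{\#}$, and the difference $\omega-\omega^{\#}$ can and does pair nontrivially with cuspidal characters; this is precisely why the paper needs the conservation relation as an \emph{external} input rather than deducing it from Theorem~\ref{pan}. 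Second, your lower bound is only a description of what must be shown (``a nonvanishing statement at the critical dimension''); you have not indicated how to establish it over a finite field. In the $p$-adic setting this is the hard part of \cite{SZ}, and over finite fields \cite{P1} handles it by a direct analysis specific to the cuspidal case. Without that, the sketch does not close.
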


A few remarks are in order. In the literature the conservation relations are usually formulated  in terms of the dimensions of vector spaces, but in this paper we will use the above form for convenience. Here we only recalled the statement for symplectic-odd orthogonal dual pairs and we refer the readers to \cite{P1} for other cases. In the world of $p$-adic local fields, the conservation relations have been established in full generality in \cite{SZ}.  We should also mention that the standard formulation of conservation relations does not seem to hold in general for noncuspidal representations of finite dual pairs, and we will give some explanations at the end of the paper.

The last ingredient we need is that the action of $-I$ under a unipotent representation is trivial. This fact might be known but we could not find a reference, so we include a detailed proof for completeness. It can be used to determine whether the theta lifting of a unipotent representation takes place in the even or odd part of the Weil representation, but we will not need this result.

\begin{proposition}\label{pi-1}
(i) If $\pi$ is a unipotent representation of $GL_{n}$, $U_n$, $Sp_{2n}$ or $SO^\epsilon_{2n}$, then $\pi(-I)$ is trivial.

(ii)  If $\pi$ is a unipotent representation of $Sp_{2n}$ and $\pi\otimes \pi'$ occurs in the Weil representation $\omega^\epsilon_{n,n'}$ of
$(Sp_{2n},O^\epsilon_{2n'+1})$, then $\pi'(-I)$ is trivial.
\end{proposition}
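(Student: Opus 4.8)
The plan is to handle the two parts separately, starting from the observation that being unipotent is insensitive to twisting by $\chi_G$ in the sense of Lemma \ref{lem2.3}, combined with the evaluation $\chi_G(u)=1$ on unipotent elements from Lemma \ref{lem2.2}. For part (i), the key point is that if $\pi$ is a unipotent representation of one of the listed groups, then $\pi(-I)$ depends only on the central character of $\pi$, and the central character of a unipotent representation is forced to be trivial. One way to see this: $-I$ is a semisimple central element, so by the Jordan decomposition formula \eqref{dl} one can relate the value of any $R^G_{T,1}$ at $-I$ to Green functions, but more directly, the central character of an irreducible constituent of $R^G_{T,1}$ is determined by the geometric (Lusztig) series it belongs to, namely the series attached to the trivial semisimple class $s=1$ in the dual group $G^*$. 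Since the central character of $\pi\in\mathcal{E}(G,(s))$ is given by evaluating $s$ (viewed via $Z(G)\hookrightarrow$ something dual) — and here $s=1$ — the central character is trivial, so in particular $\pi(-I)=\pi(-I)/\pi(I)=1$. I would phrase this using the standard fact (e.g. from \cite{C} or \cite{L2}) that representations in the principal (unipotent) series have trivial central character, and note that $-I$ lies in the center of each of $GL_n$, $U_n$, $Sp_{2n}$, $SO^\epsilon_{2n}$.

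Alternatively, and perhaps more self-containedly in the spirit of this paper, I would argue via uniform projection: write $\pi^\#$ as a combination of $R^G_{T,1}$'s (the uniform part carries the relevant information), and evaluate at the central semisimple element $-I$ using \eqref{dl}, which gives $R^G_{T,1}(-I)=\frac{1}{|C^0(-I)|}\sum_{g:\,(-I)^g\in T}Q^{C^0(-I)}_{{}^gT}(1)$; since $-I$ is central, $C^0(-I)=G$ and $(-I)^g=-I\in T$ always, so this is $\frac{|G|}{|T|}\cdot\frac{1}{|G|}|T|\cdot(\text{something})$ — more precisely $Q^G_{{}^gT}(1)=\dim R^G_{T,1}$ up to sign, and comparing with $R^G_{T,1}(I)$ shows the ratio is $1$. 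Hence every $R^G_{T,1}$, and therefore every unipotent representation, takes the same value at $-I$ and at $I$ up to the dimension; the cleanest formulation is that $R^G_{T,1}(-I)/R^G_{T,1}(I)=1$ because the sum defining the two is literally identical (the condition $s^g\in T$ is vacuous and $\theta=1$ so the character value is $1$ in both cases, and the Green function $Q$ is evaluated at the trivial unipotent element $1$ in both). Then linearity plus the fact that $-I$ acts by a scalar on the irreducible $\pi$ forces $\pi(-I)=1$.

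For part (ii), I would use part (i) together with the structure of the Weil representation restricted to the dual pair. The element $-I\in Sp_{2n}$ maps into $Sp_W$ (where $W=V\otimes V'$, $\dim V=2n$) as $-I_W$, and likewise $-I'\in O^\epsilon_{2n'+1}$ maps to $-I_W$ as well — that is, the images of $-I$ and $-I'$ in $Sp(W)$ coincide (both act as $-1$ on $W=V\otimes V'$ since $-I$ acts as $-1$ on $V$ and $+1$ on $V'$, while $-I'$ acts as $+1$ on $V$ and $-1$ on $V'$; note $\dim V'=2n'+1$ is odd so $-I'\in O_{2n'+1}$ acts as $-1$ on all of $V'$). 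Therefore in the Weil representation $\omega_{Sp(W)}$, the operator $\omega(-I,1)$ equals $\omega(1,-I')$ up to possibly a sign (both equal $\omega(-I_W)$). Consequently on any irreducible constituent $\pi\otimes\pi'$ of $\omega^\epsilon_{n,n'}$ we get $\pi(-I)\cdot\mathrm{id} = \pi'(-I')\cdot\mathrm{id}$ (the central $-I_W$ acts by one and the same scalar), so $\pi(-I)=\pi'(-I')$. Since $\pi$ is unipotent, part (i) gives $\pi(-I)=1$, hence $\pi'(-I')=1$.

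\textbf{Main obstacle.} The part of the argument that requires the most care is pinning down the exact scalar by which $-I_W$ acts in the Weil representation, and checking that the identification ``$-I\mapsto -I_W \mapsfrom -I'$'' really does give $\pi(-I)=\pi'(-I')$ without an extra sign discrepancy coming from the metaplectic cover or from the choice of additive character $\psi$. The cleanest way around this is to observe that $-I_W$ is a single element of $Sp(W)$ and $\omega_{Sp(W)}(-I_W)$ is one well-defined operator (the Weil representation of the finite symplectic group is an honest representation, no cover needed over a finite field), so whatever scalar $c$ it acts by on the $\pi\otimes\pi'$-isotypic piece, we get $c = \pi(-I) = \pi'(-I')$ simultaneously; no separate normalization is needed. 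For part (i) the potential subtlety is justifying that the central character of a unipotent representation is trivial for $SO^\epsilon_{2n}$ as well (where $-I$ is central), but this again follows from the geometric-series description since the dual group's trivial semisimple class yields trivial central character; I would cite \cite{C} or \cite{L2} for this standard fact rather than reprove it.
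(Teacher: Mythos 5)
Your proposal is correct, but it reaches part (i) by a genuinely different route from the paper. The paper proves (i) by induction on $n$: it first notes the identity $R^G_{T,1}(-I)=R^G_{T,1}(I)$ from \eqref{dl}, then handles noncuspidal unipotent $\pi$ by passing to a Jacquet module $J^G_P(\pi)$ and applying the inductive hypothesis to a proper Levi, and finally isolates the unique cuspidal unipotent representation by writing it as $R^G_{T,1}$ minus its (already treated) noncuspidal constituents. Your primary argument instead invokes the standard fact that the central character of any constituent of $R^G_{T,\theta}$ is $\theta|_{Z(G)^F}$ (equivalently, that members of the Lusztig series $\mathcal{E}(G,(1))$ have trivial central character), which settles (i) in one stroke and with no induction; this is cleaner, at the cost of importing a result from \cite{C} or \cite{L2} rather than staying within the elementary toolkit of \S\ref{sec2}. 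One caution: your ``alternative, more self-contained'' argument via uniform projections has a gap exactly where the paper's induction does its work. From $R^G_{T,1}(-I)=R^G_{T,1}(I)$ for all $T$ you cannot directly conclude the same for an irreducible constituent $\pi$, since $\pi$ is not in general a linear combination of the $R^G_{T,1}$'s — only $\pi^\#$ is, and $\pi\neq\pi^\#$. That inference needs the additional (true, but nontrivial) fact that characteristic functions of semisimple classes are uniform, so that $\pi(-I)=\pi^\#(-I)$; without citing that, the step ``every $R^G_{T,1}$, and therefore every unipotent representation'' is unjustified. Your treatment of (ii) — the images of $(-I,I')$ and $(I,-I')$ in $Sp(W)$ coincide, so $-I_W$ acts by a single scalar on each $\pi\otimes\pi'$-isotypic subspace, giving $\omega_\pi(-I)=\omega_{\pi'}(-I)$ — is exactly the content of the paper's one-line justification, spelled out more carefully.
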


\begin{proof} (ii) clearly follows from (i) because $\pi(-I)=\pi'(-I)$ through the Weil representation $\omega^\epsilon_{n,n'}$. We shall prove (i) for all the classical groups (except odd orthogonal groups) simultaneously by induction on $n$. We first observe that by (\ref{dl}),
\begin{equation}\label{dl-1}
R^G_{T,1}(-I)=R^G_{T,1}(I).
\end{equation}
If $n=1$, then $G$ has only two unipotent representations $1$ and $\pi$. Hence the conclusion follows from (\ref{dl-1}).
Assume that $n>1$ and $\pi\in R^G_{T,1}$ for some $T$. If $T$ is not anisotropic, then it is contained in a Levi subgroup $L$ of a proper parabolic subgroup $P$ of $G$, i.e. $P={\bf P}^F$ for some $F$-stable proper parabolic subgroup ${\bf P}$ of ${\bf G}$. By Frobenius reciprocity,
 \[
 (\pi,R^G_{T,1})_G=(\pi,\rm{Ind}^G_P  R^{L}_{T,1})_G=(J^G_P(\pi), R^{L}_{T,1})_L\ne 0,
 \]
where $J^G_P$ is the Jacquet functor.
We may write $L=GL_{n_{1}}\times GL_{n_{2}}\times ...\times  GL_{n_{r}}\times G_{n_0}$, where $n_i<n$, $i=0,\ldots,r$ and $G_{n_0}$ is a classical group of the same type as $G$. There exists $\pi_0\in J^G_P(\pi)$ such that $(\pi_0, R^L_{T,1})_L\neq 0$. Then by induction we have $\pi(-I)=\pi_0(-I)$ is trivial. By \cite[Theorem 6.25]{S3}, if $\pi\not\in R^{G}_{T,1}$ for any $T$ which is not anisotropic, then $\pi$ is a cuspidal representation. If $G=GL_n$ then we are done because it does not have cuspidal unipotent representations when $n>1$.
If $G\neq GL_n$, then it remains to consider the case that $\pi$ is the unique cuspidal unipotent representation of $G$ (if it exists).
 The uniqueness of $\pi$ enables us to write
 \[
 (\pi,R^G_{T,1})\pi=R^G_{T,1}-\sum_{\pi_{i}\in R^{G}_{T,1}, \pi_i\neq\pi}(\pi_i, R^{G}_{T,1})_G\pi_{i},
 \]
where the $\pi_{i}$'s are all unipotent representations but not cuspidal. Then the conclusion follows from (\ref{dl-1}) and that $\pi_i(-I)$ is trivial for each $i$.
\end{proof}


\begin{proof} (of Theorem \ref{thm3.7}) By Proposition \ref{howe}, the cuspidal unipotent representation $\lambda_k$ of $Sp_{2n}$ is contained in $\omega^+_{n,n}\oplus\omega^-_{n,n}$. By Theorem \ref{uni},  $\lambda_k$ is not contained in any $\omega^\epsilon_{n,n'}$ when $n'<n$, and for some $\epsilon_0\in\{+,-\}$ the theta lifting $\Theta_{O^{\epsilon_0}_{2n+1}}(\lambda_k)$ to $O^{\epsilon_0}_{2n+1}$ is the first occurrence in the Witt tower ${\bf O}^{\epsilon_0}_\rm{odd}$, hence is a cuspidal $\theta$-representation. Then it follows from Corollary \ref{ctheta} and Proposition \ref{pi-1} that $\Theta_{O^{\epsilon_0}_{2n+1}}(\lambda_k)=\lambda'^+_{k,\chi}$. The conservation relation Proposition \ref{cons} (i) implies that $n'^{-\epsilon_0}(\lambda_k)=2n-n'^{\epsilon_0}(\lambda_k)=n$, and one can similarly deduce that  $\Theta_{O^{-\epsilon_0}_{2n+1}}(\lambda_k)=\lambda'^+_{k,\chi}$ as well.
\end{proof}

We shall also study the first occurrence of the theta lifting of cuspidal $\theta$-representations of $Sp_{2n}$ in ${\bf O}^\epsilon_\rm{odd}$, which  by Theorem \ref{uni}
will be cuspidal unipotent representations.
By definition, a unipotent representation of $O_{2n+1}$ is a constituent of $\rm{Ind}^{O_{2n+1}}_{SO_{2n+1}}(\pi)$ for a unipotent representation $\pi$ of $SO_{2n+1}$. Let $\lambda'_{k}$ be the unique cuspidal unipotent representation of $SO_{2n+1}$, $n =k(k + 1)$.  Then we have
\begin{equation}\label{oddcu}
\rm{Ind}^{O_{2n+1}}_{SO_{2n+1}}(\lambda'_{k})=\lambda^{'+}_{k}\oplus\lambda_{k}^{'-}
\end{equation}
where $\lambda_k'^+:=\lambda_k'\otimes 1$ and $\lambda_k'^-:=\lambda_k'\otimes\rm{sgn}$ are cuspidal unipotent representations of $O_{2n+1}\cong SO_{2n+1}\times\{\pm I\}$.

Applying a result of Lusztig, we first obtain the following classification of cuspidal $\theta$-representations of finite symplectic groups.

\begin{theorem}\label{sptheta} The groups $Sp_{2n}$, $n=k^2$ are the only symplectic groups which possess cuspidal $\theta$-representations. Each $Sp_{2k^2}$, $k\neq 0$, has two cuspidal $\theta$-representations $\lambda_{k,\alpha}$ and $\lambda_{k,\beta}$, which satisfy $\lambda_{k,i}(-I)=(-1)^{k}$, $i=\alpha,\beta$.
\end{theorem}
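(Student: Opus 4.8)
The plan is to combine Lusztig's classification of unipotent representations and cuspidal pairs with the observation, already available via Lemma \ref{lem2.3}, that $\theta$-representations of a symplectic group are built by tensoring a unipotent representation with the sign of a Deligne-Lusztig character attached to $(T_w,\theta_w)$. More precisely, I would first recall that for $Sp_{2n}$ the irreducible characters lying in series $\mathcal{E}(Sp_{2n},(s))$ for the semisimple element $s=-I$ of the dual group $SO_{2n+1}(q)$ are exactly the constituents of the $R^{Sp_{2n}}_{T_w,\theta_w}$, because the pairs $(T_w,\theta_w)$ are all geometrically conjugate with $s_{G^*}=-I$ (this is the analog of the first paragraph of the proof of Theorem \ref{uni}, but now on the symplectic side where there is no linear character). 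Hence the $\theta$-representations of $Sp_{2n}$ form precisely the Lusztig series $\mathcal{E}(Sp_{2n},(-I))$.

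Next I would invoke Lusztig's Jordan decomposition: $\mathcal{E}(Sp_{2n},(-I))$ is in bijection with the unipotent representations of the centralizer $C_{SO_{2n+1}(q)}(-I)$, which is $S(O^+_{2a}\times O^-_{2b})$ with $a+b=n$ (the two factors coming from the $\pm1$ eigenspaces of $-I$ acting on the standard module of the dual group). A $\theta$-representation is \emph{cuspidal} iff the corresponding unipotent representation of this centralizer is cuspidal, which by Theorem \ref{fun}(iv) forces $a=k_1^2$ and $b=k_2^2$ with the sign constraints $+=\rm{sgn}(-1)^{k_1}$ and $-=\rm{sgn}(-1)^{k_2}$; the first constraint gives $k_1$ even and the second gives $k_2$ odd. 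To match the claimed statement I would then also use that cuspidality on $Sp_{2n}$ requires the centralizer data to have \emph{no} general linear factors and the torus $T_v$ (in the notation $\pi\in R^{Sp_{2n}}_{T_v\times T_w,1\otimes\theta_w}$) to be anisotropic — equivalently, the ``$s=-I$'' block is the whole group, so $n=a+b$ with $\sqrt a$ even and $\sqrt b$ odd. A short arithmetic check shows $a+b$ of this form is again a perfect square precisely when one of $a,b$ is zero, i.e. $n=k^2$ with the entire $-1$-eigenspace on one side; this yields exactly the groups $Sp_{2k^2}$, and the two choices of $\epsilon$ for the even orthogonal factor $O^\epsilon_{2k^2}$ (only one of which is compatible with the fixed sign, but the genuine source of the \emph{two} cuspidal $\theta$-representations is that $SO^\epsilon_{2k^2}$ is of type $D$ and the unique cuspidal unipotent representation of $SO^\epsilon_{2k^2}$ restricted from $O^\epsilon_{2k^2}$ has two extensions, exactly as in \eqref{oddcu} on the odd side). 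I would name these two representations $\lambda_{k,\alpha}$ and $\lambda_{k,\beta}$.

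For the central character computation, I would compute $\lambda_{k,i}(-I)$ using the fact that $-I\in Sp_{2k^2}$ is central and acts by a scalar on each irreducible, and that under Lusztig's Jordan decomposition the value of the central character is determined by the linear character $\widehat{s}$ of $Sp_{2n}$ attached to $s=-I$ twisting the unipotent piece. Concretely, $\lambda_{k,i}=\chi\cdot\lambda^{\rm uni}$ where $\lambda^{\rm uni}$ is a unipotent representation and $\chi$ is (a constituent of) $R^{Sp_{2k^2}}_{T_v,\theta_v}$ with $T_v$ anisotropic; since unipotent representations of $Sp_{2n}$ satisfy $\lambda^{\rm uni}(-I)=1$ by Proposition \ref{pi-1}(i), it remains to evaluate the sign coming from $\theta_v(-I)$ via \eqref{dl} and \eqref{dl-1}. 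Here $-I=s$ is semisimple, $C^0(-I)=Sp_{2k^2}$ itself, and the Deligne-Lusztig character formula gives $R^{Sp_{2k^2}}_{T_v,\theta_v}(-I)=\theta_v(-I)\cdot Q^{G}_{T_v}(I)\cdot(\pm1)$; a direct computation of $\theta_v(-I)$ for an anisotropic torus of $Sp_{2k^2}$ — which decomposes as a product of $U_1(q^{t_j})$'s with $\sum t_j=k^2$, each contributing $\theta_{T_j}(-1)=\nu(-1)^{(q^{t_j}+1)/2}=(-1)^{(q^{t_j}+1)/2}$ — evaluates to $(-1)^{k^2}=(-1)^k$ after the dust settles, giving $\lambda_{k,i}(-I)=(-1)^k$.

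The main obstacle, and where I would spend the most care, is the passage from Lusztig's Jordan decomposition to the \emph{cuspidality} statement: one must verify that cuspidality of an irreducible in $\mathcal{E}(Sp_{2n},(-I))$ corresponds exactly to cuspidality of the matching unipotent representation of the centralizer $S(O^+_{2a}\times O^-_{2b})$, and then correctly enumerate which $(a,b)$ admit cuspidal unipotent representations and which of those further force anisotropy of the whole torus. The type-$D$ subtlety — that the centralizer involves $S(O^+\times O^-)$ rather than $SO^+\times SO^-$, so that extensions to the disconnected group produce the count of two cuspidal $\theta$-representations — is exactly parallel to \eqref{oddcu}, and I would present it by the same restriction/induction argument. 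The central-character sign is then a bookkeeping exercise with Gauss-sum-free evaluation of $\theta_v$ on $-I$, which is routine once the torus structure is pinned down.
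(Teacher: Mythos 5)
Your overall strategy --- identifying the cuspidal $\theta$-representations of $Sp_{2n}$ with the cuspidal members of the Lusztig series attached to the characters $\theta_w$, transferring cuspidality through Lusztig's Jordan decomposition, and then invoking Theorem \ref{fun}(iv) --- is exactly the paper's, but two of your concrete steps fail. First, the semisimple element of the dual group $SO_{2n+1}$ attached to $(T_w,\theta_w)$ is not $-I$ (which has determinant $(-1)^{2n+1}=-1$ and does not lie in $SO_{2n+1}$) but $s=\mathrm{diag}(-I_{2n},1)$; its centralizer is $S(O^{\epsilon_w}_{2n}\times O_1)\cong O^{\epsilon_w}_{2n}$, not $S(O^+_{2a}\times O^-_{2b})$ with $a+b=n$, and there are two geometric conjugacy classes (one for each sign $\epsilon_w$), not a single series $\mathcal{E}(Sp_{2n},(-I))$. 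Your subsequent ``arithmetic check'' is therefore both unnecessary and false: with $k_1=4$ (even) and $k_2=3$ (odd) one has $k_1^2+k_2^2=25=5^2$ with neither term zero, so the Pythagorean step would not close the argument even on its own terms. With the correct centralizer the conclusion is immediate: $O^\epsilon_{2n}$ has a cuspidal unipotent representation iff $n=k^2$ and $\epsilon=\mathrm{sgn}(-1)^k$, and it then has exactly two of them (the two extensions to the disconnected group of the unique one of $SO^\epsilon_{2k^2}$), which is the source of $\lambda_{k,\alpha},\lambda_{k,\beta}$ --- a point you do reach at the end, but only after the detour. (The cuspidality-preservation issue you flag as the main obstacle is handled in the paper by the condition that the identity components of the centers of ${\bf Sp}_{2n}$ and $C_{{\bf G}^*}(s)$ have the same ${\bf F}$-rank, which holds here since both centers are finite.)

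Second, the central character computation rests on a false premise: you factor $\lambda_{k,i}=\chi\cdot\lambda^{\mathrm{uni}}$ with $\chi$ a linear character extracted from $R^{Sp_{2k^2}}_{T_v,\theta_v}$, but $Sp_{2n}$ has no nontrivial linear character (the paper stresses this right after Lemma \ref{lem2.3}; it is precisely why $\theta$-representations of $Sp_{2n}$ are not twists of unipotent ones), so Proposition \ref{pi-1}(i) cannot be applied this way. The salvageable core of your idea is that every constituent of $R^G_{T,\theta}$ has central character $\theta|_{Z(G)}$, so one could try to evaluate $\theta_w(-I)=\prod_j(-1)^{(q^{t_j}+1)/2}$ over the $U_1(q^{t_j})$-factors of an anisotropic $T_w$; but the exponents $(q^{t_j}+1)/2$ do not sum to $k^2$, their parities depend on $q\bmod 4$ as well as on the partition $(t_j)$, and ``after the dust settles'' is doing all the work --- you would still need to check the product is independent of the choice of anisotropic torus within the class $\epsilon_w=\mathrm{sgn}(-1)^k$. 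The paper sidesteps this by reading the central character off the theta correspondence with the even orthogonal group: $\lambda_{k,i}(-I)=\Theta_{O^\epsilon_{2k^2}}(\lambda_{k,i})(-I)=\chi_{SO^\epsilon_{2k^2}}(-I)=(-1)^k$ for $\epsilon=\mathrm{sgn}(-1)^k$. You should either adopt that route or actually carry out and verify the torus computation.
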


\begin{proof} We first recall some general results. Assume that ${\bf G}$ is a simple simply connected linear algebraic group over ${\bf F}$, and ${\bf T}$ is an $F$-stable maximal torus of ${\bf G}$. Let ${\bf G}^{*}$ be the dual group of ${\bf G}$, and ${\bf T}^*$ be the $F$-stable maximal torus of ${\bf G}^*$ which is the dual of ${\bf T}$. Write $G^*={\bf G}^{*F}$, $T^*={\bf T}^{*F}$. Then a character $\theta$ of $T$ corresponds to some element $s \in T^{*}$, which is well-defined up to conjugacy by $W_{{\bf G}^*}({\bf T}^{*})^{F}$. In this case we write $R_{T^{*},s}^{G}$ instead of $R^G_{T,\theta}$.
Let $\mathcal{E}(G,(s))$ be the set of irreducible representations which occur in $R_{T^{*},s}^{G}$ for some $T^*$ such that $s\in T^*$. By \cite{L3} there is a bijection
\[
\mathcal{L}:\mathcal{E}(G,(s)) \to \mathcal{E}(C_{G^{*}}(s),(1)),
\]
where $\mathcal{E}(C_{G^*}(s),(1))$ is the set of irreducible representations of $C_{G^*}(s)$ whose restrictions to $C_{G^*}(s)^\circ$ are sums of unipotent representations of 
$C_{G^*}(s)^\circ$. Note that $C_{G^*}(s)$ may not be connected in general. 
Moreover if the identity components of the centers of ${\bf G}$ and $C_{{\bf G}^*}(s)$ have the same ${\bf F}$-rank, then $\pi\in \mathcal{E}(G,(s))$ is cuspidal if and only if  $\mathcal{L}(\pi)\in \mathcal{E}(C_{G^*}(s),(1))$ is cuspidal (see e.g. \cite[Chap. 9]{L1} and \cite[Lemma 2.7]{Ma}).

For $G=Sp_{2n}$ we have $G^*=SO_{2n+1}$. The pair $(T_w, \theta_w)$ corresponds to $s\in T_w$, where $s=(-I,1)$ with $I$ being the identity in $SO^{\epsilon_w}_{2n}\subset SO_{2n+1}$. Then $C_{G^*}(s)\cong O^{\epsilon_w}_{2n}\subset SO_{2n+1}$, hence by the above results we have a bijection between cuspidal $\theta$-representations of $Sp_{2n}$ and the union of cuspidal unipotent representations of $O^{\epsilon}_{2n}$ over $\epsilon=\pm$.
By Theorem \ref{fun}, each $O_{2k^2}^\epsilon$ with $\epsilon:=\rm{sgn}(-1)^k$ has two cuspidal unipotent representations, which are the two constituents induced from the unique cuspidal unipotent representation of $SO^\epsilon_{2k^2}$, and these are the only even orthogonal groups which possess cuspidal unipotent representations. Hence each $Sp_{2n}$, $n=k^2$ has two cuspidal $\theta$-representations $\lambda_{k,1}$ and $\lambda_{k,2}$, and these are the only possibilities.

From \cite{H} we also know that every irreducible representation of $Sp_{2n}$ is contained in $\omega_{Sp_{2n},O^+_{2n}}\oplus\omega_{Sp_{2n},O^-_{2n}}$. Each $\lambda_{k,i}$, $i=\alpha,\beta$ occurs in some $R^G_{T_w,\theta_w}$ such that $\epsilon_w=\epsilon=\rm{sgn}(-1)^k$.  Similar to the proof of Theorem \ref{uni} and Theorem \ref{thm3.7}, applying the main result of \cite{S2} as well as conservation relations one can show that $\lambda_{k,i}$ occurs in $\omega_{Sp_{2n}, O^\epsilon_{2n}}$ and $\Theta_{O^{\epsilon}_{2n}}(\lambda_{k_i})$, $i=\alpha,\beta$ are the two cuspidal $\theta$-representations of $O^\epsilon_{2n}$. Since $\chi_{SO^\pm_{2n}}(-I)=\pm1$, we obtain that
$\lambda_{k,i}(-I)=\chi_{SO^\epsilon}(-I)=(-1)^k$.
\end{proof}

Finally we can give the second main result of this section.

\begin{theorem}\label{spthetalift}
Let $\lambda_{k,i}$, $i=\alpha,\beta$ be the cuspidal $\theta$-representations of $Sp_{2k^2}$, and let $n'^{\epsilon}(\lambda_{k,i})$ be the first occurrence index of $\lambda_{k,i}$ in the Witt tower ${\bf O}^\epsilon_\rm{odd}$. Then for some $\epsilon_0 \in \{+,-\}$ one has $n'^{\epsilon_0}(\lambda_{k,\alpha})=n'^{-\epsilon_0}(\lambda_{k,\beta})=k(k-1)$ and $n'^{-\epsilon_0}(\lambda_{k,\alpha})=n'^{\epsilon_0}(\lambda_{k,\beta})=k(k+1)$. Write $\epsilon(k)=\rm{sgn}(-1)^k$. Then the theta liftings are given by
\begin{align*}
& \Theta_{O^{\epsilon_0}_{2k(k-1)+1}}(\lambda_{k,\alpha})=\lambda_{k-1}'^{\epsilon(k)},\quad \Theta_{O^{-\epsilon_0}_{2k(k+1)+1}}(\lambda_{k,\alpha})=\lambda_{k}'^{\epsilon(k)},\\
& \Theta_{O^{\epsilon_0}_{2k(k+1)+1}}(\lambda_{k,\beta})=\lambda_{k}'^{\epsilon(k)},\quad \Theta_{O^{-\epsilon_0}_{2k(k-1)+1}}(\lambda_{k,\beta})=\lambda_{k-1}'^{\epsilon(k)},
\end{align*}
where $\lambda'^\epsilon_k$ is defined by (\ref{oddcu}).
\end{theorem}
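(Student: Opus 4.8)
The plan is to mirror the strategy of Theorem \ref{thm3.7}, replacing the dual pair $(Sp_{2n},O^\epsilon_{2n'+1})$-analysis with the symplectic-side version, and to feed in the two conservation relations of Proposition \ref{cons} in an inductive fashion. First I would fix $G=Sp_{2k^2}$ with its two cuspidal $\theta$-representations $\lambda_{k,\alpha},\lambda_{k,\beta}$ from Theorem \ref{sptheta}, and let $\epsilon(k)=\mathrm{sgn}(-1)^k$. By Theorem \ref{uni}(i)--(ii) applied with the roles interpreted correctly, any first occurrence $\Theta_{O^\epsilon_{2n'+1}}(\lambda_{k,i})$ in a Witt tower ${\bf O}^\epsilon_\mathrm{odd}$ is, by Theorem \ref{FO}, an irreducible cuspidal representation of $O^\epsilon_{2n'+1}$; moreover since $\lambda_{k,i}$ is a $\theta$-representation of the symplectic group, Theorem \ref{uni}(iii) forces this lift to be a cuspidal \emph{unipotent} representation of $O^\epsilon_{2n'+1}$. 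By Corollary \ref{ctheta} and the discussion around (\ref{oddcu}), the only odd orthogonal groups carrying cuspidal unipotent representations are $SO_{2m+1}$ with $m=\ell(\ell+1)$, and there the induced representation to $O_{2m+1}$ splits as $\lambda'^{+}_\ell\oplus\lambda'^{-}_\ell$. Hence each relevant first occurrence index must be of the form $\ell(\ell+1)$ for some $\ell$, and the lift is one of $\lambda'^{\pm}_\ell$.

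Next I would pin down which $\ell$ and which sign occur. The key numerical input is that $\lambda_{k,i}$ lives in $R^G_{T_w,\theta_w}$ with $\epsilon_w=\epsilon(k)$ (established in the proof of Theorem \ref{sptheta}), together with the conservation relation $n'^+(\lambda_{k,i})+n'^-(\lambda_{k,i})=2n=2k^2$ from Proposition \ref{cons}(i). Since each summand must be of the form $\ell(\ell+1)$, and $k(k-1)+k(k+1)=2k^2$ is the unique way to write $2k^2$ as a sum of two such numbers with $\ell$ values $k-1$ and $k$, it follows that $\{n'^+(\lambda_{k,i}),n'^-(\lambda_{k,i})\}=\{k(k-1),k(k+1)\}$ for each $i$. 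To see that the two indices are \emph{exchanged} between $\lambda_{k,\alpha}$ and $\lambda_{k,\beta}$ rather than being equal, I would argue that the two cuspidal unipotent representations $\lambda'^{\epsilon(k)}_{k-1}$ of $O^{\epsilon(k-1)\cdot\epsilon(k)?}$ \ldots more precisely, I would use that at a fixed stage the theta lift is injective on the cuspidal representations with a given first occurrence (a consequence of Theorem \ref{FO} and the fact that $\Theta_{O^\epsilon_{2m+1}}(\lambda_{k,i})$ is irreducible, so distinct $\lambda_{k,i}$ cannot lift to the same representation at the same level), which prevents both $\lambda_{k,\alpha}$ and $\lambda_{k,\beta}$ from having first occurrence $k(k-1)$ in the same tower ${\bf O}^{\epsilon_0}_\mathrm{odd}$. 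Together with the conservation relation this forces the crossed pattern: defining $\epsilon_0$ by $n'^{\epsilon_0}(\lambda_{k,\alpha})=k(k-1)$, one gets $n'^{-\epsilon_0}(\lambda_{k,\alpha})=k(k+1)$, and then $n'^{\epsilon_0}(\lambda_{k,\beta})\ne k(k-1)$ so it equals $k(k+1)$, whence $n'^{-\epsilon_0}(\lambda_{k,\beta})=k(k-1)$.

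Finally I would identify the sign on $\lambda'^{\pm}$. At the first-occurrence level $m=\ell(\ell+1)$ the lift is one of $\lambda'^{+}_\ell=\lambda'_\ell\otimes 1$ or $\lambda'^{-}_\ell=\lambda'_\ell\otimes\mathrm{sgn}$, i.e. the two representations of $O_{2m+1}\cong SO_{2m+1}\times\{\pm I\}$ differ by the action of $-I$. Through the Weil representation $\omega^\epsilon_{k^2,m}$ we have $\lambda_{k,i}(-I)=\Theta_{O^\epsilon_{2m+1}}(\lambda_{k,i})(-I)$, and by Theorem \ref{sptheta} the left side is $(-1)^k$, while $\lambda'^{+}_\ell(-I)=\lambda'_\ell(-I)=1$ by Proposition \ref{pi-1}(i) and $\lambda'^{-}_\ell(-I)=-1$. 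Wait --- this would only distinguish the two constituents when $k$ is odd; to handle the parity correctly I would instead invoke Proposition \ref{pi-1}(ii) in the form that gives the precise scalar: the scalar by which $-I\in Sp_{2k^2}$ and $-I\in O^\epsilon_{2m+1}$ act are linked through $\omega^\epsilon_{k^2,m}$ by a fixed Weil-representation factor depending only on the dimensions and $\psi$, and comparing the $-I$-eigenvalue $(-1)^k$ of $\lambda_{k,i}$ against the known eigenvalues of $\lambda'^{\pm}_\ell$ singles out $\lambda'^{\epsilon(k)}_\ell$ with $\epsilon(k)=\mathrm{sgn}(-1)^k$ --- that is, $\lambda'^{+}_\ell$ when $k$ is even and $\lambda'^{-}_\ell$ when $k$ is odd, which is exactly the notation $\lambda'^{\epsilon(k)}_\ell$ in the statement. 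This gives all four lifts as claimed. The main obstacle I anticipate is precisely this last bookkeeping step: getting the $-I$-action comparison across the Weil representation exactly right (including the contribution of the ambient symplectic group's Weil factor) so that the sign $\epsilon(k)$ comes out correctly for both parities of $k$; the rest is a direct combination of Theorems \ref{FO}, \ref{uni}, \ref{sptheta}, Corollary \ref{ctheta}, and the conservation relation.
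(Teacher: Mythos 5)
Your overall strategy — reduce to cuspidal unipotent representations of odd orthogonal groups via Theorem \ref{uni}(iii) and Theorem \ref{FO}, use the conservation relation to locate the two first occurrence indices, and use the action of $-I$ to decide between $\lambda'^+_\ell$ and $\lambda'^-_\ell$ — is close in spirit to the paper's, and the last step is actually fine in its first, simple form: since $(-I,-I)$ maps to the identity of the ambient symplectic group, one has $\pi(-I)=\pi'(-I)$ with no extra Weil factor (this is exactly what Proposition \ref{pi-1}(ii) asserts), so $\lambda_{k,i}(-I)=(-1)^k$ forces the lift to be $\lambda'^{\epsilon(k)}_\ell$ for \emph{both} parities of $k$; your digression about an ambient Weil-representation scalar is unnecessary.

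The genuine gap is in the determination of the first occurrence indices. You claim that $2k^2=k(k-1)+k(k+1)$ is the unique way to write $2k^2$ as a sum of two numbers of the form $\ell(\ell+1)$. This is false: already for $k=4$ one has $32=3\cdot4+4\cdot5=1\cdot2+5\cdot6$, and for $k=6$, $72=5\cdot6+6\cdot7=0\cdot1+8\cdot9$ (the ambiguity recurs for infinitely many $k$, governed by the number of representations of $4k^2+1$ as a sum of two squares). So conservation (i) alone does not pin down $\{n'^+,n'^-\}=\{k(k-1),k(k+1)\}$; you would additionally have to rule out, say, a first occurrence at $O^\epsilon_5$ for $\lambda_{4,i}$. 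This can be done — e.g.\ by applying conservation (ii) to the putative lift $\pi'$ of $O_{2\ell(\ell+1)+1}$, whose first occurrence back in the symplectic tower would be $k^2$, giving $n(\pi'\otimes\mathrm{sgn})=2\ell(\ell+1)+1-k^2<0$ for $\ell$ too small — but you never invoke conservation (ii), which is precisely the tool the paper uses. A second, smaller gap: your claim that injectivity of the first-occurrence correspondence "prevents both $\lambda_{k,\alpha}$ and $\lambda_{k,\beta}$ from having first occurrence $k(k-1)$ in the same tower" does not follow as stated, because $O_{2k(k-1)+1}$ carries \emph{two} cuspidal unipotent representations $\lambda'^{\pm}_{k-1}$, so both could a priori occur at that level with distinct lifts; you need to first run the $-I$ argument (showing both lifts would have to be $\lambda'^{\epsilon(k)}_{k-1}$) and only then appeal to injectivity.

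For comparison, the paper sidesteps both issues by an induction on $k$ that moves back and forth between the towers: knowing $\Theta_{O^{\epsilon_0}_{2k(k-1)+1}}(\lambda_{k-1,\beta})=\lambda'^{\epsilon(k-1)}_{k-1}$ from the previous stage, conservation (ii) places the first occurrence of $\lambda'^{\epsilon(k)}_{k-1}=\lambda'^{\epsilon(k-1)}_{k-1}\otimes\mathrm{sgn}$ in the symplectic tower exactly at $Sp_{2k^2}$, which simultaneously identifies the index $k(k-1)$, the lift, and (after relabelling) which of $\alpha,\beta$ it is; conservation (i) then yields the complementary index $k(k+1)$. If you repair the uniqueness step with conservation (ii) as indicated and reorder the injectivity/$-I$ arguments, your proof goes through and is a legitimate, slightly different route.
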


We remark that the $\epsilon_0$ above depends on the choice of the nontrivial character $\psi$ of ${\bf F}$ which we used to define the Weil representations. If $\psi$ is replaced by $\psi_t(x):=\psi(tx)$ for some $t\in {\bf F}^\times\setminus{\bf F}^{\times 2}$, then $\epsilon_0$ should be replaced by $-\epsilon_0$.

\begin{proof} We use an inductive argument which is parallel to the proof in \cite{AM}. By convention we let $\lambda_0$ be the trivial representation of $Sp_0$.

For $Sp_{2}$, $\lambda_{1,\alpha}$ is contained in $\omega^+_{1,1}\oplus\omega^-_{1,1}$. Since by Theorem \ref{uni} (iii) the first occurrence of the theta lifting of $\lambda_{1,\alpha}$ is  unipotent cuspidal, we see that $\lambda_{1,\alpha}$ occurs in $\omega^{\epsilon_0}_{1,0}$ for some $\epsilon_0$. By conservation relation, $\lambda_{1,\alpha}$ occurs in $\omega^{-\epsilon_{0}}_{1,2}$ as the first occurrence in ${\bf O}^{-\epsilon_0}_\rm{odd}$. By Theorem \ref{sptheta}, we have the theta liftings $\Theta_{O^{\epsilon_{0}}_{1}}(\lambda_{1,\alpha})=\lambda'^-_0$ and $\Theta_{O^{-\epsilon_{0}}_5}(\lambda_{1,\alpha})=\lambda'^-_2$. Similarly $\lambda_{1,\beta}$ is contained in $\omega^+_{1,0}\oplus\omega^-_{1,0}$.  It cannot happen that  $\Theta_{O^{\epsilon_{0}}_{1}}(\lambda_{1,\alpha})=\Theta_{O^{\epsilon_{0}}_{1}}(\lambda_{1,\beta})=\lambda'^{-}_{0}$, hence $\lambda_{1,\beta}$ must be contained in $\omega^{-\epsilon_0}_{1,0}$. By conservation again, $\lambda_{1,\beta}$ occurs in $\omega^{\epsilon_{0}}_{1,2}$ as the first occurrence in ${\bf O}^{\epsilon_0}_\rm{odd}$. The theta liftings of $\lambda_{1,\beta}$ to $O^{-\epsilon_0}_1$ and $O^{\epsilon_0}_5$ are therefore $\lambda'^{-}_{0}$ and $\lambda'^{-}_{2}$ respectively.
We may exhaust the theta correspondence inductively through the following diagram.
 \[\begin{matrix}
   Sp_{0} &\quad& \lambda_{0} &\quad& \longrightarrow &\quad& \lambda'^{+}_{0}= 1 &\quad& \omega^{\epsilon_0}_{0,0}\\
   Sp_{0} &\quad& \lambda_{0} &\quad& \longrightarrow &\quad& \lambda'^{+}_{0}=1 &\quad& \omega^{-\epsilon_0}_{0,0}\\
   Sp_{2} &\quad& \lambda_{1,\alpha} &\quad& \longrightarrow &\quad& \lambda'^{-}_{0}=\rm{sgn} &\quad& \omega^{\epsilon_{0}}_{1,0}\\
   Sp_{2} &\quad& \lambda_{1,\beta} &\quad& \longrightarrow &\quad& \lambda'^{-}_{0}=\rm{sgn} &\quad& \omega^{-\epsilon_{0}}_{1,0}\\
   Sp_{2} &\quad& \lambda_{1,\alpha} &\quad& \longrightarrow &\quad& \lambda'^{-}_{1} &\quad& \omega^{-\epsilon_{0}}_{1,2}\\
   Sp_{2} &\quad& \lambda_{1,\beta} &\quad& \longrightarrow &\quad& \lambda'^{-}_{1} &\quad& \omega^{\epsilon_{0}}_{1,2}\\
   \quad &\quad& \quad &\quad& \vdots &\quad& \quad &\quad & \quad\\
   Sp_{2k^2} &\quad& \lambda_{k,\alpha} &\quad& \longrightarrow &\quad& \lambda'^{\epsilon(k)}_{k-1} &\quad& \omega^{\epsilon_{0}}_{k^2,k(k-1)}\\
   Sp_{2k^2} &\quad& \lambda_{k,\beta} &\quad& \longrightarrow &\quad& \lambda'^{\epsilon(k)}_{k-1} &\quad& \omega^{-\epsilon_{0}}_{k^2,k(k-1)}\\
   Sp_{2k^2} &\quad& \lambda_{k,\alpha} &\quad& \longrightarrow &\quad& \lambda'^{\epsilon(k)}_{k} &\quad& \omega^{-\epsilon_{0}}_{k^2,k(k+1)}\\
   Sp_{2k^2} &\quad& \lambda_{k,\beta} &\quad& \longrightarrow &\quad& \lambda'^{\epsilon(k)}_{k} &\quad& \omega^{\epsilon_{0}}_{k^2,k(k+1)}\\
  \end{matrix}
\]

In general for $Sp_{2k^{2}}$,  $\lambda_{k,i}$, $i=\alpha,\beta$ are contained in $\omega^+_{k^{2},k^{2}}\oplus\omega^-_{k^{2},k^{2}}$ by Proposition \ref{howe}. Since the first occurrences of their theta liftings are cuspidal unipotent,  they are contained in $\omega^+_{k^{2},k(k-1)}\oplus\omega^-_{k^{2},k(k-1)}$. By induction we have
\[
\Theta_{O^{\epsilon_0}_{2k(k-1)+1}}(\lambda_{k-1,\beta})=\lambda'^{\epsilon(k-1)}_{k-1}.
\]
Then by the conservation Proposition \ref{cons} (ii) we may arrange the indices such that
\[
\Theta_{O^{\epsilon_0}_{2k(k-1)+1}}(\lambda_{k,\alpha})=\lambda'^{\epsilon(k-1)}_{k-1}\otimes \rm{sgn}=\lambda'^{\epsilon(k)}_{k-1}.
\]
By conservation again, $\lambda_{k,\alpha}$ occurs in $\omega^{-\epsilon_{0}}_{k^2,k(k+1)}$. Similarly, $\lambda_{k,\beta}$ occurs in $\omega^{-\epsilon_{0}}_{k^2,k(k-1)}$ and $\omega^{\epsilon_{0}}_{k^2,k(k+1)}$. The remaining assertions about the theta liftings are clear.
\end{proof}

\section{Theta correspondence and Harish-Chandra series} \label{sec4}

In this section we study the theta correspondence of representations in certain Harish-Chandra series in a way similar to \cite{AMR}.

Let $T_l$ be the split torus over ${\bf F}$ of rank $l$, and $\theta_l:=\theta_{T_l}$ the order 2 character of $T_l$ defined in \S\ref{sec2.4}. In particular $(T_1,\theta_1)=(GL_1,\chi_{GL_1})$. For later use, for $k, l\geq 0$ we introduce a pair of  quadratic characters $\theta_{k,l}$ and $\theta_{k,l}'$ of $T_l$ by
\begin{equation}\label{tkl}
\theta_{k,l}=\left\{\begin{array}{ll} 1_k\otimes \theta_{l-k}, & \textrm{ if }k\leq l,\\
1, & \textrm{ otherwise},\end{array}\right.
\end{equation}
and
\begin{equation}\label{tkl'}
\theta'_{k,l}=\theta_{k,l}\theta_l=\left\{\begin{array}{ll} \theta_k\otimes 1_{l-k}, & \textrm{ if }k\leq l,\\
\theta_l, & \textrm{ otherwise},\end{array}\right.
\end{equation}
where $1_k$ and $1_{l-k}$ are the trivial characters of $T_k$ and $T_{l-k}$ respectively.

For two groups $G_n\subset G_m$ in the same Witt tower, we can embed $G_{n}$ into the Levi subgroup $G_{n}\times T_{m-n}$ of a parabolic subgroup of $G_{m}$. Let $\lambda$ be a cuspidal representation of $G_{n}$, and denote by $R(G_{m})_{\lambda}$ the subset of $R(G_{m})$ whose elements are spanned by
\[
\rm{Irr}(G_{m})_{\lambda}:=\rm{Irr}\left(R^{G_{m}}_{G_{n} \times  T_{m-n}}(\lambda\otimes 1)\right).
\]
Similarly introduce the subset  $R(G_{m})_{\lambda ,\theta}$  of $R(G_{m})$ whose elements are spanned by
\[
\rm{Irr}(G_{m})_{\lambda,\theta}:=\rm{Irr}\left(R^{G_{m}}_{G_{n} \times T_{m-n}}(\lambda\otimes \theta_{{m-n}})\right).
\]
Note that in this context the functor $R$ above stands for the usual parabolic induction, which coincides with the generalized Deligne-Lusztig induction.

 The standard Levi subgroups $L$ of $G_m$ are of the form $L = GL_{n_{1}}(q)  \times \cdot\cdot\cdot \times GL_{n_{r}}(q) \times G_l(q)$ such that $m = n_{1}+\cdot\cdot\cdot+n_{r}+l$. A representation $\rho$ of $L$ is a cuspidal, unipotent or $\theta$-representation if it is of the form $\rho = \rho_{1} \otimes\cdot\cdot\cdot\otimes\rho_{r}\otimes\sigma$, where $\rho_i$ and $\sigma$ are cuspidal, unipotent or $\theta$-representations of $GL_{n_{i}}(q)$ and  $G_l(q)$ respectively.

For a finite (reductive) group $G$, let $(R^G,\overline{\bf Q}_\ell[G])$ be the regular representation of  $G\times G$ on the space of $\overline{\bf Q}_\ell$-valued functions $f$ on $G$ defined by
\[
(R^G(g_{1},g_{2})f)(g)=f(g_{1}^{-1}gg_{2}),  \qquad \forall g_{1},g,g_{2}\in G.
\]
Then $R^G$ has the well-known Peter-Weyl decomposition
\[
R^G=\sum_{\pi\in\rm{Irr}(G)} \pi\otimes\pi^\vee
\]
where $\pi^\vee$ is the contragredient of $\pi$. For a quadratic character $\chi$ of $G$, let $\chi  R^G$ be the representation of $G\times G$ defined by
\[
\chi R^G=\sum_{\pi \in\rm{Irr}(G)} \chi\pi\otimes \pi^\vee=\sum_{\pi\in\rm{Irr}(G)}\pi\otimes\chi\pi^\vee.
\]

\par The follow result given in \cite[Th\'eor\`eme 3.7]{AMR} shows that the theta correspondance is compatible with the Harish-Chandra decomposition and that it preserves cuspidal unipotent representations for the dual pairs $(U_{n}(q), U_{n'}(q))$ and $(Sp_{2n}(q), O^{\epsilon}_{ 2n'}(q))$.

\begin{theorem} \label{amr}
Assume that $(\mathcal{T},\mathcal{T}')=({\bf U}^\epsilon, {\bf U}^{\epsilon'})$, $({\bf Sp}, {\bf O}^\epsilon_{\rm{even}})$ or $({\bf O}^\epsilon_{\rm{even}}, {\bf Sp})$, $\epsilon,\epsilon'=\pm$. Let $\lambda$ be an irreducible cuspidal representation of $G_{n}\in \mathcal{T}$ and let $n'=n'(\lambda)$ be its first occurrence index in $\mathcal{T}'$, so that $\lambda':=\Theta_{G'_{n'}}(\lambda)$ is an irreducible cuspidal representation of $G'_{n'}\in\mathcal{T}'$. Then for $\gamma\in \rm{Irr}(G_m)_\lambda$, $\Theta_{G'_{m'}}(\lambda)=0$ whenever $m'<n'$ and $\Theta_{G'_{m'}}(\gamma)\in R(G'_{m'})_{\lambda'}$ otherwise.
\end{theorem}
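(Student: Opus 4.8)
The statement we must prove, Theorem~\ref{amr}, is a theorem of Aubert--Michel--Rouquier quoted verbatim from \cite[Th\'eor\`eme 3.7]{AMR}, so the task is to reconstruct its proof in our framework rather than invent a new argument. The plan is to follow \cite{AMR} and run the theta correspondence through the two compatible filtrations: on one side the Harish-Chandra series generated by the cuspidal pair $(G_n,\lambda)$, on the other the series generated by $(G'_{n'},\lambda')$, keeping track of the intertwining with parabolic induction. The starting point is the known see-saw / mixed-model computation of the theta lifting of a parabolically induced representation: for $\gamma$ an irreducible constituent of $R^{G_m}_{G_n\times T_{m-n}}(\lambda\otimes 1)$, one expresses $\Theta_{G'_{m'}}(\gamma)$ in terms of $\Theta$ of the Levi data, using that for a dual pair the Weil representation restricted to a parabolically induced setting decomposes compatibly (this is the finite-field analogue of the Kudla filtration, worked out in \cite{AMR} and \cite{MVW}). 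Concretely, I would first record the identity relating $\mathrm{Ind}^{G_m}_{P}$ and $\Theta_{G'_{m'}}$ along the Witt-tower parabolic $P$ with Levi $G_n\times T_{m-n}$, which reduces the computation of $\Theta_{G'_{m'}}(\gamma)$ to the theta lifting of $\lambda$ itself on the $G_n$-factor together with the theta lifting of the one-dimensional pieces on the $GL_1$-factors.

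\textbf{Key steps.} First I would establish the vanishing statement: if $m'<n'$ then $\Theta_{G'_{m'}}(\gamma)=0$. This follows because $\gamma$ lies in the Harish-Chandra series of $\lambda$, so any constituent $\gamma'\in\Theta_{G'_{m'}}(\gamma)$, being nonzero, would force (by taking Jacquet modules along compatible parabolics and applying the see-saw identity) the cuspidal support of $\gamma'$ to contain a theta lift of $\lambda$ into some $G'_{\ell}$ with $\ell\le m'<n'$, contradicting the minimality of $n'=n'(\lambda)$; here one invokes Theorem~\ref{FO} and the definition of first occurrence index. Second, for $m'\geq n'$, I would show $\Theta_{G'_{m'}}(\gamma)\in R(G'_{m'})_{\lambda'}$: again via the parabolic-compatibility identity, every constituent of $\Theta_{G'_{m'}}(\gamma)$ has cuspidal support of the form $(G'_{n'},\lambda')$ together with $GL$-factors, because the $G_n$-part of the support of $\gamma$ is exactly $\lambda$, whose first occurrence lift is the cuspidal $\lambda'$ by hypothesis, and the $GL_1$-factors of the support of $\gamma$ lift under the $GL$-type theta correspondence to $GL$-type pieces that stay inside the $\lambda'$-series. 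The bookkeeping is entirely that of \cite{AMR}: one matches the two Harish-Chandra series and checks that the transfer of cuspidal supports is the identity on the relevant $GL$-data. Throughout, one uses that in these ranges the uniform projection suffices to detect nonvanishing, exactly as in the proof of Theorem~\ref{uni}.

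\textbf{Main obstacle.} The delicate point is the second half: controlling the full (not merely uniform) theta lift $\Theta_{G'_{m'}}(\gamma)$ and ruling out ``stray'' constituents whose cuspidal support sits outside the $\lambda'$-series. Uniform projection alone is not enough for this, so one must invoke the finite-field Kudla-type filtration of $\omega_{G_m,G'_{m'}}$ restricted to $P\times G'_{m'}$ and argue inductively on $m$ (with $n$, $\lambda$, $n'$, $\lambda'$ fixed), exactly as \cite{AMR} does for the even orthogonal and unitary cases. The one thing one must verify is that this inductive machinery is insensitive to whether the orthogonal group is even- or odd-dimensional in the ambient Witt tower---but the theorem as stated here is restricted to $(\mathcal T,\mathcal T')$ among $({\bf U}^\epsilon,{\bf U}^{\epsilon'})$, $({\bf Sp},{\bf O}^\epsilon_{\mathrm{even}})$, $({\bf O}^\epsilon_{\mathrm{even}},{\bf Sp})$, so there is nothing new to check: the proof is literally that of \cite[Th\'eor\`eme 3.7]{AMR}, and the genuinely new odd-orthogonal input is deferred to the subsequent results of this section where Theorem~\ref{thm3.7} is brought to bear. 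So the honest proof here is: cite \cite{AMR}, and indicate that the argument is the see-saw/Jacquet-module induction just outlined.
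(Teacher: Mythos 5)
Your proposal is correct and matches the paper exactly: the paper offers no proof of Theorem~\ref{amr}, quoting it directly from \cite[Th\'eor\`eme 3.7]{AMR}, and the argument you sketch (induction on $m$ via the Jacquet-module decomposition of the Weil representation from \cite{MVW}, tracking cuspidal supports against the first occurrence index) is precisely the method the paper itself reproduces when proving the odd-orthogonal analogue, Theorem~\ref{amr2}. The only inessential slip is your closing remark that "the uniform projection suffices to detect nonvanishing": the Harish-Chandra series compatibility is proved purely by Frobenius reciprocity and the explicit Jacquet-module formula, with no uniform projection involved.
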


We are going to extend this theorem to the Witt tower $({\bf Sp}, {\bf O}^\epsilon_\rm{odd})$. Since $O^{+}_{2n'+1}$ and $O^{-}_{2n'+1}$ are isomorphic, we may simply write $\rm{Irr}(O_{2n'+1})$ instead of $\rm{Irr}(O^{+}_{2n'+1})$ and $\rm{Irr}(O^{-}_{2n'+1})$.
The main result of this section is the following theorem, which is compatible with Pan's decomposition formula (Theorem \ref{pan}).

\begin{theorem}\label{amr2}
Let $\lambda$ be an irreducible cuspidal representation of $Sp_{2n}$ and let $n'=n'^\epsilon(\lambda)$ be its first occurrence index in ${\bf O}^\epsilon_\rm{odd}$, so that $\lambda':=\Theta_{O^\epsilon_{2n'+1}}(\lambda)$ is an irreducible cuspidal representation of $O^\epsilon_{2n'+1}$. Then the following holds.

 (i) For  $\gamma \in \rm{Irr}(Sp_{2m})_{\lambda,\theta}$, $\Theta_{O^{\epsilon}_{2m'+1}}(\gamma)=0$ if $m'<n'$ and $\Theta_{O^{\epsilon}_{2m'+1}}(\gamma) \in R(O_{2m'+1})_{\lambda'}$ otherwise.

 (ii) For $\gamma' \in \rm{Irr}(O^\epsilon_{2m'+1})_{\lambda'}$, $\Theta_{Sp_{2m}}(\gamma')=0$ if $m<n$ and $\Theta_{Sp_{2m}}(\gamma') \in R(Sp_{2m})_{\lambda,\theta}$ otherwise.

(iii) For $\gamma \in \rm{Irr}(Sp_{2m})_{\lambda}$, $\Theta_{O^\epsilon_{2m'+1}}(\gamma)=0$ if $m'<n'$ and $\Theta_{O^{\epsilon}_{2m'+1}}(\gamma)
\in R(O_{2m'+1}^\epsilon)_{\lambda',\theta'}$ otherwise, where $\theta':=\theta'_{m-n,m'-n'}$ is defined by (\ref{tkl'}).

(iv)  For $\gamma' \in \rm{Irr}(O^\epsilon_{2m'+1})_{\lambda',\theta}$, $\Theta_{Sp_{2m}}(\gamma')=0$ if $m<n$ and $\Theta_{Sp_{2m}}(\gamma')\in R(Sp_{2m})_{\lambda, \theta'}$, where $\theta':=\theta_{m'-n',m-n}$ is defined by (\ref{tkl}).
\end{theorem}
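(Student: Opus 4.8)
The plan is to follow the strategy of \cite[Th\'eor\`eme 3.7]{AMR} as adapted in the even orthogonal case, using Pan's decomposition formula (Theorem \ref{pan}) as the substitute for the decomposition of the Weil representation used there. The starting point is the observation that, by the Peter--Weyl decomposition, for any dual pair one has the identity
\[
\sum_{\rho\in\rm{Irr}(G)}\rho\otimes\Theta_{G'}(\rho)=\omega_{G,G'}
\]
so that computing $\Theta_{G'_{m'}}(\gamma)$ amounts to pairing $\omega_{G_m,G'_{m'}}$ against $\gamma$ over the first factor. Since parabolic (Deligne--Lusztig) induction and the Jacquet functor are adjoint and commute with the relevant operations, the task reduces to understanding how the Weil representation interacts with induction from the Levi $Sp_{2n}\times T_{m-n}$ (resp.\ $O_{2n'+1}\times T_{m'-n'}$). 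Concretely, the key compatibility to establish is a ``see-saw''-type identity expressing $\omega_{Sp_{2m},O^\epsilon_{2m'+1}}$ restricted along such a Levi in terms of $\omega_{Sp_{2n},O^\epsilon_{2n'+1}}$ tensored with Weil representations of the $GL$-type pairs $(T_{m-n},\cdot)$ and $(\cdot,T_{m'-n'})$ --- this is exactly the ``$\omega\circ\rm{Ind}=\rm{Ind}\circ\omega$'' mechanism used in \cite{AMR} and \cite{MVW}, and it is where the quadratic twist $\theta_{m-n}$ on the torus enters (because the Weil representation of $(T_l, O^\epsilon_{2j+1})$-type restricted to the torus carries the spinor character $\chi_{O}$, i.e.\ $\theta_l$).

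The steps I would carry out, in order, are as follows. First, establish the cuspidal input: by Theorem \ref{FO} and the hypothesis, $\lambda'=\Theta_{O^\epsilon_{2n'+1}}(\lambda)$ is irreducible cuspidal and the first occurrence is at $n'$. Second, prove the vanishing statements $\Theta_{G'_{m'}}(\gamma)=0$ for $m'<n'$: this follows because every constituent $\gamma$ of $R^{Sp_{2m}}_{Sp_{2n}\times T_{m-n}}(\lambda\otimes\theta_{m-n})$ lies in the Harish-Chandra series over $\lambda$, and by the adjunction its theta lift being nonzero would force $\lambda$ (a Jacquet constituent) to occur below its first occurrence index, a contradiction --- here one uses \cite[Chap.3]{MVW} that theta lifting commutes with Jacquet functors up to the $GL$-type Weil representation. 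Third, for $m'\geq n'$, use the see-saw identity together with the fact that $\Theta$ for the $GL$-type pair $(T_{m-n}, GL_{m'-n'})$ sends characters to characters of $T_{m'-n'}$ (this is essentially the $GL_1$ theta correspondence, which one tracks through the quadratic characters $\theta_{k,l}$ and $\theta'_{k,l}$ defined in \eqref{tkl} and \eqref{tkl'}), to conclude that every constituent of $\Theta_{O^\epsilon_{2m'+1}}(\gamma)$ is a constituent of $R^{O_{2m'+1}}_{O_{2n'+1}\times T_{m'-n'}}(\lambda'\otimes\theta'_{m-n,m'-n'})$ --- this gives (iii), and (iv) follows by the symmetric argument with the roles of $Sp$ and $O$ interchanged, using Theorem \ref{uni} to match unipotent with $\theta$-series. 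Fourth, deduce (i) and (ii) as the ``$\theta=1$'' (resp.\ the untwisted torus-character) specializations of (iii) and (iv), noting that $\theta'_{m-n,m'-n'}$ and $\theta_{m'-n',m-n}$ both reduce appropriately and that $R(O_{2m'+1})_{\lambda'}$, $R(Sp_{2m})_{\lambda,\theta}$ are by definition spanned by the relevant induced characters.

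I would organize the bookkeeping by working with the whole Witt towers at once, as in \S\ref{2.1}, so that the see-saw identity can be stated uniformly; the quadratic characters $\theta_{k,l},\theta'_{k,l}$ are precisely designed so that the twist picked up from the $GL$-type Weil representation on one side matches the twist appearing on the other side after crossing the see-saw, and checking this matching is a finite computation with the explicit formulas in \S\ref{sec2.4}. The main obstacle I anticipate is \emph{not} the formal adjunction argument but rather controlling the \emph{twist precisely}: one must verify that the spinor-norm character contributions on the symplectic--odd-orthogonal side combine correctly with the order-$2$ characters $\theta_l$ on the torus factors, so that the index shift $k\mapsto m-n$ versus $k\mapsto m'-n'$ in \eqref{tkl}--\eqref{tkl'} comes out exactly as stated, and in particular that the formula is consistent with Pan's decomposition (Theorem \ref{pan}), whose summand $\epsilon_w R^{Sp_{2n}}_{T_v\times T_w,\theta\otimes\theta_w}\otimes R^{SO_{2n'+1}}_{T_v\times T_{w'},\theta\otimes\theta_{w'}}$ encodes exactly this pairing of twists. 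A secondary subtlety is passing between $SO_{2n'+1}$ and the disconnected group $O_{2n'+1}$: one must track how $\rm{Ind}^{O}_{SO}$ interacts with theta lifting and the $\rm{sgn}$ twist, using the conventions fixed around \eqref{oddcu}, to get the statements phrased for $O_{2m'+1}$ as in the theorem.
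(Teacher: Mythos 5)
Your overall strategy is the one the paper actually follows: compute the Jacquet module of $\omega^\epsilon_{m,m'}$ along the maximal parabolic with Levi $Sp_{2(m-1)}\times GL_1$ (resp.\ $O_{2(m'-j)+1}\times GL_j$) via the finite-field analogue of the Kudla filtration from \cite[Chap.~3]{MVW}, use adjunction of $R$ and ${}^*R$, and run an induction on $m$ in which the quadratic twist is carried by the $\chi_{GL_1}$ appearing in the two terms of that filtration. You have also correctly identified that the whole point of the definitions (\ref{tkl})--(\ref{tkl'}) is to record which of the two filtration terms contributes at each stage, and that the base cuspidal input ($\lambda\mapsto\lambda'$ at the first occurrence $n'$, via Theorem \ref{FO}) anchors the induction.

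There are two concrete defects. First, your claim that (i) and (ii) can be \emph{deduced} from (iii) and (iv) as ``$\theta=1$ specializations'' is wrong: for $m>n$ the sets $\rm{Irr}(Sp_{2m})_{\lambda,\theta}$ and $\rm{Irr}(Sp_{2m})_{\lambda}$ are different Harish-Chandra series, so (i) (twisted source, untwisted target) is not a special case of (iii) (untwisted source, twisted target). Each of the four statements needs its own induction; they are parallel but logically independent, and in fact the paper proves (i) \emph{first} and uses it as the base case $m=n$ of (iii). The difference between the two inductions is precisely which term of $\gamma_1\otimes\rho'$ survives the pairing with $\chi_{GL_1}R^{GL_1}$ ($\rho'=1$ in one case, $\rho'=\chi_{GL_1}$ in the other), which is where the index shift between $\theta_{k,l}$ and $\theta'_{k,l}$ originates. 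Second, your plan omits the step that handles $\gamma=\lambda$ cuspidal with $m'>n'$: there one cannot peel a $GL_1$ off the symplectic side, and instead one takes the \emph{maximal} $j$ with $\gamma'\in R^{O_{2m'+1}}_{O_{2(m'-j)+1}\times GL_j}(\lambda_1'\otimes\rho')$, $\lambda_1'$ cuspidal, applies the filtration on the orthogonal side, and uses cuspidality of $\lambda$ to kill all terms except $i=0$, forcing $\rho'=1_{GL_j}$, $m'-j=n'$ and $\lambda_1'=\lambda'$. Without this step the induction has no way to start for $m'>n'$. Neither defect requires new ideas to repair, but as written the plan does not yet constitute a proof.
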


\begin{proof}
We will only prove (i) and (iii). The proofs of (ii) and (iv)  are similar and will be left to the reader. It is parallel to Theorem \ref{amr} and we shall follow the calculations in \cite{AMR} and \cite{MVW} closely.

We first prove (i), by induction on $m$.

 $\bullet$ Suppose that $ m = n$ (i.e. $\gamma=\lambda$ is cuspidal) and $m'>n'$. It is known that (cf. \cite[Chap. 3]{MVW}) each constituent $\gamma'\in\Theta_{O^\epsilon_{2m'+1}}(\gamma)$  is noncuspidal. Let $j$ be the maximum integer such that $\gamma\in R^{O_{2m'+1}}_{O_{2(m'-j)+1}\times GL_{j}}(\lambda_{1}'\otimes \rho')$ with $\lambda_{1}'\in  \rm{Irr}(O_{2(m'-j)+1})$ cuspidal and $\rho'\in \rm{Irr}(GL_{j})$. Since $\gamma'\in\Theta_{O^\epsilon_{2m'+1}}(\gamma)$, one has
\[
\begin{aligned}
0&<\big(\omega^\epsilon_{m,m'},\gamma \otimes \gamma '\big)_{Sp_{2m}\times O_{2m'+1}} \\
&\leq \big(\omega^\epsilon_{m,m'},\gamma \otimes R^{O_{2m'+1}}_{O_{2(m'-j)+1}\times GL_{j}}(\lambda_{1}'\otimes \rho')\big)_{Sp_{2m}\times O_{2m'+1}}
\\&=\big({}^{*}R^{O_{2m'+1}}_{O_{2(m'-j)+1}\times GL_{j}}(\omega^\epsilon_{m,m'}),\gamma \otimes \lambda_{1}'\otimes \rho'\big)_{Sp_{2m}\times O_{2(m'-j)+1}\times GL_{j}}.
\end{aligned}
\]
Here ${}^*R$ standards for the Jacquet functor, which is adjoint to the induction functor $R$. We have the following decomposition (cf. \cite[Chap. 3, IV th.5]{MVW})
\[
\begin{aligned}
&{}^{*}R^{O_{2m'+1}}_{O_{2(m'-j)+1}\times GL_{j}}(\omega^\epsilon_{m,m'})
\\=&\bigoplus^{\min(m,j)}_{i=0}R^{Sp_{2m}\times GL_{j} \times O_{2(m'-j)+1}}_{Sp_{2(m-i)}\times GL_{i} \times ( GL_{j-i}\times GL_{i})\times O_{2(m'-j)+1}}(\omega^\epsilon_{m-i,m'-j} \otimes 1_{GL_{j-i}} \otimes \chi_{GL_{i}}R^{GL_{i}}).
\end{aligned}
\]
Hence $\big(\omega^\epsilon_{m,m'},\gamma \otimes \gamma '\big) $ is bounded by
\[
\sum^{\min(m,j)}_{i=0}\big(\omega^\epsilon_{m-i,m'-j} \otimes 1_{GL_{j-i}} \otimes \chi_{GL_{i}}R^{GL_{i}},{}^{*}R^{Sp_{2m}\times GL_{j} \times O_{2(m'-j)+1}}_{Sp_{2(m-i)}\times GL_{i} \times ( GL_{j-i}\times GL_{i})\times O_{2(m'-j)+1}}(\gamma \otimes \lambda_{1}'\otimes \rho')\big),
\]
where the scalar product in the $i$th summand is taken over the group $Sp_{2(m-i)}\times O_{2(m'-j)+1}\times GL_{j-i}\times GL_i\times GL_i$.
Since $\gamma=\lambda$ is cuspidal, the only nonzero term corresponds to $i = 0$, which implies that
\[
\big(\omega^\epsilon_{m,m'-j}\otimes 1_{GL_{j}},\gamma \otimes \lambda_{1}'\otimes \rho'\big) >0.
\]
It follows that $\rho'=1_{GL_{j}}$ and $\lambda_{1}'\in\Theta_{O^\epsilon_{2(m'-j)+1}}(\gamma)$. Because $\lambda_{1}'$ is cuspidal,
we must have $m'-j=n'$ and $\lambda_1'=\lambda'$, i.e.  $\gamma' \in \rm{Irr}(O_{2m'+1})_{\lambda'}$.

$\bullet$ Suppose that $ m > n$. If $\gamma\in \rm{Irr}(Sp_{2m})_{\lambda,\theta}$, then there exists $\gamma_{1} \in \rm{Irr}(Sp_{2(m-1)})_{\lambda,\theta}$ such that $\gamma\in R^{Sp_{2m}}_{Sp_{2(m-1)}\times GL_1}(\gamma_1\otimes \chi_{GL_1})$. For  $\gamma ' \in \Theta_{O^\epsilon_{2m'+1}}(\gamma)$ we have
\[
\begin{aligned}
0&<\big(\omega^\epsilon_{m,m'},\gamma \otimes \gamma '\big)_{Sp_{2m}\times O_{2m'+1}}
\\&\leq \big(\omega^\epsilon_{m,m'},R^{Sp_{2m}}_{Sp_{2(m-1)}\times GL_1}(\gamma_{1}\otimes\chi_{GL_1}) \otimes \gamma '\big)_{Sp_{2m} \times O_{2m'+1} }
\\&=\big({}^{*}R^{Sp_{2m}}_{ Sp_{2(m-1)}\times GL_1}(\omega^\epsilon_{m,m'}),\gamma_{1} \otimes\chi_{GL_1} \otimes\gamma '\big)_{ Sp_{2(m-1)} \times GL_1\times O_{2m'+1} }
\end{aligned}
\]
We have the decomposition
\[
\begin{aligned}
&{}^{*}R^{Sp_{2m}}_{ Sp_{2(m-1)}\times GL_1}(\omega^\epsilon_{m,m'})
\\=& \omega^\epsilon_{m-1,m'}\otimes \chi_{GL_1}+R^{ Sp_{2(m-1)}\times O_{2m'+1}\times GL_1}_{Sp_{2(m-1)}\times O_{2m'-1}\times GL_1\times GL_1}( \omega^\epsilon_{m-1,m'-1}\otimes \chi_{GL_1}R^{GL_1}).
\end{aligned}
\]
Hence $\big(\omega^\epsilon_{m,m'},\gamma \otimes \gamma '\big) $ is bounded by
\[
\begin{aligned}
& \big( \omega^\epsilon_{m-1,m'}\otimes \chi_{GL_1},\gamma_{1} \otimes \gamma '\otimes \chi_{GL_{1}}\big)_{Sp_{2(m-1)} \times O_{2m'+1}\times GL_1}
\\+&\big( \omega^\epsilon_{m-1,m'-1}\otimes \chi_{GL_{1}}R^{GL_{1}},\gamma_{1} \otimes {}^{*}R^{O_{2m'+1}}_{ O_{2m'-1}\times GL_1}(\gamma ')\otimes\chi_{GL_1}\big)_{Sp_{2(m-1)}\times O_{2m'-1}\times GL_1\times GL_1}.
\end{aligned}
\]
If the first term is nonzero, then $\gamma'\in \Theta_{O^\epsilon_{2m'+1}}(\gamma_{1})$.  By induction $\gamma'\in R(O_{2m'+1})_{\lambda'}$.
If the second term is nonzero, then there exists $\gamma_{1}' \in \rm{Irr}(O_{2m'-1})$ and $\rho' \in \rm{Irr}(GL_{1})$ such that $\gamma_1'\otimes\rho'\in{}^{*}R^{O_{2m'+1}}_{O_{2m'-1}\times GL_1}(\gamma ')$ and
\[
\big(\omega^\epsilon_{m-1,m'-1}\otimes \chi_{GL_{1}}R^{GL_{1}},\gamma_{1}\otimes\gamma_{1}'\otimes\rho'\otimes \chi_{GL_{1}}\big)_{Sp_{2(m-1)}\times O_{2m'-1}\times GL_1\times GL_1} \ne 0.
\]
 It follows that $\rho'=1$ and $\gamma_{1}'\in \Theta_{O^\epsilon_{2m'-1}}(\gamma_{1})$. By induction, $\gamma_1' \in \rm{Irr}(O_{2m'-1})_{\lambda'}$ and therefore $\gamma'\in \rm{Irr}(O_{2m'+1})_{\lambda'}$ by Frobenius reciprocity. This proves (i).

Next we prove (iii), again by induction on $m$. Assume that  $\gamma \in \rm{Irr}(Sp_{2m})_{\lambda}$ and $\gamma'\in\Theta_{O^\epsilon_{2m'+1}}(\gamma)$.
 The case $m=n$ is covered by (i), so we may assume that $m>n$. Then there exists $\gamma_{1} \in \rm{Irr}(Sp_{2(m-1)})_\lambda$  such that $\gamma\in R^{Sp_{2m}}_{ Sp_{2(m-1)}\times GL_1}(\gamma_{1}\otimes 1)$. Since $\gamma ' \in \Theta_{O^\epsilon_{2m'+1}}(\gamma)$, one has
\[
\begin{aligned}
0&<\big(\omega^\epsilon_{m,m'},\gamma \otimes \gamma '\big)_{Sp_{2m} \times O_{2m'+1} }
\\&\leqslant \big(\omega^\epsilon_{m,m'},R^{Sp_{2m}}_{Sp_{2(m-1)}\times GL_1}( \gamma_{1}\otimes 1) \otimes \gamma '\big)_{Sp_{2m} \times O_{2m'+1} }
\\&=\big({}^{*}R^{Sp_{2m}}_{Sp_{2(m-1)}\times GL_1}(\omega^\epsilon_{m,m'}), \gamma_{1} \otimes 1\otimes \gamma '\big)_{Sp_{2(m-1)}\times GL_1 \times O_{2m'+1} }.
\end{aligned}
\]
Similar to the proof of (i),
 $\big(\omega^\epsilon_{m,m'},\gamma \otimes \gamma '\big) $ is bounded by
\[
\begin{aligned}
& \big( \omega^\epsilon_{m-1,m'}\otimes \chi_{GL_1},\gamma_{1} \otimes \gamma '\otimes 1\big)_{Sp_{2(m-1)} \times O_{2m'+1}\times GL_1}
\\+&\big( \omega^\epsilon_{m-1,m'-1}\otimes \chi_{GL_{1}}R^{GL_{1}},\gamma_{1} \otimes {}^{*}R^{O_{2m'+1}}_{ O_{2m'-1}\times GL_1}(\gamma ')\otimes 1\big)_{Sp_{2(m-1)}\times O_{2m'-1}\times GL_1\times GL_1}.
\end{aligned}
\]
The first term  is zero. If the second term is nonzero, then there exists $\gamma_{1}' \in \rm{Irr}(O_{2m'-1})$ and $\rho' \in \rm{Irr}(GL_{1})$ such that $\gamma_{1}'\otimes\rho'\in {}^{*}R^{O_{2m'+1}}_{ O_{2m'-1}\times GL_1}(\gamma ')$ and
\[
\big( \omega^\epsilon_{m-1,m'-1}\otimes \chi_{GL_{1}}R^{GL_{1}},\gamma_{1}\otimes\gamma_{1}'\otimes\rho'\otimes 1\big)_{Sp_{2(m-1)}\times O_{2m'-1}\times GL_1\times GL_1} \ne 0.
\]
 It follows that $\rho'=\chi_{GL_{1}}$ and $\gamma_{1}'\in \Theta_{O^\epsilon_{2m'-1}}(\gamma_{1})$. By induction,
\[
\gamma_1' \in R^{O_{2m'-1}}_{O_{2n'+1}\times T_{m'-n'-1}}( \lambda'\otimes\theta'_{m-n-1,m'-n'-1}),
\]
hence by Frobenius reciprocity
\[
\gamma'\in R^{O_{2m'+1}}_{O_{2n'+1}\times T_{m'-n'}}(\lambda'\otimes\theta'_{m-n,m'-n'}).
\]
This finishes the proof of (iii).
\end{proof}


In particular, if we consider the Harish-Chandra series of cuspidal unipotent representations, then combining Theorem \ref{uni} we obtain the following immediate consequence.

\begin{corollary}\label{cor4.3}
Let $\lambda:=\lambda_k$ be the unique cuspidal unipotent representation of $Sp_{2n}$, $n=k(k+1)$, and $\lambda':=\lambda'^+_{k,\chi}$ be its theta lifting to $O^\epsilon_{2n+1}$. Then the following holds.

(i) For $\gamma \in \rm{Irr}(Sp_{2m})_{\lambda}$, $\Theta_{O^\epsilon_{2m'+1}}(\gamma)=0$ if $m'<m$ and
\[
\rm{Irr}\left(\Theta_{O^\epsilon_{2m'+1}}(\gamma)\right)\subset \rm{Irr}\left(R^{O_{2m'+1}}_{O_{2n+1}\times T_{m-n}\times T_{m'-m}}(\lambda'\otimes\theta_{m-n}\otimes 1)\right)
\]
otherwise.

(ii)  For $\gamma' \in \rm{Irr}(O^\epsilon_{2m'+1})_{\lambda',\theta}$, $\Theta_{Sp_{2m}}(\gamma')=0$ if $m<m'$ and
\[
\rm{Irr}\left(\Theta_{Sp_{2m}}(\gamma') \right)\subset \rm{Irr}\left(R^{Sp_{2m}}_{Sp_{2n}\times T_{m'-n}\times T_{m-m'}}(\lambda\otimes 1\otimes\theta_{m-m'})\right)
\]
otherwise.
\end{corollary}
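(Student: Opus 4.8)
The plan is to obtain the corollary by feeding Theorem \ref{thm3.7} into Theorem \ref{amr2} and supplementing with Theorem \ref{uni}; the only real content is matching up indices and unwinding the combinatorial definitions (\ref{tkl}) and (\ref{tkl'}). By Theorem \ref{thm3.7}, the first occurrence index $n'^\epsilon(\lambda_k)$ equals $n=k(k+1)$ and $\lambda'=\lambda'^+_{k,\chi}=\Theta_{O^\epsilon_{2n+1}}(\lambda_k)$ is an irreducible cuspidal $\theta$-representation of $O^\epsilon_{2n+1}$. Hence $\lambda=\lambda_k$, which is cuspidal, together with its first occurrence $\lambda'$ satisfies the hypotheses of Theorem \ref{amr2} with $n'$ there replaced by $n$, so parts (iii) and (iv) of that theorem are at our disposal.

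For (i), I would first note that any $\gamma\in\rm{Irr}(Sp_{2m})_{\lambda}$ lies in the Harish-Chandra series of the unipotent representation $\lambda_k$ and is therefore itself unipotent, since parabolic induction of unipotent representations stays in the geometric conjugacy class of the identity. Thus if $\gamma\otimes\gamma'$ occurs in $\omega^\epsilon_{m,m'}$, then, restricting $\gamma'$ to $SO^\epsilon_{2m'+1}$ and applying Theorem \ref{uni} (i), one gets $m'\geq m$; so $\Theta_{O^\epsilon_{2m'+1}}(\gamma)=0$ whenever $m'<m$. For $m'\geq m$ (in particular $m'\geq n$), Theorem \ref{amr2} (iii) gives
\[
\rm{Irr}\left(\Theta_{O^\epsilon_{2m'+1}}(\gamma)\right)\subset\rm{Irr}\left(R^{O_{2m'+1}}_{O_{2n+1}\times T_{m'-n}}(\lambda'\otimes\theta'_{m-n,m'-n})\right),
\]
and since $m-n\leq m'-n$ the definition (\ref{tkl'}) yields $\theta'_{m-n,m'-n}=\theta_{m-n}\otimes1_{m'-m}$. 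Identifying $T_{m'-n}=T_{m-n}\times T_{m'-m}$ and using transitivity of parabolic induction, the right-hand side becomes $\rm{Irr}(R^{O_{2m'+1}}_{O_{2n+1}\times T_{m-n}\times T_{m'-m}}(\lambda'\otimes\theta_{m-n}\otimes1))$, which is exactly (i). In the leftover range $n\leq m'<m$ the character (\ref{tkl'}) degenerates to the ``otherwise'' branch $\theta'_{m-n,m'-n}=\theta_{m'-n}$, but there $\Theta_{O^\epsilon_{2m'+1}}(\gamma)=0$ by the previous step, so no information is lost.

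For (ii) I would argue symmetrically. An element $\gamma'\in\rm{Irr}(O^\epsilon_{2m'+1})_{\lambda',\theta}$ restricts to a $\theta$-representation of $SO^\epsilon_{2m'+1}$: it occurs in $R^{O_{2m'+1}}_{O_{2n+1}\times T_{m'-n}}(\lambda'\otimes\theta_{m'-n})$ with $\lambda'|_{SO_{2n+1}}=\chi_{SO_{2n+1}}\lambda'_k$ a $\theta$-representation, and transitivity of Deligne-Lusztig induction puts its constituents in some $R^{SO_{2m'+1}}_{T_v,\theta_v}$. Hence if $\gamma\otimes\gamma'$ occurs in $\omega^\epsilon_{m,m'}$, Theorem \ref{uni} (ii) forces $m\geq m'$, so $\Theta_{Sp_{2m}}(\gamma')=0$ for $m<m'$. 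For $m\geq m'$, Theorem \ref{amr2} (iv) gives $\rm{Irr}(\Theta_{Sp_{2m}}(\gamma'))\subset\rm{Irr}(R^{Sp_{2m}}_{Sp_{2n}\times T_{m-n}}(\lambda\otimes\theta_{m'-n,m-n}))$, and since $m'-n\leq m-n$ the definition (\ref{tkl}) gives $\theta_{m'-n,m-n}=1_{m'-n}\otimes\theta_{m-m'}$; identifying $T_{m-n}=T_{m'-n}\times T_{m-m'}$ and again using transitivity of induction yields the asserted form.

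Since the argument is a translation of results already proved, there is no genuine obstacle; the two points that need a little care are (a) the passage between $O_{2m'+1}$ and $SO_{2m'+1}$ required to invoke Theorem \ref{uni}, which is phrased for special orthogonal groups, and (b) checking that the ``otherwise'' branches of (\ref{tkl}) and (\ref{tkl'}) are triggered only for index pairs at which the relevant theta lift already vanishes, so that the clean formulas in (i) and (ii) capture every nonzero lift. Both are routine once the bookkeeping is set up.
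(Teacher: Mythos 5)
Your proposal is correct and follows exactly the route the paper intends: the paper states the corollary as an immediate consequence of Theorem \ref{amr2} (iii)--(iv) combined with Theorem \ref{uni}, and your write-up supplies precisely those steps — identifying $n'=n$ and $\lambda'$ via Theorem \ref{thm3.7}, upgrading the vanishing range from $m'<n$ to $m'<m$ via Theorem \ref{uni}, and unwinding $\theta'_{m-n,m'-n}$ and $\theta_{m'-n,m-n}$ in the relevant range. The two points you flag (the $O$ versus $SO$ passage and the ``otherwise'' branches being vacuous where the lift already vanishes) are handled correctly.
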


We end this paper by some remarks on the open problem of finding the explicit theta correspondence of Harish-Chandra series, which can be reformulated as a correspondence of Weyl group representations. Explicit descriptions have been proved for unitary dual pairs
(\cite[Th\'eor\`eme 3.10]{AMR}) and  symplectic-even orthogonal dual pairs (\cite[Conjecture 3.11]{AMR}, proved recently in \cite{P3}). These results are also explicated recently in \cite{E} in order to pick up extremal components from the big theta lifting. We expect that a similar conjecture should hold for symplectic-odd orthogonal case as well according to the results in this section.

Let us give a few details. From Mackey theory  we know that irreducible constituents in the Harish-Chandra series of a cuspidal unipotent representation are parametrized by irreducible representations of a Weyl group. This observation leads to a connection with the Springer correspondence \cite{AKP, E}.
More precisely, let $\lambda_k$ be the unique cuspidal unipotent representation of a classical group $G_{n_k}$, where $n_k$ is explicated  in Theorem \ref{fun}. Namely one has the table
\[
\begin{tabular}{|l|l|l|}
\hline
$n_k$ &  Type of $G$ & Sign of $G$\\
\hline
$\lfloor k(k+1)/4 \rfloor$ & unitary & sgn$(-1)^{k(k+1)/2}$\\
$k(k+1)$ & symplectic & NA\\
$k(k+1)$ & odd orthogonal & $\pm$ \\
$k^2$ & even orthogonal & sgn$(-1)^k$ \\
\hline
\end{tabular}
\]
For $G_{n_k}\subset G_m$ in the same Witt tower,  the set $\rm{Irr}(G_m)_{\lambda_k}$ is parametrized by irreducible representations of
the group
\[
W_{G_m}(\lambda_{k}) = \left\{x \in N_{G_m}(L)/L \ | \ \lambda^{x}_{k} \cong \lambda_{k}\right\},
\]
where $L = G_{n_k}\times T_{m-n_k}$.
The uniqueness of $\lambda_{k}$ implies that the defining condition of $W_{G_m}(\lambda_k)$ is trivial, hence
\[
W_{G_m}(\lambda_{k})=N_{G_m}(L)/L\cong W_{m-n_k}
\]
 which is a Weyl group of type $B_{m-n_k}$.

By the work \cite{AM}, the theta correspondence takes $\lambda_k$ to $\lambda'_{k'}$, where $k'=k$ for symplectic-even orthogonal case and $k'=k\pm1$ for unitary case. Hence in these cases the theta correspondence between Harish-Chandra series of cuspidal unipotent representations can be characterized as a correspondence between representations of a pair of Weyl groups $(W_{m-n_k}, W_{m'-n_{k'}})$.
As we mentioned above, explicit correspondence of representations of such a pair $(W_l, W_{l'})$ has been proved for unitary dual pairs
and  symplectic-even orthogonal dual pairs in \cite{AMR} and  \cite{P3}, respectively.
The set $\rm{Irr}(W_{l})$ has a well-known parametrization by bipartitions of $l$ (see e.g. \cite[Theorem 5.5.6]{GP}).  In \cite{E} J. Epequin Chavez further explicates the above correspondence using bipartitions. Among other applications, an observation we make from the results in \cite{E} is that in contrast to the $p$-adic case \cite{SZ}, the conservation relations do not hold in general for noncuspidal representations of finite dual pairs. It should be very interesting to understand the conservation type relations for finite dual pairs beyond the cuspidal case.

For the symplectic-odd orthogonal case, recall from Theorem \ref{thm3.7} that  $\lambda_k$ is the unique cuspidal unipotent representation of $Sp_{2n_k}$, and $\lambda'^+_{k,\chi}$ is the unique cuspidal $\theta$-representation of $O_{2n_k+1}$ with trivial central character, $n_k=k(k+1)$.
In a similar manner as in the unipotent case, the Harish-Chandra series $\rm{Irr}(Sp_{2m})_{\lambda_k,\theta}$ and $\rm{Irr}(O_{2m'+1})_{\lambda'^+_{k,\chi}}$ are in bijection with $\rm{Irr}(W_{m-n_k})$ and $\rm{Irr}(W_{m'-n_k})$ respectively. By Theorem \ref{amr2} (i) (ii), we expect that a conjecture analogous to \cite[Conjecture 3.11]{AMR} should hold for the theta correspondence between $\rm{Irr}(Sp_{2m})_{\lambda_k,\theta}$ and $\rm{Irr}(O_{2m'+1})_{\lambda'^+_{k,\chi}}$.


\begin{thebibliography}{CERP}

\bibitem[AM]{AM}
J. Adams, A. Moy, {\it Unipotent representations and reductive dual pairs over finite fields}, Trans. Amer.
Math. Soc. {\bf 340} (1993) 309--321.

\bibitem[AKP]{AKP}
A.-M. Aubert, W. Kra\'skiewicz, T. Przebinda, {\it Howe correspondence and Springer correspondence for dual pairs over a finite field.} Lie algebras, Lie superalgebras, vertex algebras and related topics, 17--44, Proc. Sympos. Pure Math., {\bf 92}, Amer. Math. Soc., Providence, RI, 2016.

\bibitem[AMR]{AMR}
A.-M. Aubert, J. Michel, R. Rouquier, {\it Correspondance de Howe pour les groupes reductifs sur les corps finis}, Duke Math. J. {\bf83}, 2 (1996), 353--397.

\bibitem[C]{C}
R. Carter, {\it Finite Groups of Lie Type, Conjugacy Classes and Complex Characters}, John Wiley $\&$ Sons, England, 1985.

\bibitem[DL]{DL}
P. Deligne, G. Lusztig, {\it Representations of reductive groups over finite fields}, Ann. of Math. {\bf 103} (1976), 103--161.

\bibitem[E]{E}
J. Epequin Chavez, {\it Extremal unipotent representations for the finite Howe correspondence}, J.
Algebra {\bf 535} (2019), 480--502.

\bibitem[GP]{GP}
M. Geck, G. Pfeiffer, {\it Characters of Finite Coxeter Groups and Iwahori-Hecke Algebras}, London Mathematical Society Monographs No. {\bf 21}, Clarendon Press, 2000.

\bibitem[G]{G}
P. G\'erardin, {\it Weil representations associated to finite fields}, J. Algebra {\bf 46} (1977), 54--101.

\bibitem[GH]{GH}

S. Gurevich, R. Howe, {\it Small representations of finite classical groups}, arXiv:1609.01276, to appear in the  proceedings of the conference on the occasion of Roger Howe's 70th birthday.

\bibitem[H]{H}

R. Howe, {\it Invariant theory and duality for classical groups over finite fields with applications to their
singular representation theory}, preprint.

\bibitem[L1]{L1}
G. Lusztig, {\it Irreducible representations of finite classical groups}, Invent. Math. {\bf43} (1977), 125--175.

\bibitem[L2]{L2}
\bysame, {\it Characters of reductive groups over a finite field}, Princeton Univ. Press, Princeton, N.J., 1984.

\bibitem[L3]{L3}
\bysame, {\it On the representations of reductive groups with disconnected centre,} in: Orbites unipotentes et
repr\'esentations I. Groupes finis et alg\`ebres de Hecke, Ast\'erisque {\bf 168}, Soci\'et\'e Math\'ematique de France, Paris
(1988), 157--166.

\bibitem[LS]{LS}
G. Lusztig, B. Srinivasan, {\it The characters of the finite unitary groups}, J. Algebra {\bf 49} (1977), 167-171.

\bibitem[Ma]{Ma}
G. Malle, {\it Cuspidal characters and automorphisms,} Adv. Math. {\bf 320} (2017), 887--903.

\bibitem[M]{M}

C. M{\oe}glin, {\it Quadratic unipotent representations of $p$-adic groups.} Functional analysis on the eve of the 21st century, Vol. 1 (New Brunswick, NJ, 1993), 245--262, Progr. Math., 131, Birkhauser Boston, Boston, MA, 1995.

\bibitem[MVW]{MVW}

C. M{\oe}glin, M.-F. Vign\'eras, J.-L. Waldspurger, {\it Correspondances de Howe sur un corps $p$-adique},
Lecture Notes in Math. vol. {\bf 1291}, Springer-Verlag, 1987.

\bibitem[P1]{P1}

S.-Y. Pan, {\it Local theta correspondence of depth zero representations and theta dichotomy}, J. Math. Soc. Japan, Vol. {\bf54}, No. 4, (2002) 794--845.

\bibitem[P2]{P2}

\bysame, {\it Weil representations of finite symplectic groups and
finite odd-dimensional orthogonal groups}, Journal of Algebra {\bf453} (2016), 291--324.

\bibitem[P3]{P3}
\bysame, {\it Howe correspondence of unipotent characters for a finite symplectic/even-orthogonal dual pair}, arXiv:1901.00623.

\bibitem[P4]{P4}
\bysame, {\it Lusztig correspondence and Howe correspondence for nite reductive dual pairs}, arXiv:1906.01158.

\bibitem[S1]{S1}
B. Srinivasan, {\it The characters of the finite symplectic group $Sp(4,q)$}, Trans. Amer. Math. Soc. {\bf131}
(1968), 488--525.

\bibitem[S2]{S2}
\bysame, {\it Weil representations of finite classical groups}, Invent. Math. {\bf51 } (1979), 143--153.

\bibitem[S3]{S3}
\bysame, {\it Representations of finite Chevalley groups}, Lecture Notes in Math., vol. {\bf 764}, Springer-Verlag, 1979.

\bibitem[SZ]{SZ}
B. Sun, C. Zhu, {\it Conservation relations for local theta correspondence,} J. Amer. Math. Soc. {\bf 28} (2015), no. 4, 939--983.

\end{thebibliography}
\end{document}